\DeclareMathOperator{\End}{End}
\DeclareMathOperator{\Lie}{Lie}
\DeclareMathOperator{\Ric}{Ric}
\DeclareMathOperator{\Ham}{Ham}
\DeclareMathOperator{\eq}{eq}
\DeclareMathOperator{\Int}{Int}
\DeclareMathOperator{\Fut}{Fut}
\newcommand{\C}{\mathbb{C}}
\newcommand{\R}{\mathbb{R}}
\newcommand{\ddb}{i\partial \bar\partial}
\newcommand{\mfk}{\mathfrak{k}}
\newcommand{\fham}{\mathfrak{ham}}
\numberwithin{equation}{section}
\newcommand{\cuD}{\mathcal{D}}
\newcommand{\cF}{\mathcal{F}}
\newcommand{\mfh}{\mathfrak{h}}
\newtheorem{theorem}{Theorem}[section]
\newtheorem{lemma}[theorem]{Lemma}
\newtheorem{proposition}[theorem]{Proposition}
\newtheorem{corollary}[theorem]{Corollary}
\theoremstyle{definition}
\newtheorem{example}[theorem]{Example}
\newtheorem{remark}[theorem]{Remark}
\newtheorem{definition}[theorem]{Definition}
\begin{document}
\title{Twisted scalar curvature as a moment map}

\author{R.\ Dervan}
\address[R. Dervan]{Mathematics Institute, University of Warwick, Coventry CV4 7AL, United Kingdom.}
\email{ruadhai.dervan@warwick.ac.uk}
\urladdr{https://www.maths.gla.ac.uk/~rdervan/}
\author{ T.\ Murphy}
\address[T. Murphy]{Department of Mathematics, California State University Fullerton, 800 N. State College Blvd., Fullerton 92831 CA, USA.}
\email{tmurphy@fullerton.edu}
\urladdr{http://www.fullerton.edu/math/faculty/tmurphy/}
\author{J.\ Ross}
\address[J. Ross]{Department of Mathematics, Statistics, and Computer Science, University of Illinois at Chicago, 322 Science and Engineering Offices (M/C 249), 851 S. Morgan Street, Chicago, IL 60607.}
\email{juliusro@uic.edu}
\urladdr{https://sites.google.com/uic.edu/juliusross/home}
\author{L.\ M.\  Sektnan}
\address[L.M. Sektnan]{Department of Mathematical Sciences, University of Gothenburg, 412 96 Gothenburg, Sweden.}
\email{sektnan@chalmers.se}
\urladdr{https://sites.google.com/cirget.ca/lars-sektnan/}
\author{X.\ Wang}
\address[X. Wang]{Department of Mathematics and Computer Science, Rutgers University at Newark, NJ 07102, USA.}
\email{xiaowwan@newark.rutgers.edu}
\urladdr{https://sites.rutgers.edu/xiaowei-wang/}

\begin{abstract} 

We develop the moment map theory of the twisted scalar curvature of a K\"ahler metric. Primarily, we introduce a coupled system of equations on a holomorphic submersion intertwining the twisted scalar curvature of a K\"ahler metric on the base and the fibrewise scalar curvature of a relatively K\"ahler metric on the total space. This resulting system can be viewed as producing the natural coupled metric geometry of holomorphic submersions, and we show that this system appears canonically as a moment map. The approach generalises to  foliations, where we prove similar results.

\end{abstract}

\maketitle
\section{Introduction}

A foundational idea in complex geometry is that geometric PDEs arise in a canonical manner in the study of given geometric structures. The most prominent structure where this takes place is that of complex manifolds themselves, where the natural geometric PDE asks for the scalar curvature of a K\"ahler metric to be constant (producing a \emph{cscK} metric). The search for cscK metrics is then connected with stability conditions in algebraic geometry through the Yau--Tian--Donaldson conjecture, and is further related to the moduli theory of complex manifolds.  In this work, we develop an analogous framework in three related complex-geometric settings:   maps between complex manifolds,  submersions, and  foliations. In each of these three settings the twisted scalar curvature plays a crucial role.  This augments the usual scalar curvature $S(\omega)$ of a K\"ahler metric $\omega$ by subtracting the contraction $\Lambda_{\omega}\alpha$ of an auxiliary closed $(1,1)$-form $\alpha$; the form $\alpha$ is typically  assumed to possess positivity properties related to the geometry of the setup. 

Beginning in the simplest case,  namely that of maps between complex manifolds, we demonstrate that the twisted scalar curvature arises as the natural geometric PDE associated to a K\"ahler metric $\omega$ on the source manifold, where the twist $\alpha$ is induced by pulling back the K\"ahler metric from the target (so in particular $\alpha$ is semipositive). The connection between the setting of maps and the settings of submersions is that given a holomorphic submersion, one obtains a map from the base to the moduli space parametrising the fibres, and this moduli space admits a Weil--Petersson form. In this second setting, however, we demonstrate that the natural metric geometry in fact involves a novel \emph{coupled system} involving both the fibrewise scalar curvature and the twisted scalar curvature, where the twist is given by the Weil--Petersson form. The third setting, namely foliations, is a generalisation of the second, as a holomorphic submersion induces in a natural way a holomorphic foliation which further has compact leaves. Here again a natural coupled system intertwining  leafwise and transverse geometry is derived, governing the metric geometry of foliations.

The way in which the scalar curvature is derived as the natural geometric PDE associated to a K\"ahler metric on a compact complex manifold is through the classical work of  Donaldson \cite{SD-moment} and Fujiki \cite{AK-moment}, where the scalar curvature is viewed as a moment map on the space of almost complex structures on a fixed compact symplectic manifold. The present paper is thus devoted to the construction of moment maps in our three settings, namely maps, submersions and foliations. We take two approaches to this; first in an infinite-dimensional framework modeled on the Donaldson--Fujiki picture, and secondly in a finite-dimensional framework  modeled on the more recent work of the first author  and Hallam \cite{dervan2024universalstructuremomentmaps}.

Most of our work is in the setting  of submersions, and so we next explain that setting in some detail.   Consider a symplectic submersion  $$\pi: (M,\omega_M) \to (B,\omega_B)$$ between compact manifolds of dimensions $2m+2n$ and $2n$ respectively, so that $\omega_M$ is a closed $2$-form on $M$  which is symplectic on each fibre of $\pi$, and $\omega_B$ is a symplectic form on $B$. Let $\mathscr J^{\Int}_{\pi}$ be the space of pairs of integrable almost complex structures $(J_M,J_B)$ on $M$ and $B$ respectively, compatible with the various structures and making  $\pi$ holomorphic. The space $\mathscr J^{\Int}_{\pi}$  admits a semipositive closed $(1,1)$-form $\Omega$ on $\mathscr J^{\Int}_{\pi}$ defined through an $L^2$-pairing in the spirit of Donaldson--Fujiki.

The space $\mathscr J^{\Int}_{\pi}$ further admits a natural action of a symplectic gauge group $\mathcal G$, given by the diffeomorphisms of $M$ and $B$ preserving the respective forms, in a Hamiltonian fashion  compatible with the map $\pi$.  Since our setup assumes the action of $\mathcal G$ is Hamiltonian, we obtain tautological moment maps $\mu_M: M \to \Lie\mathcal G^*$ and $\mu_B: B \to \Lie\mathcal G^*$. We note here that we do not assume that $\omega_M$ is globally nondegenerate, but merely fibrewise nondegenerate; throughout  this work nondegeneracy is not assumed  in the definition of a moment map. The semipositive form $\Omega$ on $\mathscr J^{\Int}_{\pi}$ is then also $\mathcal G$-invariant, and so it makes sense to ask for a moment map.

\begin{theorem}\label{intromainthm}A moment map $\mu: \mathscr J^{\Int}_{\pi} \to \Lie \mathcal G^*$ for the $\mathcal G$-action on $(\mathscr J_{\pi},\Omega)$ is given by 
\begin{align*}\langle \mu, v \rangle(J_M,J_B) =& \int_M \langle \mu_M, v \rangle (S_{V}(\omega_M,J_M) - \hat S_{V})\omega_M^m\wedge \omega_B^n\\
&+ \int_B \langle \mu_B,v\rangle (S(\omega_B,J_B) - \Lambda_{\omega_B} \alpha_{\pi} - \hat S_{\pi})\omega_B^n.\end{align*}
\end{theorem}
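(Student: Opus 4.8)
The plan is to verify the defining moment map identity directly. Writing $\rho(v)$ for the vector field on $\mathscr J^{\Int}_\pi$ generated by $v \in \Lie\mathcal G$, I must show that
\[
d\langle \mu, v\rangle(Y) = \Omega(\rho(v), Y)
\]
for every tangent vector $Y = (\dot J_M, \dot J_B)$ at a point $(J_M, J_B)$, up to the usual sign convention. The key simplification is that the symplectic forms $\omega_M, \omega_B$ are held fixed and the tautological moment maps $\mu_M, \mu_B$ depend only on the symplectic structures and the $\mathcal G$-action. Consequently the volume forms $\omega_M^m \wedge \omega_B^n$ and $\omega_B^n$, together with the Hamiltonians $\langle \mu_M, v\rangle$ and $\langle \mu_B, v\rangle$, are all independent of $(J_M, J_B)$. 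Differentiating $\langle \mu, v\rangle$ along a path through $(J_M, J_B)$ therefore only differentiates the three curvature quantities $S_V$, $S(\omega_B, J_B)$ and the twist $\Lambda_{\omega_B}\alpha_\pi$, and the whole problem reduces to identifying their linearisations with the appropriate pieces of $\Omega(\rho(v), Y)$.

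First I would dispose of the base contribution. The base scalar curvature $S(\omega_B, J_B)$ depends only on $J_B$, so its linearisation is driven by $\dot J_B$ alone. Paired against $\langle \mu_B, v\rangle$ and integrated over $B$, it is handled by the classical Donaldson--Fujiki computation on $(B, \omega_B)$ \cite{SD-moment,AK-moment}: using the formal self-adjointness of the Lichnerowicz operator one rewrites this as an $L^2$-pairing of $\dot J_B$ with the infinitesimal action $\mathcal L_{X_{\langle \mu_B, v\rangle}} J_B$, which is exactly the base component of $\Omega(\rho(v), Y)$. This accounts for the untwisted part of the second integral.

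The fibrewise term I would treat by integrating first over the fibres. Restricting to a fibre $X_b = \pi^{-1}(b)$ equipped with the induced symplectic form, the contribution from $\dot S_V$ is governed by the fibrewise linearisation of scalar curvature, and the restriction of $\langle \mu_M, v\rangle$ to $X_b$ is precisely the fibrewise Hamiltonian of the vertical part of the generating vector field. Applying the Donaldson--Fujiki identity on each fibre and then integrating over $B$ against $\omega_B^n$ produces the fibrewise component of $\Omega$, namely the $L^2$-pairing of the vertical part of $\dot J_M$ with the fibrewise infinitesimal action.

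The remaining step, which I expect to be the main obstacle, is to account for the twist and thereby capture the genuine coupling between base and fibre. Varying $J_M$ affects not only the fibrewise integral through $\dot S_V$ but also the base integral through $\tfrac{d}{dt}\Lambda_{\omega_B}\alpha_\pi$, since $\alpha_\pi$ is the \emph{Weil--Petersson form} assembled from the fibre geometry; varying $J_B$ moves $\alpha_\pi$ through its dependence on the horizontal distribution. The difficulty is to establish the precise variational formula for $\alpha_\pi$ and to show that the resulting cross-term is exactly what is needed to complete the assembly. I expect this to follow from the fibrewise moment map picture itself: since the fibrewise scalar curvature is a moment map for the fibrewise gauge action, the form $\alpha_\pi$ is the induced symplectic form on the family of fibres, and its variation is controlled by the same fibrewise $L^2$-pairing that appeared above. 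Once this identity is in place, the twist term combines with the leftover fibrewise and base contributions so that the total derivative equals $\Omega(\rho(v), Y)$, establishing that $\mu$ is a moment map. Throughout, only the fibrewise nondegeneracy of $\omega_M$ is used, consistent with the paper's convention that moment maps do not require nondegeneracy.
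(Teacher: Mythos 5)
Your outline is a direct verification of the identity $d\langle\mu,v\rangle = \iota_{\rho(v)}\Omega$, which is a genuinely different route from the paper's, but it has a gap at precisely the point you flag as ``the main obstacle'' --- and that point is the entire content of the theorem. The paper emphasises that the coupled expression is \emph{not} a sum of moment maps: the base integral varies in the $\dot J_M$ directions through the Weil--Petersson form $\alpha_\pi$ (via the curvature $\rho$ of the relative anticanonical bundle), and what must be proved is that this cross-variation, paired with $\langle\mu_B,v\rangle$, exactly compensates what is left over after the fibrewise and base Donaldson--Fujiki identities. Your justification --- that $\alpha_\pi$ ``is the induced symplectic form on the family of fibres, and its variation is controlled by the same fibrewise $L^2$-pairing'' --- is an assertion, not an argument. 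The actual computation requires the linearisation of $\rho$, integration by parts against its horizontal and mixed components $\rho_H$, $\rho_{mix}$, and repeated type-decomposition cancellations; in the paper this kind of analysis occupies Propositions \ref{prop:lin} and \ref{prop:imP}, and even matching the $\Lambda_{\omega_B}\alpha_{\pi}$ form of the statement with the curvature quantities that naturally arise requires the computation in Proposition \ref{lem:samefut}. There is no reason these collapse to the fibrewise pairing you cite. Two further points: (i) $\Omega$ is degenerate on the mixed $\mathrm{Hom}(V,H)$-components of $\dot J_M$, so your identity forces $d\langle\mu,v\rangle$ to vanish in those directions; since $\rho$ a priori depends on the full complex structure $J_M$, this vanishing needs proof (it is the content of the quotient-space discussion in Section \ref{sec:formal}) and your sketch never addresses it; (ii) your claim that varying $J_B$ moves $\alpha_\pi$ ``through its dependence on the horizontal distribution'' is incorrect --- $H$ is the $\omega_M$-orthogonal complement of the vertical bundle and $\omega_M$ is fixed, and $\alpha_\pi$ as defined does not depend on $J_B$ at all.

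For contrast, the paper sidesteps all of these direct computations by an adiabatic argument: $\mathscr J^{\Int}_\pi$ is embedded $\mathrm{Ham}(\pi)$-equivariantly and holomorphically into $\mathscr J^{\Int}(M,\omega_M+k\pi^*\omega_B)$ for $k\gg 0$ (Lemma \ref{lem:embeddings}), where the Donaldson--Fujiki moment map is already known; expanding the pulled-back K\"ahler form and moment map in powers of $k$ (using Fine's expansion of $S(\omega_k)$, the splitting $h_k=h_M+kh_B$, and the vanishing of the would-be leading term), the moment map identity holds for every $k$ and hence coefficient-by-coefficient, and the $k^n$-coefficients are exactly $\Omega_{\mathscr J_\pi}$ and the coupled expression of the theorem. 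The coupling is thus inherited from the known moment map upstairs rather than verified by hand. As written, your proposal establishes only the two decoupled contributions, which by themselves do not satisfy the moment map identity; to complete your route you would need the variational formula for $\alpha_\pi$ in the $\dot J_M$ directions together with a cancellation scheme of the type carried out in Propositions \ref{prop:lin}--\ref{prop:imP}.
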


To unwind the notation used above, $J_M$ induces an almost complex structure on each fibre  $(M_b,\omega_b) = \pi^{-1}(b),$ $S_{V}(\omega_M,J_M)$ is the vertical (or fibrewise) scalar curvature of the resulting relatively K\"ahler metric, and $\hat{S}_{V}$ its average.  The family of K\"ahler manifolds
$\pi: (M,\omega_M,J_M) \to (B,J_B)$ induces a Weil--Petersson form on $B$ denoted by $\alpha_{\pi}$, with corresponding average twisted scalar curvature $\hat{S}_{\pi}$.  

 As mentioned above, we also prove a version of this result for a finite-dimensional family of holomorphic submersions (see Theorem \ref{thm:findimmomentmap}). Our proofs in the finite-dimensional and infinite-dimensional settings differ considerably: the former involves more direct tools from equivariant differential geometry, while the latter involves a more indirect adiabatic limit (or asymptotic expansion) argument, which is tightly tied to the geometry of submersions.

In both approaches we obtain a \emph{coupled} system of equations, intertwining in a natural way the fibrwise and basic geometries of the submersion. It is important to observe that this is \emph{not} in a natural way a sum of moment maps, and both terms are crucial in obtaining the moment map property. Zeroes of the moment map naturally correspond to fibrewise cscK metrics, such that the resulting twisted scalar curvature on $B$ is orthogonal to a naturally induced  function space (c.f.\ Remark \ref{interpretationzerosubmersion}).  The form $\Omega$ is merely \emph{semi}-positive; it becomes strictly positive on a natural quotient of $\mathscr J^{\Int}_{\pi}$ where essentially ignore the mixed term in $J_M$ (as we explain in Section \ref{sec:formal}). 

We mention that there is a strong parallel between the theory we develop and the coupled K\"ahler Yang--Mills equations of \cite{coupled}: their geometric setting is a holomorphic vector bundle on a compact K\"ahler manifold, and their moment map setup induces a coupled system of equations intertwining the geometry of the bundle and the geometry of the base. Our system shares many of the features established in their setting, intertwining instead the relative geometry of a submersion with the geometry of the base.

Our primary motivation for the perspective we take comes from the geometric role the twisted scalar curvature plays in certain aspects of complex geometry. This arises most notably through work of Fine \cite{finecsck} (where it relates to cscK metrics on the total space of submersions and which is where the twisted scalar curvature first arose), Song--Tian \cite{songTianKRF} (in the analytic approach to the minimal model programme through the K\"ahler--Ricci flow), and Gross--Wilson \cite{Gross_Largecomplexstructures} (where it arises on the base of Calabi--Yau fibrations; see e.g.\ Tosatti \cite{Tosatti_Adiabatic} for further developments). In each of these settings, the twisted scalar curvature appears on the base of submersions where the fibres admit cscK metrics (though singularities are typically present in the latter two).

A moment map interpretation of the twisted scalar curvature was presented, motivated in part by these results, by Stoppa \cite{stoppatwisted}, essentially by summing the moment map interpretations for the scalar curvature and the ``twisting'' term $\Lambda_{\omega}\alpha$. This work motivates connections between twisted scalar curvature and algebro-geometric stability conditions, and as part of this work, Stoppa produced the first algebro-geometric obstruction to the existence of such metrics. From the geometric perspective taken here, the interpretation given by Stoppa appears less related to the role the twisted scalar curvature has played in the theory of submersions, which is our primary motivation. We note in addition that in the work of Stoppa one does not see a coupled system as the natural moment map. In particular, our approach geometrises the appearance of the twisted scalar curvature within the theory of submersions,  and in particular its role in connection to the geometry of the base of families. This appearance has traditionally been viewed as arising from the resulting map to the moduli space of cscK manifolds from the base of a submersion, and our work explains this from a moment map perspective. 

A related motivation for our work is the understanding of the global geometry of the K\"ahler manifold $(M,J_M,\omega_M+k\omega_B)$ for $k\gg 0$, which is the ``adiabatic limit'' \cite{finecsck, Hong_ruled}. Here,  from an analytic perspective, the geometry splits into three pieces: the fibrewise geometry, the base geometry of the map from the base to the moduli space parametrising the fibres, and the global ``twisting'' geometry of the total space. This third geometry corresponds to the theory of optimal symplectic connections \cite{dervansektnan_osc, ortu_optimal}, generalising the Hermite--Einstein condition when $M=\mathbb P(E)$ is the projectivisation of a vector bundle, and a satisfactory moment map interpretation for this equation has been provided by Ortu \cite{ortu_mmap} extending the classical Atiyah--Bott picture. 

What our work suggests is that one cannot decouple the remaining two geometric aspects, and that to obtain a satisfactory moment map interpretation one must actually consider a coupled system intertwining the fibrewise and basic geometry. This necessity is closely related to algebro-geometric counterparts of these various conditions, where the basic analogue of the Yau--Tian--Donaldson conjecture involves a notion of algebro-geometric stability for the map from the base to the moduli space \cite{dervan_ross_stablemaps}.  It is also closely related to an observation of the first and third authors \cite[Remark 4.7]{dervan_ross_stablemaps} in which it is more natural to consider this moduli space as a moduli stack, which can loosely be viewed as a corresponding algebro-geometric coupling.

Moment map interpretations have many applications, as they suggest deep links with algebraic geometry and further to moduli theory. In a more practical direction, moment map results are closely related to (and imply) the existence of Lie algebra characters on a fixed structure (so for us, on a fixed holomorphic submersion), in the same manner as the classical Futaki invariant associated to holomorphic vector fields on a compact K\"ahler manifold \cite{Futaki_anobstruction}. We construct an analogous coupled invariant, associated to a holomorphic vector field on $X$, which we call the transverse Futaki invariant. This invariant is defined relative to a choice of K\"ahler metric on the base; we prove:

\begin{theorem}
\label{thm:futakiinvIntro}
The transverse Futaki invariant only depends on the cohomology class of the K\"ahler metric on the base.
\end{theorem}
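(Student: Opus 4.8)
The plan is to prove the statement by a direct first-variation computation: fixing the K\"ahler class $[\omega_B]$ on the base, I would show that the derivative of the transverse Futaki invariant vanishes along any path $\omega_{B,t} = \omega_B + t\,\ddb\psi$ of K\"ahler metrics within the class, where $\psi \in C^\infty(B,\R)$ is arbitrary. By the $\partial\bar\partial$-lemma every variation within $[\omega_B]$ is of this form, so vanishing of $\tfrac{d}{dt}\big|_{t=0}$ for all such $\psi$ yields the result. Concretely, the invariant is the pairing of the holomorphy potential of $v$ against the coupled twisted scalar curvature of Theorem~\ref{intromainthm}, i.e.\ a sum of the fibrewise term $\int_M h^M_v (S_V(\omega_M) - \hat S_V)\,\omega_M^m\wedge\omega_B^n$ and the basic term $\int_B h^B_v (S(\omega_B) - \Lambda_{\omega_B}\alpha_\pi - \hat S_\pi)\,\omega_B^n$, where $h^M_v, h^B_v$ are the fibrewise and basic holomorphy potentials of $v$.

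Before computing I would dispose of the constants. The averages $\hat S_V$ and $\hat S_\pi$ are topological: $\hat S_\pi$ is a ratio of the intersection numbers $c_1(B)\cdot[\omega_B]^{n-1}$, $[\alpha_\pi]\cdot[\omega_B]^{n-1}$ and $[\omega_B]^n$, hence constant along the path, and the Weil--Petersson form $\alpha_\pi$ is determined by the fibrewise cscK geometry encoded in $\omega_M$, so its relevant representative is independent of $\omega_B$. This isolates the genuinely metric-dependent data to $S(\omega_B)$, the contraction $\Lambda_{\omega_B}\alpha_\pi$, the potentials $h_v$, and the volume forms $\omega_B^n$.

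The core of the base computation is standard for Futaki-type invariants. I would use the linearisations $\tfrac{d}{dt}\big(S(\omega_{B,t}) - \Lambda_{\omega_{B,t}}\alpha_\pi\big) = -\mathcal{D}^*\mathcal{D}\psi + (\text{first order})$, where $\mathcal{D}^*\mathcal{D}$ is the twisted Lichnerowicz operator with the twist $\alpha_\pi$ absorbed into the lower-order terms, whose kernel is exactly the holomorphy potentials; $\tfrac{d}{dt}\omega_{B,t}^n = (\Lambda_{\omega_B}\ddb\psi)\,\omega_B^n$; and $\tfrac{d}{dt}h^B_{v,t} = v(\psi)$ up to a normalising constant. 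The leading contribution $\int_B h^B_v(-\mathcal{D}^*\mathcal{D}\psi)\,\omega_B^n$ is then moved by self-adjointness to $\int_B(-\mathcal{D}^*\mathcal{D}h^B_v)\,\psi\,\omega_B^n = 0$, using $\mathcal{D}h^B_v = 0$, i.e.\ holomorphy of the induced vector field $\pi_*v$ on $B$; the surviving transport and volume terms cancel against $\tfrac{d}{dt}h^B_v$ by the usual Futaki identity.

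The main obstacle, and the genuinely new feature here, is the coupling with the fibrewise geometry. The fibrewise term depends on $\omega_B$ only through the factor $\omega_B^n$, so its variation is $n\int_M h^M_v(S_V - \hat S_V)\,\omega_M^m\wedge\ddb\psi\wedge\omega_B^{n-1}$; after integrating along the fibres this becomes a base integral which does not vanish on its own and must cancel precisely against the residual volume and transport terms produced by the base part. Establishing this cancellation --- showing that the base and fibre variations assemble into a single integration-by-parts-exact expression annihilated by $\mathcal{D}h_v = 0$ together with the closedness of $\alpha_\pi$ and the compatibility of $v$ with $\pi$ --- is where I expect the bulk of the work to lie, and where the coupled structure is indispensable. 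Conceptually, the vanishing is guaranteed by the moment map picture of Theorem~\ref{intromainthm}: $\langle\mu,v\rangle$ is constant along the orbit of the complexified gauge action corresponding to varying the base K\"ahler potential, precisely because $v$ continues to generate a symmetry of the fixed complex structure along that orbit, so the concrete computation is really a verification that this abstract mechanism holds term by term.
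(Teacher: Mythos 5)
Your skeleton is the same as the paper's direct proof of Theorem \ref{thm:futaki-body}: vary the base/transverse metric along $t\ddb\psi$, observe that the averages are cohomological and that the holomorphy potential changes by $tv(\psi)$ (Lemma \ref{lem:changeinholpot}), kill the honest Lichnerowicz part of the base variation by self-adjointness, and then cancel the remainder against the fibrewise contribution. But the step you defer --- ``establishing this cancellation \ldots is where I expect the bulk of the work to lie'' --- is not routine bookkeeping; it is the proof, and it is missing. What closes the argument in the paper is a specific mechanism that your proposal never identifies. First, using $d\rho = 0$, Stokes, and repeated vertical/horizontal type decompositions (in the style of Lemma \ref{h-0}), the total variation of the fibrewise scalar-curvature terms collapses to the single expression $-n\int_X \rho_{mix}\wedge\iota_{v}\omega^{m}\wedge i\partial\psi\wedge\beta^{n-1}$, involving the \emph{mixed} component of the relative Ricci form --- an object absent from your sketch. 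Second, Proposition \ref{prop:imP} shows that the pairing $\int_X P(\psi)\,h\,\omega^m\wedge\beta^n$ of the twisted linearised operator against the transverse holomorphy potential does \emph{not} vanish (the twist contributes first-order terms that survive self-adjointness and $\cuD h = 0$), but equals exactly the negative of that mixed-Ricci term; its proof rests on the identity $\iota_{v}\rho = d\Delta_{\cF}(h_{\cF})$, which is the precise form in which ``compatibility of $v$ with $\pi$'' enters. Without these two computations the fibre and base variations are never brought into comparable form, so no cancellation can be exhibited. A further wrinkle you gloss over: in your formulation the twist $\Lambda_{\omega_B}\alpha_{\pi}$ must itself be unpacked, as in Proposition \ref{lem:samefut}, into a horizontal Ricci contraction plus a fibrewise scalar-curvature term before any type comparison is possible.

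The fallback appeal to Theorem \ref{intromainthm} does not repair the gap. Invariance of $\langle\mu,v\rangle$ under a change of base K\"ahler potential is invariance along a \emph{complexified} $\Ham(\pi)$-orbit; no complexified group exists in this infinite-dimensional setting, $\Omega_{\mathscr J_\pi}$ is only semipositive, and converting the path $\omega_B + t\ddb\psi$ (fixed complex structure, varying form) into a path in $\mathscr J^{\Int}_\pi$ (fixed forms, varying complex structure) along which the moment map identity can be integrated is itself a nontrivial argument you do not supply. Moreover the paper's logic runs in the opposite direction: the adiabatic expansion \eqref{Fut-expan} developed for the Futaki invariant is an ingredient in the proof of Theorem \ref{thm:cplxstrmomentmap}, which is why the paper insists on a direct argument here. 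Finally, your argument is confined to submersions, whereas the theorem is proved in the paper for arbitrary holomorphic foliations, where there is no base $B$ and no fibre integration, and where even the preliminary steps (e.g.\ Lemma \ref{h-0}) require the transverse Laplacian and a transverse Hodge decomposition in place of Fubini-type arguments.
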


As well as being a formal consequence of our moment map results, we also give a direct argument, analogously to the original work of Futaki. We do so for two reasons: firstly, this result will motivate some of the basic ideas of our moment map results, particularly in our adiabatic arguments; secondly, the concrete calculations demonstrate some of the basic analyic features of our coupled system, such as its linearisation, that we expect to play a foundational role in the surrounding theory.

In the final part of this work, we turn to the problem of extending our results to a more general setting. Rather than a submersion $\pi: (M,\omega_M) \to (B,\omega_B)$,  we consider a foliation $\mathcal F\subset TM$ along with a leafwise symplectic form $\omega_M$ and a transverse symplectic form $\omega_B$. After defining a natural analogues of the space of integrable almost complex structure, we prove an analogous claim to Theorem \ref{intromainthm}, namely that a coupled system involving both the leafwise and transverse scalar curvature appears as the natural moment map; Theorem \ref{thm:futakiinvIntro} is similarly proven in the generality of foliations.

\subsection*{Organisation of the paper} We begin in Section \ref{sec:prelims} with background material concerning equivariant differential geometry, and review how to view the scalar curvature as a moment map. We consider two frameworks, the first through ideas from equivariant differential geometry (where what we explain is primarily finite dimensional), and the second from the classical Donaldson--Fujiki approach (which is infinite dimensional).  In Section \ref{sec:maps}, we prove variants of these moment map results for maps between complex manifolds, both in the finite-dimensional and infinite-dimensional frameworks; these are relatively straightforward applications of ideas from equivariant differential geometry. Section \ref{sec:findimsubm} relates to the geometry of submersions, and proves the finite-dimensional version of Theorem \ref{intromainthm}. The proofs here use the equivariant differential geometry approach, and in Section \ref{tfsection} we turn to the adiabatic limit approach to derive the analogue of the Futaki invariant. This latter approach is used in Section \ref{sec:subinfdim} to prove Theorem \ref{intromainthm} in its general, infinite-dimensional form. Lastly, in Section \ref{sec:foliations}, we extend our results to the setting of foliations.

\subsection*{Acknowledgements: } This work was conducted through support of the American Institute of Mathematics (AIM) under the SQuaREs program.  We are particularly grateful to AIM and their staff for this support and excellent working environment. RD thanks Calum Spicer for several helpful discussions around the birational geometry of foliations.  RD was funded by a Royal Society University Research Fellowship (URF\textbackslash R1\textbackslash 201041), and JR is partially supported by the National Science Foundation (DMS 1749447). LMS is funded by a Starting Grant from the Swedish Research Council (grant 2022-04574). XW research was partially supported by Simons Foundation Grant 631318 and MPS-TSM-00006636.

\section{Preliminary Material}\label{sec:prelims}
\subsection{Equivariant differential geometry}\label{sec:equivariantdiff}
We recall the basic ideas of equivariant differential geometry, and refer to \cite[Chapter 7]{BGV} for further details. Throughout, we let $K$ be a compact Lie group acting on a manifold $M$, and denote by $\mathfrak k$ the Lie algebra of $K$. 

\begin{definition} An \emph{equivariant differential form} is a $K$-equivariant polynomial map $$\beta: \mfk \to \Omega^*(M),$$ where $K$ acts on $\mathfrak k$ by the adjoint action and by pullback on differential forms.\end{definition}

As with differential forms, there is a natural exterior derivative on equivariant differential forms. In the following, as is standard, we identify an element $v\in\mfk$ with its induced vector field on $M$.

\begin{definition}
    We define the \emph{equivariant differential} $d_{\eq}\beta$ of an equivariant differential form $\beta$ by $$(d_{\eq}\beta)(v) = \iota_v(\beta(v)) + d(\beta(v)),$$ and say that $\beta$ is \emph{equivariantly closed} if $d_{\eq}\beta$ vanishes.
\end{definition}

Suppose $\omega$ is a closed two-form on $M$ and that $K$ acts on $M$ preserving $\omega$. 

\begin{definition} A $K$-equivariant smooth map $\mu: M\to \mathfrak k^* $ is a called a \emph{moment map} for the $K$-action on $(M,\omega)$ if the equivariant differential form $$v\to \omega + \mu^*$$ is equivariantly closed, where $\mu^*(v) = \langle \mu, v\rangle$.
\end{definition}

The reader will easily check that this agrees with the usual definition of a moment map.  Usually one considers moment maps in the case that $\omega$ is a symplectic form, but throughout we allow the definition of a moment map to include $\omega$ that are not necessarily positive.

We will  use the following two properties of equivariant differential forms extensively, the proofs of which are straightforward calculations.

\begin{proposition}\label{prop:EDG} The equivariant differential is compatible with wedge products and fibre integrals:

\begin{enumerate}
\item[(i)] If $\beta, \gamma$ are equivariantly closed,  their wedge product  $$(\beta\wedge\gamma)(v) := \beta(v)\wedge\gamma(v)$$ is equivariantly closed.
\item[(i)] Suppose $\pi: M \to B$ is a $K$-equivariant proper submersion. If $\beta$ is equivariantly closed on $M$, its fibre integral defined by $$\left(\int_{M/B}\beta\right)(v) := \int_{M/B} (\beta(v))$$ is also equivariantly closed.
\end{enumerate}

\end{proposition}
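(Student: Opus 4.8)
The plan is to deduce both parts from a single principle: for each fixed $v\in\mfk$, the operator $D_v := d + \iota_v$ on $\Omega^*(M)$ is an \emph{odd derivation} of the exterior algebra, since both $d$ and $\iota_v$ are. By definition $(d_{\eq}\beta)(v) = D_v(\beta(v))$, so equivariant closedness of $\beta$ is exactly the statement that $D_v\beta(v)=0$ for all $v$, and the two assertions become statements that $D_v$ is compatible with wedge products and with fibre integration.

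For (i), I would start from $(d_{\eq}(\beta\wedge\gamma))(v) = D_v(\beta(v)\wedge\gamma(v))$ and apply the graded Leibniz rule for the odd derivation $D_v$. Reducing by bilinearity to homogeneous equivariant forms $\beta,\gamma$ — so that the ordinary forms $\beta(v),\gamma(v)$ have pure parity — this gives
\[
D_v(\beta(v)\wedge\gamma(v)) = (D_v\beta(v))\wedge\gamma(v) + (-1)^{|\beta(v)|}\beta(v)\wedge(D_v\gamma(v)),
\]
and both terms vanish since $D_v\beta(v) = (d_{\eq}\beta)(v)=0$ and likewise for $\gamma$. The only point needing attention is that the graded-Leibniz sign for the degree-lowering piece $\iota_v$ coincides with that for the degree-raising piece $d$; this matches precisely because both are odd, which is exactly what lets the two contributions be packaged into the single derivation $D_v$.

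For (ii), I would prove the intertwining $\pi_*\circ d_{\eq} = d_{\eq}\circ\pi_*$ (evaluated at $v$), where $\pi_* = \int_{M/B}$ and $v$ denotes the fundamental vector fields $v^M$, $v^B$ on $M$ and $B$; the conclusion is then immediate, since $\pi_*((d_{\eq}\beta)(v))=0$ forces $(d_{\eq}\pi_*\beta)(v)=0$. This splits into two identities, which I would check in a local trivialisation $U\times F$ with base coordinates $x^i$ and fibre coordinates $y^a$. First, $\pi_* d = d\pi_*$: only the fibre-top part survives $\pi_*$, horizontal derivatives pass through by differentiation under the integral sign, and purely vertical derivatives integrate to zero over the closed fibre by Stokes' theorem — this is where properness (compact, boundaryless fibres) enters. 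Second, $\pi_*\iota_{v^M} = \iota_{v^B}\pi_*$: the $K$-equivariance of $\pi$ guarantees $v^M$ is $\pi$-related to $v^B$, so the horizontal component of $v^M$ has coefficients depending only on $x$, while its vertical component contracts a $dy$ and drops the fibre degree below top, hence is killed by $\pi_*$; what remains is exactly $\iota_{v^B}\pi_*$. Adding the two gives $\pi_*(d+\iota_{v^M}) = (d+\iota_{v^B})\pi_*$.

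The main (indeed essentially the only) obstacle is the sign and orientation bookkeeping in (ii): one must fix a fibre-orientation convention under which $\pi_*$ commutes with $d$ and with $\iota_v$ with consistent signs, and one must genuinely use $K$-equivariance of $\pi$ to know the fundamental vector fields are $\pi$-related, since the contraction identity fails otherwise. I would also record in passing that these operations stay within equivariant forms — wedge products of $K$-equivariant forms are $K$-equivariant, and fibre integration over a $K$-equivariant submersion commutes with the $K$-action — so that the outputs are bona fide equivariant forms to which equivariant closedness applies.
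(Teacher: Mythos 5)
Your proof is correct. There is in fact no ``paper proof'' to compare against: the paper asserts this proposition with the remark that the proofs are straightforward calculations (pointing to Chapter 7 of Berline--Getzler--Vergne), so your write-up supplies exactly the calculations being alluded to. Packaging $d+\iota_v$ into a single odd derivation $D_v$ is the cleanest route to (i), and your coordinate verification of $\pi_*\circ(d+\iota_{v^M})=(d+\iota_{v^B})\circ\pi_*$ in (ii) --- Stokes' theorem on the closed fibres for the $d$-part, $\pi$-relatedness of the fundamental vector fields (from equivariance of $\pi$) for the contraction part --- is the standard argument; as you note, even if one's orientation convention introduces a uniform sign $(-1)^{\dim F}$ in both intertwining identities, the conclusion is unaffected. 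One small point you should make explicit in (i): the reduction ``by bilinearity'' to $\beta(v)$, $\gamma(v)$ of pure parity uses the fact that the even and odd parts of an equivariantly closed form are separately equivariantly closed. This is true, but it deserves a line: since $d$ and $\iota_v$ are both odd, $D_v$ reverses parity, so the images of the even and odd parts of $\beta(v)$ under $D_v$ land in complementary parities and cannot cancel; hence $D_v\beta(v)=0$ forces each to vanish. With that observation recorded, the argument is complete.
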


We will use a small number of examples of equivariant differential forms in the present work.

\begin{example}\label{ex:EDG}  Specialising to K\"ahler geometry, the following are examples of equivariantly closed differential forms \cite[Section 4.1]{dervan2024universalstructuremomentmaps}. 

    \begin{enumerate}
    \item[(i)] Suppose $K$ acts holomorphically on a complex manifold $X$, preserving a K\"ahler metric $\omega$, and let $\mu: X \to \mathfrak k^*$ be a moment map for the $K$-action on $(X,\omega)$, so that $\omega+\mu^*$ is equivariantly closed. Then the equivariant differential form $\Ric\omega - \Delta \mu^*$, where $\Delta$ is the Laplacian, is equivariantly closed (where by definition $(\Delta\mu^*)(v) = \Delta (\langle \mu,v\rangle)$). Concretely, along with the natural equivariance condition, this asks that if $\iota_v\omega = -dh$, then $\iota_v\Ric\omega = d\Delta h$. 
    \item[(ii)] We consider a relative version of the previous setting. Thus let $\pi: X\to B$ be a proper holomorphic submersion of relative dimension $m$, which is equivariant with respect to $K$-actions on $X$ and $B$. Endow further $X$ with a $K$-invariant relatively K\"ahler metric $\omega$ (so that $\omega$ is a closed $(1,1)$-form restricting to a K\"ahler metric $\omega_b$ on each fibre $X_c=\pi^{-1}(b)$ of $\pi$). The form $\omega$ induces a Hermitian metric on the relative tangent bundle $V^{1,0} = \ker d\pi$, hence on its top exterior power $-K_{X/B}= \Lambda^mV^{1,0}$, which is the relative anticanonical class; we denote by $\rho \in c_1(-K_{X/B})$ the curvature of this metric. Denote further $\Delta_V$ is the vertical (or relative) Laplacian, defined (on functions) by $$(\Delta_V\phi)(b) = (\Delta_{X_b}\phi|_{X_b})(b).$$ If $\omega + \mu^*$ is equivariantly closed, then the equivariant differential form $\rho - \Delta_V \mu^* $ is equivariantly closed.
    \end{enumerate}
\end{example}

\subsection{The scalar curvature as a moment map}\label{sec2.2}

We recall from \cite[Section 4.2]{dervan2024universalstructuremomentmaps} how to  employ equivariant differential geometry to cast the scalar curvature as a moment map.    In the present section we explain this in the finite-dimensional framework of a proper holomorphic submersion between complex manifolds, and in the subsequent section we consider the infinite-dimensional framework of the space of all almost complex structures compatible with a given symplectic form.

We thus return to the setting of Example \ref{ex:EDG}, so  $\pi\colon (X,\omega)\to B$ is a proper holomorphic submersion of relative dimension $m$, equivariant with respect to the holomorphic action of a compact (finite-dimensional) Lie group $K$, with $\omega$ relatively K\"ahler and $K$-invariant. 

On a given fibre $(X_b,\omega_b)$, with $\omega_b := \omega_X|_{X_b}$ the restriction (which is assumed K\"ahler), we will be interested in the scalar curvature 
$$S(\omega_b) = m\frac{\Ric\omega_b\wedge \omega_b^{m-1}}{\omega_b^m},$$ whose (topological) average is given by $$\hat S_b = m\frac{c_1(X_b)\cdot[\omega_b]^{m-1}}{[\omega_b]^{m}},$$ which is independent of $b\in B$.

\begin{definition} We define the \emph{Weil--Petersson form} on $B$ to be $$\alpha_{\pi} := -\int_{X/B}\rho\wedge\omega^m + \frac{\hat S_b}{m+1}\int_{X/B}\omega^{m+1}.$$
\end{definition}

Thus we obtain a closed, $K$-invariant $(1,1)$-form on the base $B$ of the holomorphic submersion. The Weil--Petersson form $\alpha_{\pi}$ is not necessarily K\"ahler in general, but it is positive in various natural settings. See \cite{fujikischumacher} for foundational results on its positivity, and \cite{fineross_CM} for examples in which it cannot be even semipositive.

Assuming there is a moment map $\mu: X \to \mfk^*$ for the $K$-action on $(X,\omega)$, we have the following, whose proof we recall from \cite{dervan2024universalstructuremomentmaps}.

\begin{theorem}\label{thm:momentmaponB}\cite[Theorem 4.6]{dervan2024universalstructuremomentmaps} A moment map $\nu: B\to\mfk^*$ for the $K$-action on $(B,\alpha_{\pi})$ is given by $$\langle\nu,v\rangle(b) = \int_{X_b}\langle\mu,v\rangle(S(\omega_b)-\hat S_b)\omega_b^m.$$
\end{theorem}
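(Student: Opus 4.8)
The plan is to exhibit the equivariant differential form $\alpha_\pi + \nu^*$ as equivariantly closed by building it out of the equivariantly closed forms supplied by Example \ref{ex:EDG} and then applying the compatibility of the equivariant differential with wedge products and fibre integrals from Proposition \ref{prop:EDG}. Concretely, the definition of $\alpha_\pi$ is a fibre integral over $X/B$ of a polynomial in $\omega$ and $\rho$, so I want to upgrade each ingredient to its equivariant refinement. By hypothesis $\omega + \mu^*$ is equivariantly closed, and by Example \ref{ex:EDG}(ii) so is $\rho - \Delta_V\mu^*$. The key point will be that $\alpha_\pi$ is the fibre integral of a product of these, and hence its correct equivariant completion is forced: I expect $\nu^*$ to emerge as precisely the ``extra'' term produced by fibre-integrating the equivariant corrections $\mu^*$ and $\Delta_V\mu^*$.

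First I would form the equivariantly closed form on $X$ obtained by wedging and combining the two forms from Example \ref{ex:EDG}, namely a form whose degree-$2$ (in the manifold variable) part on each fibre recovers $-\rho\wedge\omega^m + \tfrac{\hat S_b}{m+1}\omega^{m+1}$, matching the integrand defining $\alpha_\pi$. By part (i) of Proposition \ref{prop:EDG} this combination is equivariantly closed on $X$, and by part (ii) its fibre integral over $X/B$ is equivariantly closed on $B$. The next step is to read off the two pieces of this fibre integral: its purely $2$-form part is $\alpha_\pi$ by definition, and its moment-map part is a function on $B$ obtained by integrating over each fibre the degree-zero contributions coming from $\mu^*$ and $\Delta_V\mu^*$. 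I would then compute this degree-zero part explicitly. Expanding the equivariant product, the fibrewise integrand contributing to $\langle\nu,v\rangle$ should assemble, after an integration by parts on the fibre to move the vertical Laplacian off $\mu^*$, into $\int_{X_b}\langle\mu,v\rangle(S(\omega_b)-\hat S_b)\,\omega_b^m$, which is exactly the claimed formula.

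The main obstacle, and the step requiring genuine care, is the bookkeeping that converts the $\Delta_V\mu^*$ contribution into the scalar curvature term $S(\omega_b)-\hat S_b$. Here I would use the standard fibrewise identity $m\,\Ric\omega_b\wedge\omega_b^{m-1} = S(\omega_b)\,\omega_b^m$ together with $\rho|_{X_b} = \Ric\omega_b$, and integrate by parts over the compact fibre $X_b$ to transfer $\Delta_V$ from $\mu^*$ onto $\omega_b^{m+1}$ (equivalently, to relate $\int_{X_b}(\Delta_V\langle\mu,v\rangle)\,\omega_b^{m+1}$ to $\int_{X_b}\langle\mu,v\rangle\,\Delta_V(\cdots)$). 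The constant $\tfrac{\hat S_b}{m+1}$ in the definition of $\alpha_\pi$ is precisely tuned so that the topological average $\hat S_b$ appears correctly and the fibrewise integrals of total Laplacians vanish; verifying that these constants and signs line up is the one delicate computation. Finally I would confirm that the $K$-equivariance of $\nu$ holds, which is automatic since $\mu$ is $K$-equivariant and all the operations (fibre integration, the vertical Laplacian, restriction to fibres) are $K$-natural; this verifies that $\nu$ is a bona fide moment map in the sense defined above.
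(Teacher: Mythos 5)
Your proposal is correct and takes essentially the same route as the paper: one wedges the equivariantly closed forms $\omega+\mu^*$ and $\rho-\Delta_V\mu^*$ (together with the power $(\omega+\mu^*)^{m+1}$ scaled by $\hat S_b/(m+1)$), fibre-integrates using Proposition \ref{prop:EDG}, and identifies the $(1,1)$-part with $\alpha_{\pi}$ and the function part with $\nu^*$ via $m\,\rho_b\wedge\omega_b^{m-1}=S(\omega_b)\,\omega_b^m$ and $\rho_b=\Ric\omega_b$. One small correction to your bookkeeping: the scalar curvature term comes from the cross-term $m\,\mu^*\,\omega^{m-1}\wedge\rho$, whereas the $\Delta_V\mu^*$ contribution pairs with $\omega_b^m$ and vanishes outright because the fibre integral of a Laplacian against the volume form is zero---no transfer of $\Delta_V$ onto $\omega_b^{m+1}$ is needed (indeed $\omega_b^{m+1}$ vanishes identically on the $m$-dimensional fibre).
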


\begin{proof}
By Proposition \ref{prop:EDG} and Example \ref{ex:EDG}, the equivariant differential form $$v\to (\omega+\langle\mu,v\rangle)^m\wedge(\rho-\Delta\langle\mu,v\rangle)$$ is equivariantly closed, so again by Proposition  \ref{prop:EDG} its fibre integral is equivariantly closed on $B$. This fibre integral is a sum of three terms for dimensional reasons; the first is the closed $(1,1)$-form $-\int_{X/B}\rho\wedge\omega^m$; the second (for a choice of $v$) is a function on $B$ whose value at $b\in B$ is $$m\int_{X_b}\langle\mu,v\rangle(\rho_b\wedge\omega_b^{m-1}) = \int_{X_b}\langle\mu,v\rangle S(\omega_b)\omega_b^m$$ where the equality uses the definition of scalar curvature and the fact that $\rho_b = \Ric(\omega_b)$ by construction;  the third is the function on $B$ whose value at $b\in b$ is $$\int_{X_b}\Delta_b\langle\mu,v\rangle\omega_b^m = 0.$$ Thus the equivariant differential form $$v\to -\int_{X/B}\rho\wedge\omega^m + \int_{X/B}\langle\mu,v\rangle S_{V}(\omega)\omega^n$$ with $S_{V}(\omega)$ the vertical (i.e.\ relative or fibrewise) scalar curvature, is equivariantly closed. A similar, easier, calculation holds involving $\frac{\hat S_b}{m+1}\int_{X/B}\omega^{m+1}$ producing the average scalar curvature in the moment map, proving the result by definition of the Weil--Petersson form.\end{proof}

\begin{remark}\label{remark:interpretationzero}
It is worth noting that a zero of the moment map $\sigma$ is not necessarily a point for which $S(\omega_b )-\hat{S}_b$ vanishes, but is merely a point where the function $S(\omega_b )-\hat{S}_b$ is $L^2$-orthogonal to the space $\mu_b^*(\mfk)$ where $\mu^*:\mfk\to X$ denotes the comoment map (that satisfies $\mu_b^*(v) = \langle \mu, v\rangle|_{X_b}$). 
Since we are assuming $K$ is finite dimensional, $\mu_b^*(\mfk)$ is also finite dimensional and so a proper subset of all functions on $X_b$.
\end{remark}

\begin{remark}

This approach to the moment map property of the scalar curvature (which we take in our work on the twisted scalar curvature) has found several applications, such as to the deformation theory of cscK manifolds \cite[Appendix A]{ortu_mmap}, the behaviour of the cscK condition under blowups \cite{DS-blowups} and under the formation of projective bundles \cite{OS-projective}.
\end{remark}

\subsection{The space of almost complex structures}\label{Donaldson-Fujiki-results}
 
We turn to the analogous infinite-dimensional theory. Consider a compact $2m$-dimensional symplectic manifold $(M,\omega)$, and denote by $\mathscr J(M,\omega)$ the space of almost complex structures on $M$ that are compatible with $\omega$.   As explained in \cite[Section 1]{scarpa2021hitchincsck} (to which we refer for details on the present section), the space $\mathscr J(M,\omega)$ can be identified with the space of sections of a certain fibre bundle, and as such it admits a natural structure of a complex Fr\'echet manifold, whose tangent space at a point $J\in \mathscr J(M,\omega)$ is 
$$ T_J \mathscr J(M,\omega) = \{A \in \End(TM) : JA + AJ =0 \text{ and } \omega(AX,JY) = \omega(JX,AY)\}.$$ The natural almost complex structure $\mathbb J$ on $\mathscr J(M,\omega)$ is then given by 
\begin{equation}\mathbb J(A) = JA \text{ for } A\in T_J \mathscr J(M,\omega)\label{eq:holomorphicstructureonmathscrJ};\end{equation} this almost complex structure is integrable, and corresponds to the complex Fr\'echet manifold structure on $\mathscr J(M,\omega)$.

Inside $\mathscr J(M,\omega)$ sits the subspace of integrable almost complex structures, described by the vanishing of the Nijenhuis tensor 
$$ N_J(X,Y) = [X,Y] + J([JX,Y]+[X,JY]-[JX,JY])$$
which we denote by
$$\mathscr J^{\Int}(M,\omega) :=\{J\in \mathscr J(M,\omega) : N_J =0 \}.$$
This is space is possibly singular and since we do not wish, or need, to go into the details of Fr\'echet structures which may not be smooth, we instead work formally declaring that the ``tangent space" to a point $J$ in $\mathscr J^{\Int}(M,\omega)$ consists of those $A\in T_J \mathscr J(M,\omega)$ such that  $N_{J+tA} = O(t^2)$.  Explicitly if we set
$$P_A(X,Y): = J([AX,Y] + [X,AY]) + A[JX,Y]  + A[X,JY] - [AX,JY] - [JX,AY]$$
then
$$T_J\mathscr J^{\Int}(M,\omega) := \{ A\in T_J \mathscr J(M,\omega) : P_A=0\}.$$
A calculation, using $N_J(X,Y)=0$, yields
$$P_{JA}(X,Y) = -JP_A(JX,JY) \text{ for } A\in T_J\mathscr J_{\Int}(M,\omega)$$
which implies that if $A$ is in $T_J\mathscr J^{\Int}(M,\omega)$ then so is $JA$.  That is, $\mathbb J$ formally induces an almost complex structure on $\mathscr J^{\Int}(M,\omega)$.

Now let $\mathcal G$ be the group of exact symplectomorphisms of $(M,\omega)$, so the Lie algebra of $\mathcal G$ is the space $C^{\infty}(M)/\R$ of functions modulo constants which is endowed with the Poisson bracket.  Note that the group $\mathcal G$ group acts on $\mathscr J(M,\omega)$ by conjugation. Define a K\"ahler form $\Omega$ on $\mathscr J^{\Int}(M,\omega)$ whose associated K\"ahler metric is given by 
\begin{equation}\label{mabuchi}
\langle A_1,A_2 \rangle := \int_M \langle A_1, A_2\rangle_{g_J} \omega^m,
\end{equation}
at a point $J \in \mathscr J^{\Int}(M,\omega)$, where the pairing $\langle A_1, A_2\rangle_{g_J}$ is defined using the Riemannian metric $g$ on $M$ induced from $\omega$ and $J$. We note that this K\"ahler structure is actually the restriction of such a structure from the ambient Fr\'echet manifold $\mathscr J(M,\omega)$.

\begin{theorem}\cite{SD-moment,AK-moment}\label{fujiki-donaldson}
A moment map $\mu: \mathscr J^{\Int}(M,\omega) \to \Lie \mathcal G^*$ for the $\mathcal G$-action on $(\mathscr J^{\Int}(M,\omega),\Omega)$ is given by $$\langle \mu, v\rangle(J) = \int_M h_{v}(S(\omega, J) - \hat S)\omega^m,$$ where $\hat S$ is the average scalar curvature, $h_{v}$ is the Hamiltonian of the Hamiltonian vector field $v$, and $S(\omega,J)$ is the scalar curvature of the K\"ahler metric $\omega$ defined using the almost complex structure $J$.    
\end{theorem}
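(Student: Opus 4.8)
The plan is to verify the moment map condition directly, in the spirit of Donaldson and Fujiki. Write $v = h \in C^\infty(M)/\R$, let $X_h$ be the associated Hamiltonian vector field, and let $Y_h := \mathcal L_{X_h}J \in T_J\mathscr J^{\Int}(M,\omega)$ be the infinitesimal generator of the $\mathcal G$-action, obtained by differentiating the conjugation action (this is tangent to $\mathscr J^{\Int}(M,\omega)$ since $\mathcal G$ preserves integrability). Because $\mathbb J A = JA$ and $\Omega$ is the K\"ahler form of the metric \eqref{mabuchi}, we have $\Omega_J(Y_h,A) = \int_M \langle JY_h, A\rangle_{g_J}\,\omega^m$ for every $A \in T_J\mathscr J^{\Int}(M,\omega)$. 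On the other hand, since $\omega$ and the topological constant $\hat S$ are fixed, differentiating $\langle\mu,h\rangle(J) = \int_M h\,(S(\omega,J)-\hat S)\,\omega^m$ in the direction $A$ yields $\int_M h\,(D_J S\cdot A)\,\omega^m$, where $D_J S$ is the linearisation of the scalar curvature in $J$. Thus the moment map condition reduces, up to the sign conventions inherent in the equivariant differential, to the single $L^2$-duality identity
\begin{equation}\label{eq:keyadjoint}
\int_M h\,(D_J S\cdot A)\,\omega^m \;=\; \int_M \langle JY_h, A\rangle_{g_J}\,\omega^m \qquad \text{for all } A\in T_J\mathscr J^{\Int}(M,\omega).
\end{equation}

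To establish \eqref{eq:keyadjoint} I would first record explicit descriptions of the two operators involved. Identifying $T_J\mathscr J^{\Int}(M,\omega)$ with $T^{1,0}M$-valued $(0,1)$-forms, the generator $Y_h = \mathcal L_{X_h}J$ corresponds (up to a universal constant) to $\mathcal D h := \bar\partial(\nabla^{1,0}h)$, where $\nabla^{1,0}h$ is the $(1,0)$-part of the gradient of $h$; this is the standard computation relating the Hamiltonian flow to the natural $\bar\partial$-operator on the holomorphic tangent bundle. Second, I would recall the formula for the linearisation $D_J S\cdot A$ of the Riemannian scalar curvature under a deformation of the complex structure with $\omega$ held fixed. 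The crux is then to show that, after pairing against $h$ and integrating by parts, $D_J S$ is the formal $L^2$-adjoint of $\mathcal D$ (up to the twist by $J$), so that \eqref{eq:keyadjoint} becomes the tautological adjunction between $\mathcal D h$ and $A$. The second-order operator $\mathcal D^\ast\mathcal D$ arising here is precisely the Lichnerowicz operator, and it is this adjunction that encodes the moment map property.

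I expect this adjunction to be the main obstacle: it requires a careful double integration by parts using the K\"ahler identities, and both these identities and the very expression of $D_J S\cdot A$ in terms of $\mathcal D^\ast A$ hold on the nose only because $J$ is integrable and because $A$ satisfies the first-order integrability condition $P_A = 0$ defining $T_J\mathscr J^{\Int}(M,\omega)$; the integrability is exactly what forces the error terms to vanish. (On the full space $\mathscr J(M,\omega)$ one must replace $S(\omega,J)$ by the Hermitian scalar curvature for the corresponding statement to survive.) I would therefore isolate this computation as the technical heart of the proof.

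Two bookkeeping points then remain. Since $\Lie\mathcal G = C^\infty(M)/\R$, the moment map must annihilate constants, and indeed $\int_M 1\cdot (S(\omega,J)-\hat S)\,\omega^m = 0$ by the very definition of the average $\hat S$; this is what forces the normalisation by $\hat S$. Finally one must check the $\mathcal G$-equivariance of $\mu$ demanded by the definition of a moment map, which follows from the naturality of the scalar curvature under diffeomorphisms together with \eqref{eq:keyadjoint}, and is routine once the duality identity is in hand.
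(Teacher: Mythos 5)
Your outline is essentially correct, and it reproduces the classical direct argument of Donaldson and Fujiki: identify the infinitesimal action $\mathcal L_{X_h}J$ with $\bar\partial(\nabla^{1,0}h)$, differentiate the candidate moment map in $J$ with $\omega$ fixed, and reduce everything to an $L^2$-adjunction between the linearised scalar curvature and the operator $\mathcal D$, with the Lichnerowicz operator $\mathcal D^*\mathcal D$ appearing as the composite. This is, however, a genuinely different route from the one the paper takes. The paper does not re-prove the linearisation/adjunction identity at all: it cites the original references, and then indicates (end of Section \ref{Donaldson-Fujiki-results}) that the theorem can be recovered from the equivariant differential geometry of the universal family $\mathcal U = M \times \mathscr J(M,\omega) \to \mathscr J(M,\omega)$, equipped with the pulled-back relatively K\"ahler metric and the tautological moment map $\langle \mu_{\mathcal U},v\rangle = \pi_M^* h_v$. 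In that approach one applies the fibre-integration argument of Theorem \ref{thm:momentmaponB}: the equivariant form $(\omega+\mu^*)^m\wedge(\rho-\Delta_V\mu^*)$ is equivariantly closed by Proposition \ref{prop:EDG} and Example \ref{ex:EDG}, and pushing it forward along $\mathcal U \to \mathscr J(M,\omega)$ produces exactly the pairing $\int_M h_v(S(\omega,J)-\hat S)\omega^m$ as the function component of an equivariantly closed form, which is the moment map property by definition. What each approach buys: yours is self-contained and makes transparent the analytic mechanism (where integrability enters, why the Hermitian scalar curvature is needed on the non-integrable locus, and how the Lichnerowicz operator arises), but its technical heart --- the double integration by parts establishing your identity \eqref{eq:keyadjoint}, which you correctly flag as the crux --- is deferred rather than executed, so as written your argument is a plan whose hardest step remains to be done; the paper's route trades that delicate computation for soft equivariant-cohomological bookkeeping, which is precisely why it scales to the paper's new settings (maps, submersions, foliations) where a direct linearisation computation would be substantially more painful. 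Your two bookkeeping points (annihilation of constants via the normalisation by $\hat S$, and $\mathcal G$-equivariance from naturality of the scalar curvature) are correct and handled the same way in either approach.
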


To tie this into the framework of equivariant differential forms, we note that there is a universal family $\mathcal U{\to} \mathscr J(M,\omega)$ whose fibre $\mathcal M_J$ over a point $J\in \mathscr J(M,\omega)$ is the almost complex manifold $(M,\omega,J)$ \cite[Section 5]{dervan2024universalstructuremomentmaps}.  As a Fr\'echet manifold 
$$\mathcal U = M\times \mathscr J(M,\omega)$$
which is given the almost complex structure
$$\mathbb J(v,A) = (Jv, JA) \text{ for } v\in T_x M \text{ and } A\in T_J\mathscr J(M,\omega).$$ In turn this induces an almost complex structure on $M\times \mathscr J^{\Int}(M,\omega)$,
and with this structure it is easy to check that $\pi:\mathcal U\to \mathscr J(M,\omega)$ is holomorphic and further $\mathcal G$-equivariant. 

The universal family $\mathcal U$ admits a natural relatively K\"ahler metric $\omega_{\mathcal U}$, which is simply the pullback $\pi_M^*\omega$ of $\omega$ through the natural projection $\pi_M: \mathcal U\to M$. The action of $\mathcal G$ on $\mathcal U$ is then Hamiltonian, with tautological moment map $\mu_{\mathcal U}: \mathcal U \to \Lie \mathcal G^*$ defined by $\langle \mu_{\mathcal U}, v\rangle = \pi_M^*h_v$. One may then also prove Theorem \ref{fujiki-donaldson} using the geometry of the universal family and the approach of Section \ref{sec2.2} \cite[Section 5]{dervan2024universalstructuremomentmaps}.

\section{Maps}\label{sec:maps}

\subsection{Maps in finite dimensions}\label{sec3.1}

Consider a proper holomorphic submersion of relative dimension $m$ $$\pi: X\to B$$ between complex manifolds $X$ and $B$, not necessarily compact. Let $K$ be a compact Lie group acting holomorphically on $X$ and $B$, making $\pi$ a $K$-equivariant map, and fix a $K$-invariant relatively K\"ahler metric $\omega$ on $X$. Fix an additional K\"ahler manifold $(Y,\alpha)$ and a map $$p: X\to Y$$ that is $K$-invariant, i.e.\ for all $g\in K$, $p\circ g = p.$ We view this data as a $K$-equivariant family of maps $p: (X_b,\omega_b) \to (Y,\alpha)$, parametrised by $B$. We are motivated by the case that $Y$ is a moduli space endowed with a Weil--Petersson form (for example parametrising the fibres of a holomorphic submersion), so we retain the notation $\alpha$ for the K\"ahler metric on the target $Y$.

We will be interested in the \emph{twisted scalar curvature} on each fibre $X_b$, given by 
\begin{equation}
S_{p}(x) := S(\omega_{b}) - \Lambda_{\omega_{b}}p^*\alpha,
\end{equation} 
where $b=\pi(x)$ and we omit that $p^*\alpha$ has been restricted to the fibre $X_b$. We also let $\hat S_{p}$ denote its fibrewise average (which we note is in general different from $\hat S_b$).

\begin{lemma}
The (constant) equivariant differential form $v\to p^*\alpha$ on $X$ is equivariantly closed.
\end{lemma}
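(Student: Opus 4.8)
The plan is to show that the equivariant differential form $\beta(v) = p^*\alpha$, which is constant in the sense that it does not depend on $v\in\mfk$, satisfies $d_{\eq}\beta = 0$. By the definition of the equivariant differential, I must verify that for every $v\in\mfk$,
\[
(d_{\eq}\beta)(v) = \iota_v(p^*\alpha) + d(p^*\alpha) = 0.
\]
So there are exactly two things to check: that $p^*\alpha$ is closed, and that the contraction $\iota_v(p^*\alpha)$ vanishes for every $v$ in the Lie algebra.

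First I would observe that $p^*\alpha$ is closed. Since $\alpha$ is a K\"ahler metric on $Y$ it is in particular a closed form, and pullback commutes with the exterior derivative, so $d(p^*\alpha) = p^*(d\alpha) = 0$ immediately. This disposes of the second summand in $d_{\eq}\beta$.

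The heart of the matter is the vanishing of $\iota_v(p^*\alpha)$, and this is where the $K$-invariance hypothesis on $p$ enters decisively. The assumption is that $p\circ g = p$ for all $g\in K$; differentiating this relation in the group direction shows that the vector field on $X$ induced by $v\in\mfk$ lies in the kernel of $dp$, i.e.\ $dp(v) = 0$ at every point of $X$. Concretely, if $g_t = \exp(tv)$ is the one-parameter subgroup generated by $v$, then $p\circ g_t = p$ for all $t$, and differentiating at $t=0$ gives $dp(v) = \tfrac{d}{dt}\big|_{t=0}(p\circ g_t) = 0$. Consequently, for any vectors $w_1,\dots$ on $X$,
\[
(\iota_v p^*\alpha)(w) = (p^*\alpha)(v, w) = \alpha(dp(v), dp(w)) = \alpha(0, dp(w)) = 0,
\]
so $\iota_v(p^*\alpha)$ vanishes identically. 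Combining the two computations yields $(d_{\eq}\beta)(v)=0$ for all $v$, which is the assertion.

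The only genuine subtlety is bookkeeping around the equivariance of $\beta$: the definition of an equivariant differential form also requires $\beta$ to be a $K$-equivariant polynomial map $\mfk\to\Omega^*(X)$, and here I should note that the $K$-invariance of $p$ makes $p^*\alpha$ a $K$-invariant form on $X$, so the constant map $v\mapsto p^*\alpha$ is automatically equivariant (it is constant, and its value is fixed by the $K$-action). I do not anticipate any real obstacle; the lemma is essentially a direct unwinding of the definitions, with the single conceptual input being that $K$-invariance of $p$ forces the induced vector fields to be tangent to the fibres of $p$, killing the contraction term.
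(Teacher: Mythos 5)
Your proof is correct and follows essentially the same route as the paper: both arguments reduce the claim to the vanishing of $\iota_{v}(p^*\alpha)$ and derive it from the $K$-invariance of $p$, the paper by noting that $p$-relatedness forces $\iota_{\zeta_X(v)}p^*\alpha = p^*(\iota_{\zeta_Y(v)}\alpha) = 0$ for the trivial action on $Y$, and you by differentiating $p\circ g_t = p$ to get $dp(v)=0$ directly — the same fact in two phrasings. Your explicit treatment of closedness and equivariance is sound bookkeeping that the paper leaves implicit.
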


\begin{proof} Equivariance of the form follows from $K$-invariance of $p^*\alpha$. So we must show that, for all $v\in\mathfrak k$, $\iota_vp^*\alpha=0.$ Let us write $\zeta_X(v)$ for the real holomorphic vector field on $X$ induced by the $K$-action on $X$. Then $K$-equivariance of $p$ (with $Y$ given the trivial $K$-action) implies $$\iota_{\zeta_X(v)}p^*\alpha= p^*(\iota_{\zeta_Y(v)}\alpha),$$ which vanishes since the $K$-action on $Y$ is trivial, so $\zeta_Y(v)=0$. \end{proof}

We next turn to the base geometry, and view $B$ as being the base of the family of maps $p_b: X_b\to B.$ Define $$\Omega_{\alpha} := \int_{X/B} \omega^m\wedge p^*\alpha,$$ which is a $K$-invariant closed $(1,1)$-form that is semipositive.

\begin{proposition}\label{prop:j-equation}
Suppose $\mu: X\to \mathfrak k^*$ is a moment map for the $K$-action on $(X,\omega)$. A moment map $\nu_{\alpha}: B\to \mathfrak k^*$ for the $K$-action on $(B,\Omega_{\alpha})$ is given by $$\langle \nu_{\alpha}, v\rangle(b) = \int_{X_b} \langle \mu, v\rangle \Lambda_{\omega_b}(p^*\alpha) \omega_b^m.$$

\end{proposition}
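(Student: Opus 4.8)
The plan is to follow the strategy of the proof of Theorem \ref{thm:momentmaponB}, replacing the relative curvature form $\rho$ by the pullback $p^*\alpha$. The two ingredients are already in place. First, $\omega + \langle \mu, \cdot\rangle$ is equivariantly closed on $X$, since $\mu$ is assumed to be a moment map for $(X,\omega)$. Second, the constant equivariant differential form $v\mapsto p^*\alpha$ is equivariantly closed by the preceding Lemma. By the compatibility of the equivariant differential with wedge products (Proposition \ref{prop:EDG}), applied repeatedly, the product
$$v \mapsto (\omega + \langle \mu, v\rangle)^m \wedge p^*\alpha$$
is equivariantly closed on $X$; and by its compatibility with fibre integrals (again Proposition \ref{prop:EDG}), the fibre integral $\int_{X/B}$ of this form is equivariantly closed on $B$.

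The heart of the argument is then a degree count identifying the pieces of this fibre integral. Since $\langle \mu, v\rangle$ is a function, the binomial expansion gives $(\omega + \langle \mu, v\rangle)^m = \sum_{k=0}^m \binom{m}{k} \langle \mu, v\rangle^k \omega^{m-k}$, and after wedging with $p^*\alpha$ the $k$-th term has degree $2(m-k+1)$. Integration over the $2m$-dimensional fibres annihilates every term with $k\geq 2$, leaving exactly two contributions: the $k=0$ term yields the closed $(1,1)$-form $\int_{X/B}\omega^m\wedge p^*\alpha = \Omega_{\alpha}$, while the $k=1$ term yields the function on $B$ whose value at $b$ is $m\int_{X_b}\langle \mu, v\rangle\, \omega_b^{m-1}\wedge p^*\alpha$.

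To finish I would invoke the standard contraction identity $\beta\wedge\omega_b^{m-1} = \tfrac{1}{m}(\Lambda_{\omega_b}\beta)\,\omega_b^m$ for a $(1,1)$-form $\beta$ (the same identity used implicitly in Theorem \ref{thm:momentmaponB} to pass from $\rho_b\wedge\omega_b^{m-1}$ to the scalar curvature), which converts the $k=1$ contribution into exactly $\int_{X_b}\langle\mu,v\rangle\,\Lambda_{\omega_b}(p^*\alpha)\,\omega_b^m = \langle \nu_{\alpha}, v\rangle(b)$. Hence the equivariantly closed form on $B$ is precisely $v\mapsto \Omega_{\alpha} + \langle \nu_{\alpha}, v\rangle$, which by definition means $\nu_{\alpha}$ is a moment map for the $K$-action on $(B,\Omega_{\alpha})$; the requisite $K$-equivariance of $\nu_{\alpha}$ is inherited from that of the wedge and fibre-integral operations. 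I anticipate no genuine obstacle here, as this is a clean and slightly simpler variant of the proof of Theorem \ref{thm:momentmaponB}; the only point demanding care is the degree bookkeeping guaranteeing that the higher powers of $\langle\mu,v\rangle$ drop out under fibre integration.
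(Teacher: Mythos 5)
Your proof is correct and follows essentially the same route as the paper: equivariant closedness of $(\omega+\mu^*)^m\wedge p^*\alpha$, passage to the fibre integral, and the contraction identity $m\,p^*\alpha\wedge\omega_b^{m-1} = \Lambda_{\omega_b}(p^*\alpha)\,\omega_b^m$. The paper's proof is just a compressed version of yours, leaving the degree bookkeeping (which you carry out explicitly and correctly) implicit.
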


\begin{proof} The equivariant differential form on $X$ given by $(\omega+\mu)^m\wedge p^*\alpha$ is equivariantly closed on $X$, so its fibre integral is equivariantly closed on $B$. This implies the result, since $$\int_{X/B}(\omega+\mu)^m\wedge p^*\alpha = \int_{X/B}\omega^m\wedge p^*\alpha + m\int_{X/B}\mu \omega^{m-1}\wedge p^*\alpha$$ and since on each fibre $X_b$ $$m p^*\alpha \wedge\omega_b^{m-1} = \Lambda_{\omega_b}(p^*\alpha)\omega_b^m.$$ \end{proof}

To involve the scalar curvature, set $$\Omega_p := -\int_{X/B}(\rho+p^*\alpha)\wedge\omega^m + \frac{\hat S_{p}}{m+1} \int_{X/B}\omega^{m+1}.$$
\begin{corollary}
A moment map $\nu_p: B \to \mathfrak k^*$ for the $K$-action on $(B,\Omega_p)$ is given by $$\langle \nu_p,v\rangle(b) = \int_{X_b} \langle \mu, v\rangle(S(\omega_b) - \Lambda_{\omega_b}p^*\alpha - \hat S_{p})\omega_b^m.$$
    
\end{corollary}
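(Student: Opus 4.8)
The plan is to combine the two equivariantly closed forms we have already produced. The corollary's form $\Omega_p$ differs from the Weil--Petersson form $\alpha_\pi$ only by the extra twist $p^*\alpha$ inside the fibre integral $-\int_{X/B}(\rho+p^*\alpha)\wedge\omega^m$ and by using $\hat S_p$ rather than $\hat S_b$. So the natural strategy is to write $\Omega_p$ as the sum $\alpha_\pi + (\text{correction involving } p^*\alpha)$ and to read off the corresponding moment map as the sum of the moment maps from Theorem \ref{thm:momentmaponB} and Proposition \ref{prop:j-equation}, adjusting the average term appropriately.

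\emph{First} I would make the decomposition explicit. Observe that
\begin{align*}
\Omega_p &= -\int_{X/B}\rho\wedge\omega^m + \frac{\hat S_b}{m+1}\int_{X/B}\omega^{m+1} \\
&\quad -\int_{X/B} p^*\alpha\wedge\omega^m + \frac{\hat S_p - \hat S_b}{m+1}\int_{X/B}\omega^{m+1}.
\end{align*}
The first line is precisely $\alpha_\pi$, and the third term is $-\Omega_\alpha$. The final term is a multiple of $\int_{X/B}\omega^{m+1}$. Thus $\Omega_p = \alpha_\pi - \Omega_\alpha + c\int_{X/B}\omega^{m+1}$ for the constant $c=(\hat S_p-\hat S_b)/(m+1)$.

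\emph{Next}, since the moment map property is linear in the symplectic form, I would add the moment maps termwise. Theorem \ref{thm:momentmaponB} supplies $\int_{X_b}\langle\mu,v\rangle(S(\omega_b)-\hat S_b)\omega_b^m$ for $\alpha_\pi$; Proposition \ref{prop:j-equation} supplies $\int_{X_b}\langle\mu,v\rangle\Lambda_{\omega_b}(p^*\alpha)\omega_b^m$ for $\Omega_\alpha$, which enters with a minus sign. The remaining term $c\int_{X/B}\omega^{m+1}$ is handled by the same easy computation already invoked at the end of the proof of Theorem \ref{thm:momentmaponB}: its fibre integral contributes the constant $\hat S_b-\hat S_p$ inside the integrand, recombining with the $-\hat S_b$ already present to yield $-\hat S_p$. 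Summing gives exactly
\[
\langle\nu_p,v\rangle(b)=\int_{X_b}\langle\mu,v\rangle\bigl(S(\omega_b)-\Lambda_{\omega_b}p^*\alpha-\hat S_p\bigr)\omega_b^m,
\]
as claimed.

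\emph{The only subtle point} will be the bookkeeping of the average term: one must check that the constant shift $c\int_{X/B}\omega^{m+1}$ really does convert $\hat S_b$ into $\hat S_p$ without introducing a spurious dependence on $b$. This follows because both $\hat S_b$ and $\hat S_p$ are fibrewise topological averages, hence constant in $b$, so the fibre integral of $\omega^{m+1}$ produces the constant $\int_{X_b}\omega_b^m$ and the arithmetic closes. Everything else is a direct appeal to the linearity of the equivariant-closedness condition and to the two results already established, so I do not expect any genuine obstacle beyond this routine verification.
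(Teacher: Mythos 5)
Your proposal is correct and is essentially the paper's own argument: both rest on linearity of the moment map property in the two-form, Theorem \ref{thm:momentmaponB} (together with its proof, to handle the modified averaging constant), and Proposition \ref{prop:j-equation}. The only difference is bookkeeping — the paper groups the terms as $\Omega_p = (\Omega_p + \Omega_\alpha) - \Omega_\alpha$, observing that $\Omega_p + \Omega_\alpha$ is exactly the Weil--Petersson form with constant $\hat S_p$ in place of $\hat S_b$, whereas you split off the constant shift $\frac{\hat S_p - \hat S_b}{m+1}\int_{X/B}\omega^{m+1}$ as a separate explicit term, which amounts to the same computation.
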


\begin{proof} Theorem \ref{thm:momentmaponB} and its proof shows that a moment map $\nu: B \to \mathfrak k^*$  with respect to the 
Weil--Petersson form $\Omega_p + \Omega_{\alpha}$
is given by $$\langle \nu,v\rangle(b) = \int_{X_b}\langle \mu, v\rangle (S(\omega_b)  -\hat S_{p})\omega_b^m,$$ so the claim follows from Proposition \ref{prop:j-equation}.\end{proof}

\begin{remark}\label{interpretationzerosubmersion}
As in Remark \ref{remark:interpretationzero}, a zero of the moment map $\nu_p$ is not necessarily a point $b\in B$ where $S(\omega_b) - \Lambda_{\omega_b}p^*\alpha - \hat S_{p}$ vanishes, but rather is a point for which $S(\omega_b) - \Lambda_{\omega_b}p^*\alpha - \hat S_{p}$ is $L^2$-orthogonal to the function space $\mu^*_b(\mathfrak k).$ 
\end{remark}

\subsection{Returning to the space of almost complex structures}
We now discuss an infinite-dimensional analogue of the preceding section.  Consider the universal family $$\mathcal U \to \mathscr J^{\Int}(M,\omega).$$ 
Our discussion will apply verbatim to any  complex subspace of $\mathscr J^{\Int}(M,\omega)$ invariant under an appropriate subgroup of $\mathcal G$ in the same manner, but for simplicity of notation we do not include this generalisation.

Assume in addition that we have a holomorphic map $$p: \mathcal U\to Y$$ with $(Y,\omega_Y)$ a finite-dimensional K\"ahler manifold (where we note that such maps are more abundant on complex subspaces of $\mathscr J^{\Int}(M,\omega)$). We denote the stabiliser of $p$ under $\mathcal G$ by
$$\mathcal G_p : = \{ g\in\mathcal G : g\circ p = g\},$$ and set
$$\Omega_{\alpha}: = \int_{\mathcal U/\mathscr J(M,\omega)} p^*\alpha \wedge \omega^n$$
which is a semipositive $(1,1)$-form on $\mathscr J(M,\omega)$. Here, as in Section \ref{Donaldson-Fujiki-results} we may view the form $\omega$ on $M$ as a relatively K\"ahler metric on $\mathcal U \to {\mathscr J}^{\Int}(M,\omega)$ (so that the underlying smooth structure of $\mathcal U$ is $\mathscr J^{\Int}(M,\omega)\times M$ and we simply pull back $\omega$ from $M$). Setting $$\Omega_p: = \Omega - \Omega_{\alpha},$$ where $\Omega$ is the usual Fujiki--Donaldson K\"ahler metric on $\mathscr J^{\Int}(M,\omega)$ with averaging constant $\hat S_{p}$ instead of the average scalar curvature of the fibres, we obtain the following.

\begin{theorem}\label{thm:morphismtwistedmomentmap}
A moment map of the action of  $\mathcal G_p$ on $\left(\mathscr J^{\Int}(M,\omega), \Omega_p\right)$ is given by
$$ \langle\nu,v\rangle(J) = \int_M h_v \left(S(\omega) -  \Lambda_{\omega} p^*\alpha - \hat S_{p}\right) \omega^m $$
where $S(\omega,J)$ denotes the scalar curvature of the K\"ahler metric defined by $(\omega,J)$ and where $\hat S_{p}$ is the relative average twisted scalar curvature.
\end{theorem}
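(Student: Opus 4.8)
The plan is to mirror the finite-dimensional argument of Proposition~\ref{prop:j-equation} and its Corollary, but now transported to the infinite-dimensional Donaldson--Fujiki setting via the universal family $\mathcal U \to \mathscr J^{\Int}(M,\omega)$. The key structural observation is that $\Omega_p = \Omega - \Omega_\alpha$ splits the desired moment map into two pieces, each of which we can handle separately and then add. Specifically, for $\Omega$ we already have Theorem~\ref{fujiki-donaldson}, giving the untwisted scalar curvature term $\int_M h_v(S(\omega,J)-\hat S)\omega^m$; the task reduces to showing that $\Omega_\alpha$ admits the moment map $\langle \nu_\alpha, v\rangle(J) = \int_M h_v \, \Lambda_\omega(p^*\alpha)\,\omega^m$, since subtracting these (and absorbing the constant into $\hat S_p$) yields exactly the claimed formula.

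First I would verify that $\mathcal G_p$ is indeed the relevant symmetry group: because $p$ is $\mathcal G_p$-invariant, the form $\Omega_\alpha = \int_{\mathcal U/\mathscr J} p^*\alpha \wedge \omega^n$ is $\mathcal G_p$-invariant and closed, so asking for a moment map is meaningful. The heart of the argument is the moment map computation for $\Omega_\alpha$. Here I would invoke the equivariant-differential-geometry formalism exactly as in Section~\ref{sec2.2}: the tautological moment map on the universal family is $\langle \mu_{\mathcal U}, v\rangle = \pi_M^* h_v$, so that $\omega + \mu_{\mathcal U}^*$ is equivariantly closed on $\mathcal U$. Since $p$ is holomorphic and $\mathcal G_p$-invariant, the constant equivariant form $v \mapsto p^*\alpha$ is equivariantly closed (the infinite-dimensional analogue of the Lemma preceding Proposition~\ref{prop:j-equation}, with the $\mathcal G_p$-invariance of $p$ playing the role of $K$-invariance). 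Taking the wedge product $(\omega + \mu_{\mathcal U}^*)^m \wedge p^*\alpha$ gives an equivariantly closed form, and Proposition~\ref{prop:EDG}(ii) applied to the proper submersion $\pi: \mathcal U \to \mathscr J^{\Int}(M,\omega)$ shows its fibre integral is equivariantly closed on the base. Expanding as in Proposition~\ref{prop:j-equation}, the degree reasons isolate $\Omega_\alpha$ as the form part and $\int_{M} h_v \, \Lambda_\omega(p^*\alpha)\,\omega^m$ as the function part, using $m\, p^*\alpha \wedge \omega^{m-1} = \Lambda_\omega(p^*\alpha)\,\omega^m$ fibrewise.

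Assembling the pieces, I would then add the moment map for $\Omega$ from Theorem~\ref{fujiki-donaldson} to minus the moment map $\nu_\alpha$ for $\Omega_\alpha$, obtaining the moment map for $\Omega_p = \Omega - \Omega_\alpha$ as $\int_M h_v(S(\omega,J) - \Lambda_\omega p^*\alpha - \hat S_p)\omega^m$, with the averaging constant $\hat S_p$ accounting for the normalization already built into $\Omega_p$. The main obstacle I anticipate is \emph{not} the algebra but the formal justification of the equivariant-differential-geometry machinery in the infinite-dimensional Fr\'echet setting: Proposition~\ref{prop:EDG} and Example~\ref{ex:EDG} are stated for finite-dimensional $K$ and genuine proper submersions, whereas here $\mathcal G_p$ is infinite-dimensional and $\mathcal U \to \mathscr J^{\Int}(M,\omega)$ has infinite-dimensional base. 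I would address this by noting, as the paper does for Theorem~\ref{fujiki-donaldson} via the universal family, that all the relevant computations are fibrewise over $M$ (the fibre integral $\int_{\mathcal U/\mathscr J}$ is integration over the compact manifold $M$), so the equivariant identities hold pointwise in $J$ and the fibre-integration formalism applies verbatim on each fibre; the Hamiltonian $h_v$ replaces $\langle \mu, v\rangle$ as the comoment data. This reduces the infinite-dimensional statement to the same fibrewise curvature identities used in the finite-dimensional case, which is precisely the strategy already validated in the excerpt's treatment of the untwisted scalar curvature.
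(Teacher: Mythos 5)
Your proposal is correct and follows essentially the same route as the paper: the paper's proof likewise decomposes $\Omega_p = \Omega - \Omega_\alpha$, handles the twist term $\int_M h_v \Lambda_\omega p^*\alpha\,\omega^m$ by transporting the equivariant-differential-geometry argument of Section \ref{sec3.1} to the infinite-dimensional setting via the universal family, and then combines this with Theorem \ref{fujiki-donaldson}. Your additional discussion of why the fibrewise nature of the fibre integral over the compact manifold $M$ justifies the formalism is exactly the content the paper compresses into the parenthetical remark that the finite-dimensional arguments ``apply equally well to this infinite-dimensional setting.''
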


\begin{proof}
The moment map property for the term $\int_M h_v \Lambda_{\omega} p^*\alpha \omega^m$ follows from the arguments of Section \ref{sec3.1} (which apply equally well to this infinite-dimensional setting), and the result follows from a combination of this and Theorem \ref{fujiki-donaldson}. \end{proof}

\section{Submersions: a finite-dimensional framework}
\label{sec:findimsubm}

Broadly speaking, the previous section constructed moment maps where we incorporated the additional data of a map to some fixed space $Y$. In geometric applications, one is frequently interested in the situation that $Y$  is a moduli space, in which case it is usually more appropriate to consider families parameterised by some space $C$, instead of the the induced map to the moduli space.  In the language of moduli theory, maps from a $C$ to moduli spaces are naturally in  bijection with families over $C$ only when the moduli space is fine (namely when the objects involved have no nontrivial automorphisms).  Since this frequently fails to hold, it is more actually more natural to consider families directly.

We thus turn to families of holomorphic submersions, where we shall see that the geometry is richer than merely involving the twisted scalar curvature.  Our setting shall be a finite-dimensional family of proper holomorphic submersions,  parametrised by $C$ , which we write as the commutative diagram \begin{center}\begin{tikzcd}
X \arrow{r}{\pi}  \arrow{rd}{} 
  & B \arrow{d}{\gamma} \\
    & C.
\end{tikzcd}\end{center} We emphasise here that we assume $\gamma$ and $\pi$ are both proper, but we do not assume $C$ is compact. Thus for each $c\in C $ we obtain a holomorphic submersion, which we write $\pi_c: X_c\to B_c$. We let $\pi$ have relative dimension $m$, and let  $\gamma$ have relative dimension $n$, so that $X_c$ has dimension $m+n$. In a subsequent section, we will return to the analogous infinite-dimensional framework.

Let $K$ be a compact Lie group acting on $X, B$ and $C$, making each map equivariant. Endow $X$ with a $\pi$-relatively K\"ahler metric $\omega_X$ and $B$ with a $\gamma$-relatively K\"ahler metric $\omega_B$, both $K$-invariant.   We also  assume that there exist moment maps $\mu_{B}: B\to\mfk^*$ and  $\mu_{X}: X\to\mfk^*$. The relatively K\"ahler metric $\omega_{X}$ induces a Weil-Petersson form $\alpha_{\pi}$ on $B$, the base of the holomorphic submersion $\pi: X\to B$, defined as before by $$\alpha_{\pi} := -\int_{X/B}\rho_{X}\wedge\omega_{X}^m + \frac{\hat S_b}{m+1}\int_{X/B}\omega_{X}^{m+1},$$ and where $\rho_X\in c_1(-K_{X/B})$ is defined as before through $\omega_X$. A moment map $\nu_{\pi}: B\to\mfk^*$ for the $K$-action on $(B,\alpha_{\pi})$ is given from Theorem \ref{thm:momentmaponB} by $$\langle\nu_{\pi},v\rangle(b) = \int_{X_b}\langle \mu_{X},v\rangle (S(\omega_b)-\hat S_b)\omega_b^m.$$ Here $\omega_b$ is the restriction of $\omega_X$ to the fibre $X_b$ of $\pi: X\to B$, and $\hat S_b$ is the fibrewise average scalar curvature. Said equivalently, the form $\alpha_{\pi}+\nu_{\pi}$ is equivariantly closed on $B$.  

We induce a moment map for the base $C$ of the family of submersions. We obtain two Weil--Petersson forms: $\alpha_{\pi}$ on $B$ and $\alpha_{\gamma}$ on $C$. To define the moment map, denote by $$\hat S_{\pi,c} = \frac{\int_{B_c}(S(\omega_c) - \Lambda_{\omega_c} \alpha_{\pi,c})\omega_c^n}{\int_{B_c}\omega_c^n}$$ the average twisted scalar curvature (using the obvious notation for restriction, so that $\alpha_{\pi,c}$ is the restriction of $\alpha_{\pi}$ to $B_c$ and $\omega_c$ is the restriction of $\omega_C$ to $B_c$) and denote further 
$$\Omega_C = \alpha_{\gamma} + \int_{B/C} \alpha_{\pi}\wedge \omega_B^n - \left(\frac{\int_{B_c}\Lambda_{\omega_c}\alpha_{\pi,c}\omega_c^n}{(n+1)\int_{B_c}\omega_c^n}\right)\int_{B/C}\omega_B^{n+1},$$ which is a $K$-invariant closed $(1,1)$-form on $C$.

\begin{theorem}
\label{thm:findimmomentmap}
A moment map $\sigma_{\gamma}: C\to\mfk^*$ for the $K$-action on $(C,\Omega_C)$ is given by 
\begin{equation*} \langle\sigma_{\gamma},v\rangle(c) = \int_{B_c}\left(\int_{X_c/B_c}\langle\mu_{X},v\rangle(S(\omega_b) - \hat S_b)\omega_{X}^m\right)\omega_{B_c}^n + \int_{B_c} \langle\mu_{B},v\rangle (S(\omega_{c}) - \Lambda_{\omega_c}\alpha_{\pi,c} - \hat S_{\pi,c})\omega_{B_c}^n
\end{equation*}
for $v\in \mfk$ and $c\in C$.
\end{theorem}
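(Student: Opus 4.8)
The plan is to iterate the construction of Theorem \ref{thm:momentmaponB}, applying it first to the inner submersion $\pi\colon X\to B$ and then to the outer submersion $\gamma\colon B\to C$, with the twist on $B$ supplied by the Weil--Petersson form $\alpha_\pi$. The one genuinely new feature, and the origin of the coupling, is that $\alpha_\pi$ is \emph{not} equivariantly closed on its own: by Theorem \ref{thm:momentmaponB} applied to $\pi$, its canonical equivariantly closed extension on $B$ is
$$v\longmapsto \alpha_\pi + \langle\nu_\pi,v\rangle, \qquad \langle\nu_\pi,v\rangle(b)=\int_{X_b}\langle\mu_X,v\rangle(S(\omega_b)-\hat S_b)\omega_b^m.$$
Propagating the function $\nu_\pi$ through the fibre integral over $B/C$ is exactly what will produce the first, fibrewise term of $\sigma_\gamma$; this is why the outcome is a genuinely coupled system rather than a sum of two unrelated moment maps. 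By contrast, in the maps setting of Section \ref{sec3.1} the twist $p^*\alpha$ was equivariantly closed on its own, so no such term appeared.

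First I would record that $\Omega_C$ is built, in the same manner that $\Omega_p$ of Section \ref{sec3.1} was built from a base Weil--Petersson form corrected by $\Omega_\alpha$, out of an outer Weil--Petersson form $\widetilde\alpha_\gamma := -\int_{B/C}\rho_B\wedge\omega_B^n + \tfrac{\hat S_{\pi,c}}{n+1}\int_{B/C}\omega_B^{n+1}$ (whose averaging constant is the twisted average $\hat S_{\pi,c}$ rather than the fibrewise scalar curvature average $\hat S_c$ of $\alpha_\gamma$) together with the twist integral $\Omega_{\alpha_\pi}:=\int_{B/C}\alpha_\pi\wedge\omega_B^n$. The passage between the two averaging constants is the routine identity $\hat S_{\pi,c}=\hat S_c - \int_{B_c}\Lambda_{\omega_c}\alpha_{\pi,c}\,\omega_c^n\big/\int_{B_c}\omega_c^n$, exactly as in the Corollary of Section \ref{sec3.1}. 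The moment map for $\widetilde\alpha_\gamma$ is then furnished directly by Theorem \ref{thm:momentmaponB} applied to $\gamma$ (with the modified averaging constant, precisely as in that Corollary), giving $\int_{B_c}\langle\mu_B,v\rangle(S(\omega_c)-\hat S_{\pi,c})\omega_c^n$.

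It remains to compute a moment map for $\Omega_{\alpha_\pi}$, and this is where the extension $\alpha_\pi+\nu_\pi$ enters. Mimicking Proposition \ref{prop:j-equation}, I would form the equivariantly closed product $(\omega_B+\mu_B^*)^n\wedge(\alpha_\pi+\nu_\pi)$ on $B$ and fibre-integrate it over $B/C$, which is equivariantly closed on $C$ by Proposition \ref{prop:EDG}. A dimension count along the fibres of $\gamma$, as in all the earlier proofs, shows that only the term linear in $\mu_B^*$ survives in each piece: the two-form part reproduces $\Omega_{\alpha_\pi}$, while the function part acquires two contributions, the term $\int_{B_c}\langle\mu_B,v\rangle\Lambda_{\omega_c}\alpha_{\pi,c}\,\omega_c^n$ from pairing $\mu_B^*$ against $\alpha_\pi$, and the new term $\int_{B_c}\langle\nu_\pi,v\rangle\omega_c^n$ from pairing the constant $\omega_B^n$ against $\nu_\pi$. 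Substituting the formula for $\nu_\pi$ and writing the inner integral as $\int_{X_c/B_c}$ identifies this second contribution with the fibrewise term of $\sigma_\gamma$. Assembling the two moment maps following the bookkeeping of the Corollary of Section \ref{sec3.1} (which fixes the sign with which the $\Lambda_{\omega_c}\alpha_{\pi,c}$ term enters and collects the averaging constants into $\hat S_{\pi,c}$) then yields the stated $\sigma_\gamma$, completing the proof.

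The main obstacle is precisely the correct handling of the equivariant extension of the twist. The conceptual point is that, unlike the fixed target $Y$ of Section \ref{sec3.1}, the form $\alpha_\pi$ is merely closed and $K$-invariant, with $\iota_v\alpha_\pi\neq 0$ in general, so it must first be promoted to $\alpha_\pi+\nu_\pi$ before it can be fed into the fibre-integration machinery, and it is tracking $\nu_\pi$ that generates the coupling. The remaining difficulties are of bookkeeping only: confirming that the Laplacian-type contributions integrate to zero on each fibre $B_c$ (as in Theorem \ref{thm:momentmaponB}), that only the terms linear in $\mu_B^*$ survive the fibre integral, and that the averaging constants and signs assemble exactly into the twisted scalar curvature $S(\omega_c)-\Lambda_{\omega_c}\alpha_{\pi,c}-\hat S_{\pi,c}$. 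None of these is deep once the extension $\alpha_\pi+\nu_\pi$ is in hand, but it is here that the genuine content of the coupled moment map resides.
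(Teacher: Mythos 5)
Your proposal is correct and follows essentially the same route as the paper's proof: both split $\Omega_C$ into the $\gamma$-Weil--Petersson part with its averaging constant shifted from $\hat S_c$ to $\hat S_{\pi,c}$ (handled by Theorem \ref{thm:momentmaponB} applied to $\gamma$), and both obtain the coupled fibrewise term by fibre-integrating the equivariantly closed form $(\alpha_\pi+\nu_\pi)\wedge(\omega_B+\mu_B)^n$ over $B/C$, with the pairing of $\nu_\pi$ against $\omega_B^n$ producing the inner integral over $X_c/B_c$. Your identification of the promotion $\alpha_\pi \mapsto \alpha_\pi+\nu_\pi$ as the source of the coupling is exactly the mechanism in the paper's argument.
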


\begin{proof}
Note first that Theorem \ref{thm:momentmaponB} implies that the integral involving $S(\omega_c)$ is a moment map with respect to $\alpha_{\gamma}$, and the effect of subtracting $\left(\frac{\int_{B_c}\Lambda_{\omega_c}\alpha_{\pi,c}\omega_c^n}{(n+1)\int_{B_c}\omega_c^n}\right)\int_{B/C}\omega_B^{n+1}$ in $\Omega_C$ is to change the constant in this moment map from $\hat S_c$ to $\hat S_{\pi,c}$. Next, the form $$\int_{B/C}(\alpha_{\pi}+\nu_{\pi})\wedge(\omega_{B}+\mu_{B})^n$$ is equivariantly closed, producing three terms. The first is the $(1,1)$-form $\int_{B/C}\alpha_{\pi}\wedge \omega_{B}^n$, while the second is the function $$c\to \left(n\int_{B/C}\mu_B\alpha_{\pi}\wedge\omega_B^{n-1}\right)(c) = \int_{B_c}\mu_{B}\Lambda_{\omega_{B_c}}\alpha_{\pi,c}\omega_{B_c}^n,$$ and the third  is the function 
$$
c\to \left(\int_{B/C}\nu_{\pi}\omega_{B}^n\right)(c)= \int_{B_c}\left(\int_{X_c/B_c}\langle\mu_{X},v\rangle(S(\omega_{b})-\hat S_{b})\omega_{X}^m\right)\omega_{B_c}^n;$$ equivariant closedness of the resulting equivariant differential form produces the desired moment map result. \end{proof}

We have thus produced a moment map which is a coupled system, involving both the fibrewise average scalar curvature and the twisted scalar curvature on the base.

This in particular proves a finite-dimensional version of Theorem \ref{intromainthm}, establishing a moment map property for this coupled system, involving the twisted scalar curvature of the base and the fibrewise scalar curvature. It is crucial in the argument that the base $C$ is smooth, in order to employ various tools from equivariant differential geometry. In the actual setting of Theorem \ref{intromainthm}, the corresponding base---which is the space of integrable almost complex structures on the submersion---is singular. In fact, it is not even naturally a subspace of a smooth space on which the moment map and group action extend. This means that the tools from equivariant differential geometry do not apply, leading us to take a more robust approach.

\section{The transverse Futaki invariant}\label{tfsection}
Our interest is in the construction of moment maps in various settings. In order to prove such results in the generality we desire we work with an adiabatic limit. For a fixed submersion, this means comparing the geometry of a submersion $\pi: (X,\omega_X)\to (B,\omega_B)$ to the geometry of $(X, k\omega_X+\pi^*\omega_B)$ for $k\gg 0$. 

The goal of the present section is to develop tools to tackle this class of problem, in a simplified setting. Rather than constructing moment maps, we produce a main outcome of the existence of a moment map by constructing a numerical invariant---the Futaki invariant---associated to a holomorphic vector field preserving the submersion structure. The tools and ideas we develop will then be later reapplied to moment map results.

In fact, the results of the present section also apply in more generality than submersions. Endow  a fixed compact K\"ahler manifold $X$ with a  foliation, where a foliation is defined as  a holomorphic subbundle $\mathcal F\subset TX$ of the holomorphic tangent bundle. The key example is a holomorphic submersion $\pi: X \to B$, where the vertical subbundle $V \subset TX$ given by the kernel of the differential $d\pi$ defines a foliation. While a foliation has leaves which are complex submanifolds, and by Frobenius' theorem locally admits the structure of a submersion, in general foliations are not induced by submersions and do not necessarily have compact leaves. By analogy with our interest in the geometry of the base $B$ of a holomorphic submersion, we are interested in the transverse geometry of a foliation: the geometry orthogonal to the leafwise geometry. 

Postponing the precise definitions for the moment, here we are interested in assigning numerical invariants to holomorphic vector fields on $X$. Fixing such a vector field $v$,  suppose we are given a leafwise K\"ahler metric $\omega$ on $X$ (which, by definition, is a closed $(1,1)$-form which is positive on each leaf) and a transverse K\"ahler metric $\beta$, assumed invariant under the part of $v$ lying in a compact torus.  To this data we associate what we call the \emph{transverse Futaki invariant}, which is defined as a sum of $L^2$-pairings of the leafwise average scalar curvature with a fibrewise Hamiltonian, and the twisted transverse  average scalar curvature with a transverse Hamiltonian. In particular, this Futaki invariant vanishes when our system has leafwise constant scalar curvature and constant twisted transverse scalar curvature. Our goal in the present section is to prove, in Theorem \ref{thm:futaki-body}, that the transverse Futaki invariant depends only on the cohomology class of $\beta$. Thus, much as with the classical Futaki invariant, we obtain an equivariant-cohomological, algebro-geometric obstruction to the existence of solutions of our coupled system. We also provide two motivations for our definition of the transverse Futaki invariant, one arising from moment map theory, and the other arising from an adiabatic limit result.

\subsection{Transverse K\"ahler geometry}

We turn to the foundational setup of the K\"ahler geometry of foliations. Let $\cF \subset TX$ be a foliation of rank $m$ and co-rank $n$ (where our notation means that $\cF$ is a holomorphic subbundle of the holomorphic tangent bundle), so that $X$ has dimension $m+n$.  A closed form $\omega \in \Omega^{1,1}(X)$ is defined to be \emph{leafwise K\"ahler} on $(X,\cF)$ if the restriction of $\omega$ to $\cF {\oplus} \cF$ is positive. Given such a leafwise K\"ahler metric $\omega$ on $(X,\cF)$, the orthogonal complement $H \subset TX$ is well-defined, where the  fibre $H_x$ over $x \in X$ is given by
$$H_x = \{ w \in T_x X : \omega(w,v) =0 \textnormal{ for all } v \in \cF_x \}.$$
The splitting  $TX = \cF \oplus H$  naturally induces the  splitting  $\Omega^2(X) = \Omega_{\cF} \oplus \Omega_{H} \oplus \Omega_{\textnormal{mix}}$ of $2$-forms into purely leafwise, purely horizontal, and mixed type. For a two-form $\eta$ we denote 
$$
\eta = \eta_{\cF} + \eta_H + \eta_{\textnormal{mix}}
$$
 the decomposition of $\eta$ into these types. Observe that $\omega = \omega_{\cF} + \omega_{H}$; the mixed term vanishes as $\omega$ itself is used to define the splitting. Moreover,
$\Lambda^{m+n} TX \cong \Lambda^m \cF \otimes \Lambda^n H$. 
\begin{definition}
The \emph{leafwise Ricci curvature} $\rho$ of $\omega$ is the curvature of the metric induced by $\omega$ on $\Lambda^m \cF$. 
\end{definition}

Note that the mixed and horizontal components of $\rho$ may be nontrivial, while its purely vertical component is characterised by its restriction to each leaf, where it agrees with the Ricci curvature of the induced K\"ahler metric on the given leaf.

The definition of a foliation as a subbundle of the tangent bundle implies that there is a canonical notion of a transverse differential form: 

\begin{definition} A differential form $\beta \in \Omega^k (X)$ is  \emph{transverse} if $\iota_{v} \beta = 0$ and $\iota_{v} d\beta = 0$ for all vector fields $v \in \Gamma(X, \cF)$.\end{definition}

The condition that $\beta$ be transverse is equivalent to asking that $\iota_v\beta$ and $\mathcal L_v\beta$ both vanish.  Another general characterisation is that $\beta$ is transverse if and only if $\beta = \beta_H$, where we note it follows from the preceding characterisation that $\beta$ being transverse is independent of choice of the horizontal subspace $H$ defined by $\omega$. Transverse forms are sometimes also called basic forms, notably in the theory of fibre bundles and principal bundles. The set of transverse functions (i.e. functions $f$ with $v(f) = 0$ for all $v \in \Gamma(X, \cF)$, which simply means $f$ is constant along leaves) will be denoted $C^{\infty}_H (X)$. We denote by  $\Omega_H^k(X)\subset \Omega^k(X)$ the vector space of transverse differential forms.

\begin{definition}
A transverse $(1,1)$-form $\beta$ is said to be a \emph{transverse K\"ahler form} if $\beta(\cdot, J(\cdot))$ is positive-definite on $H\times H$ and $\beta$ is closed.
\end{definition}

The associated curvature quantity is as follows, where we view $H$ as a holomorphic vector bundle over $X$ by defining it as the quotient $TX/\cF$.

\begin{definition} The \emph{transverse Ricci curvature} $\Ric\beta$ of a transverse K\"ahler form $\beta$ is  defined to be the curvature of the induced metric  $\Lambda^nH$. The \emph{transverse scalar curvature} $S(\beta)=\Lambda_{\beta}(\Ric\beta)$ is defined to be the horizontal contraction of the transverse Ricci curvature.\end{definition}

Explicitly, the horizontal contraction is characterised  for $\eta\in \Omega^2(X)$ by $$(\Lambda_{\beta}\eta)\beta^n = n\eta_H\wedge\beta^{n-1},$$ so that the transverse scalar curvature is given by $S(\beta) = \Lambda_{\beta} \Ric\beta$. In addition, the $\partial \bar\partial$ lemma admits the following refinement in the foliated setting.

\begin{lemma} If $\beta, \beta'$ are transverse K\"ahler metrics with $[\beta] = [\beta']$, then $\beta - \beta' = i\partial \bar\partial \phi$ with $\phi \in C^{\infty}_H$.
\end{lemma}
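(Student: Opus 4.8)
The plan is to reduce the statement to the ordinary $\partial\bar\partial$-lemma on the compact Kähler manifold $X$ and then to show that the resulting potential is automatically transverse. Since $[\beta]=[\beta']$ in $H^2(X;\mathbb{R})$ and both are real closed $(1,1)$-forms on a compact Kähler manifold, the classical $\partial\bar\partial$-lemma produces a global real function $\psi\in C^\infty(X)$, a priori \emph{not} basic, with $\beta-\beta'=i\partial\bar\partial\psi$. Writing $\gamma:=\beta-\beta'$, the entire content of the lemma is then the upgrade: to show $\psi$ may be taken in $C^{\infty}_H(X)$, that is, constant along the leaves of $\mathcal{F}$.

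The mechanism for this upgrade is that $\gamma$ is a \emph{transverse} $(1,1)$-form, being a difference of transverse forms, so $\gamma=\gamma_H$ and in particular its purely leafwise component vanishes. I would exploit this one leaf at a time. Fix a leaf $L$, with inclusion $\iota_L\colon L\hookrightarrow X$; since $L$ is a complex submanifold with $TL=\mathcal{F}|_L$ and $\gamma$ is purely horizontal, the pullback $\iota_L^*\gamma$ vanishes identically. On the other hand, pullback to a complex submanifold commutes with $\partial$ and $\bar\partial$ separately, so $\iota_L^*(i\partial\bar\partial\psi)=i\partial\bar\partial(\psi|_L)$, computed with the induced complex structure on $L$. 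Combining the two gives $i\partial\bar\partial(\psi|_L)=0$ on every leaf, i.e.\ the restriction of $\psi$ to each leaf is pluriharmonic.

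It then remains to pass from leafwise pluriharmonicity to leafwise constancy, after which $v(\psi)=0$ for all $v\in\Gamma(X,\mathcal{F})$, giving $\psi\in C^{\infty}_H$ and hence $\beta-\beta'=i\partial\bar\partial\psi$ with basic potential, as required. In the case of primary interest here --- foliations with compact leaves, in particular those induced by holomorphic submersions --- this is immediate: contracting $i\partial\bar\partial(\psi|_L)=0$ with the leafwise Kähler form $\omega|_L$ shows that $\psi|_L$ is harmonic on the compact connected leaf $L$, hence constant by the maximum principle.

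The main obstacle is precisely this last step when the leaves are not compact: the maximum principle is then unavailable, one cannot expect a single global potential to be leafwise constant, and the argument must be replaced by the basic Hodge theory of the transversely Kähler foliation, substituting the classical $\partial\bar\partial$-lemma with its basic counterpart on the complex of transverse forms $\Omega^{\bullet}_H(X)$. For the applications in this paper the compact-leaf argument above suffices, so I would carry out the elliptic leafwise step in that setting and flag the general case as requiring the transverse Hodge-theoretic machinery.
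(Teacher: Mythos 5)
Your reduction to the classical $\partial\bar\partial$-lemma and the observation that the potential is pluriharmonic along every leaf are both correct, and they coincide with the paper's starting point. The genuine gap is the last step, passing from leafwise pluriharmonicity to leafwise constancy: your argument for this is the maximum principle on a \emph{compact} leaf, but the lemma is stated and used for arbitrary holomorphic foliations $\cF \subset TX$, and the paper stresses that such foliations ``do not necessarily have compact leaves.'' In particular your claim that ``for the applications in this paper the compact-leaf argument suffices'' is not accurate: this lemma underpins transverse K\"ahler potentials and hence the proof of cohomological invariance of the transverse Futaki invariant (Theorem \ref{thm:futaki-body}) and the foliation results of Section \ref{sec:foliations}, all of which are formulated for general foliations, not just those induced by submersions. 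Deferring the non-compact case to ``basic Hodge theory'' is also not a repair one gets for free: to invoke a basic $\partial\bar\partial$-lemma on the complex $\Omega^{\bullet}_H(X)$ you would first need $\beta-\beta'$ to be exact \emph{as a basic form}, whereas the hypothesis $[\beta]=[\beta']$ is an equality in $H^2(X;\R)$; bridging the two amounts to injectivity of the comparison map from basic to ordinary de Rham cohomology in degree two, which is a nontrivial extra input.

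The missing idea---and the paper's actual argument---is to integrate by parts over the compact ambient manifold $X$ rather than work leaf by leaf. Write $\beta-\beta' = \ddb\phi$ with $\phi$ smooth (classical $\partial\bar\partial$-lemma, as you did). Since $\ddb\phi$ is transverse, its purely leafwise component vanishes, so pointwise $\ddb\phi\wedge\omega^{m-1}\wedge\beta^n = 0$: in top degree, only the leafwise part of a $2$-form survives against $\omega^{m-1}\wedge\beta^n$. Hence
\[
0 \;=\; \int_X \phi\, \ddb\phi\wedge\omega^{m-1}\wedge\beta^n \;=\; -\int_X i\,\partial\phi\wedge\bar\partial\phi\wedge\omega^{m-1}\wedge\beta^n \;=\; -\int_X \bigl|(d\phi)_{\cF}\bigr|^2_{\omega}\,\omega^m\wedge\beta^n,
\]
which forces $(d\phi)_{\cF}\equiv 0$, i.e.\ $\phi\in C^{\infty}_H$. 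This uses only compactness of $X$ and closedness of $\omega$ and $\beta$, so it is completely insensitive to the geometry of the individual leaves and closes exactly the gap your approach leaves open.
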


 We note that usual $\partial \bar\partial$-lemma implies that such a $\phi$ exists, and as the difference of two transverse differential forms, it follows that $\ddb\phi\in \Omega^2_H(X)$.

\begin{proof} We must show that $d\phi = (d\phi)_H$, i.e. that $(d\phi)_{\cF} = 0$. Since $\ddb\phi$ is transverse, its vertical part is zero and hence
    $\ddb\phi \wedge \omega^{m-1} \wedge \beta^n = 0.$    We then calculate
    \begin{align*}
        \int_X \phi \ddb(\phi) \wedge \omega^{m-1} \wedge \beta^n &= - \int_X i \partial \phi \wedge \bar \partial \phi \wedge \omega^{m-1} \wedge \beta^n, \\ &=- \int_X |(d \phi)_{\cF}|_{\omega}^2 \omega^m \wedge \beta^n.
  \end{align*}
   As $\ddb\phi \wedge \omega^{m-1} \wedge \beta^n$ vanishes, it follows that $(d\phi)_{\cF}=0$, as required.
\end{proof}

Such transverse $\phi$ will be called transverse K\"ahler potentials. Given a transverse K\"ahler form $\beta$, the transverse Laplacian defined on functions by  $\Delta_{\beta} f  = \Lambda_{\beta} ( (\ddb f)_H)$ will play a central role. 
\begin{lemma}  The transverse Laplacian satisfies
$$
\Delta_{\beta} f \omega^m \wedge \beta^n = n \ddb f \wedge \omega^m \wedge \beta^{n-1}.
$$
\end{lemma}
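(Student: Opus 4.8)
The plan is to reduce everything to the two identities already available: the definition $\Delta_{\beta} f = \Lambda_{\beta}((\ddb f)_H)$ and the characterisation of the horizontal contraction, $(\Lambda_{\beta}\eta)\beta^n = n\,\eta_H \wedge \beta^{n-1}$ for $\eta \in \Omega^2(X)$. First I would apply the latter to the horizontal form $\eta = (\ddb f)_H$. Since the horizontal part of an already horizontal form is itself, $\bigl((\ddb f)_H\bigr)_H = (\ddb f)_H$, and this yields immediately the purely transverse identity
$$\Delta_{\beta} f \cdot \beta^n = n\,(\ddb f)_H \wedge \beta^{n-1}.$$
The remaining work is to wedge this with $\omega^m$ and to check, by a bidegree analysis, that the right-hand side may be rewritten with the full form $\ddb f$ in place of $(\ddb f)_H$.

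Next I would exploit the splitting $TX = \cF \oplus H$ and the decomposition $\omega = \omega_{\cF} + \omega_H$. The key degree observation is that $(\ddb f)_H \wedge \beta^{n-1}$ is already of top horizontal degree $2n$, so when wedging with $\omega^m = \sum_k \binom{m}{k}\omega_{\cF}^{m-k}\wedge\omega_H^{k}$ only the leafwise top term $\omega_{\cF}^m$ (the $k=0$ term) survives, every factor of $\omega_H$ exceeding the available horizontal degree. Wedging the displayed identity with $\omega^m$ therefore gives
$$\Delta_{\beta} f \cdot \omega^m \wedge \beta^n = n\,\omega_{\cF}^m \wedge (\ddb f)_H \wedge \beta^{n-1},$$
where on the left I have also used $\omega^m \wedge \beta^n = \omega_{\cF}^m \wedge \beta^n$ for the same reason, so that $\omega^m \wedge \beta^n$ is genuinely the product of the leafwise and transverse volume forms.

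It then remains to identify this with $n\,\ddb f \wedge \omega^m \wedge \beta^{n-1}$, i.e.\ to show that the non-horizontal parts of $\ddb f = (\ddb f)_{\cF} + (\ddb f)_H + (\ddb f)_{\mathrm{mix}}$ contribute nothing. The mixed term drops out for parity reasons: $(\ddb f)_{\mathrm{mix}}$ carries odd leafwise (real) degree, while every monomial of $\omega^m \wedge \beta^{n-1}$ carries even leafwise degree, so their product cannot reach the even top degree. I expect the genuinely delicate point to be the leafwise term $(\ddb f)_{\cF}\wedge\omega^m\wedge\beta^{n-1}$: here the $k=1$ contribution $m\,\omega_{\cF}^{m-1}\wedge\omega_H$ of $\omega^m$ can a priori pair with $(\ddb f)_{\cF}$ and $\beta^{n-1}$ to produce a top form, so this is the step where the interplay between the leafwise metric $\omega$ and the transverse metric $\beta$ must be controlled. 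In the relevant application $\Delta_{\beta}$ is applied to transverse functions $f \in C^{\infty}_H(X)$, which are constant along the leaves; for such $f$ one has $(\ddb f)_{\cF} = 0$ and $(\ddb f)_{\mathrm{mix}} = 0$, so $\ddb f = (\ddb f)_H$ identically and the identity is exact. I would therefore carry out the argument in that setting, recording that for a general function the statement is to be understood via its horizontal part, the obstruction being precisely the leafwise term isolated above.
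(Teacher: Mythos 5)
Your proof is correct, and at its core it follows the same route as the paper's: apply the defining identity $(\Lambda_{\beta}\eta)\,\beta^n = n\,\eta_H\wedge\beta^{n-1}$ to $\eta = (\ddb f)_H$, wedge with $\omega^m$, and reduce to replacing $(\ddb f)_H$ by $\ddb f$ against $\omega^m\wedge\beta^{n-1}$. The difference is that the paper's entire proof is the unjustified assertion of that replacement, whereas you actually test it against the type decomposition --- and your caution is warranted, because the assertion fails for general $f$. As you isolate, the mixed part of $\ddb f$ drops out by parity, but the leafwise part survives through the $k=1$ term of $\omega^m$:
$$
\ddb f\wedge\omega^m\wedge\beta^{n-1}-(\ddb f)_H\wedge\omega^m\wedge\beta^{n-1}
= m\,(\ddb f)_{\cF}\wedge\omega_{\cF}^{m-1}\wedge\omega_H\wedge\beta^{n-1}
= \tfrac{1}{n}\,(\Lambda_{\beta}\omega_H)\bigl(\Lambda_{\omega_{\cF}}(\ddb f)_{\cF}\bigr)\,\omega_{\cF}^{m}\wedge\beta^{n},
$$
which is nonzero in general: on a product $F\times B$ with $\omega=\omega_F+\omega_B$, $\beta=\omega_B$, and $f$ pulled back from $F$, the left-hand side of the lemma vanishes while the right-hand side equals $n\,(\Delta_F f)\,\omega_F^m\wedge\omega_B^n$. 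So the lemma as literally stated holds only when the leafwise Laplacian of $f$ vanishes, and your restriction to $f\in C^{\infty}_H(X)$ is precisely the implicit hypothesis under which the paper invokes it (Proposition \ref{adjCH} and Lemma \ref{h-0} apply $\Delta_{\beta}$ only to transverse functions). The one point you leave informal is the claim $(\ddb f)_{\cF}=(\ddb f)_{\mathrm{mix}}=0$ for transverse $f$; this is true because the foliation is holomorphic (in foliated holomorphic coordinates a transverse $f$ depends only on the transverse coordinates, so $\ddb f$ annihilates $\cF$), and it is the same fact the paper uses when asserting $\ddb\phi\in\Omega^2_H(X)$ for transverse potentials. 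In short: same approach, but your version supplies both the missing hypothesis and the missing verification.
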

\begin{proof}
By definition of the horizontal contraction, this is a consequence of the equality 
$$\ddb f \wedge \omega^m \wedge \beta^{n-1}= (\ddb(f))_H \wedge \omega^m \wedge \beta^{n-1}.$$\end{proof}

We also give an interpretation of $\Delta_{\beta}$ in terms of adjoints. Denote by $d^* : H^* \to C^{\infty}_H$ be the formal adjoint of $d$ with respect to $\beta$.

\begin{proposition}\label{adjCH}
    If $f \in C^{\infty}_H$, then $\Delta_{\beta} f = - \frac{1}{2} d^* d f$.
\end{proposition}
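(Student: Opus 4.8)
The plan is to establish the claimed identity $\Delta_{\beta} f = -\frac{1}{2} d^* d f$ for $f \in C^{\infty}_H$ by relating the horizontal Laplacian $\Delta_{\beta} f = \Lambda_\beta((\ddb f)_H)$ to the formal adjoint $d^*$ of $d$ taken with respect to $\beta$ on transverse forms. The essential point is that $f$ is constant along leaves, so $df = (df)_H$ is already a transverse $1$-form, and hence the formal adjoint $d^*: H^* \to C^{\infty}_H$ applied to $df$ is well-defined and lands in the transverse functions. The identity we want is therefore the foliated/transverse analogue of the classical Kähler identity $\Delta f = -\frac{1}{2} d^* d f$ relating the complex (contraction) Laplacian to the real (Hodge) Laplacian on functions.

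First I would use the preceding lemma, which computes $\Delta_{\beta} f\, \omega^m \wedge \beta^n = n\, \ddb f \wedge \omega^m \wedge \beta^{n-1}$, to write the pairing $\int_X (\Delta_\beta f)\, g\, \omega^m \wedge \beta^n$ against a test function $g \in C^{\infty}_H$ as a wedge-product integral. The second step is to integrate by parts: since $f, g$ are transverse and $\ddb f$ is a transverse $2$-form, I would move the $\partial\bar\partial$ across using $dd^c$-type integration by parts, converting $n\int_X g\, \ddb f \wedge \omega^m \wedge \beta^{n-1}$ into a term of the form $-\int_X \langle df, dg\rangle_\beta\, \omega^m \wedge \beta^n$ (up to the factor of $\tfrac{1}{2}$ coming from the $i\partial\bar\partial$ versus full $dd$ normalisation, exactly as in the classical Kähler case). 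The third step is to recognise that $\int_X \langle df, dg\rangle_\beta\, \omega^m \wedge \beta^n$ is, by definition of $d^*$ as the formal adjoint of $d$ with respect to $\beta$ on the transverse complex, equal to $\int_X (d^* df)\, g\, \omega^m \wedge \beta^n$. Comparing the two expressions for arbitrary transverse test functions $g$ yields $\Delta_\beta f = -\frac{1}{2} d^* d f$ pointwise.

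The main obstacle I anticipate is bookkeeping the interaction between the leafwise volume factor $\omega^m$ and the transverse structure: the adjoint $d^*$ is defined with respect to $\beta$ alone, yet all pairings naturally occur against the combined volume $\omega^m \wedge \beta^n$, so I must verify that the integration-by-parts step does not produce unwanted leafwise boundary or mixed terms. The key facts that control this are that $df$ is genuinely transverse (so $(\ddb f)_{\mathrm{mix}}$ and $(\ddb f)_{\cF}$ contribute nothing after wedging with $\omega^m \wedge \beta^{n-1}$, precisely as in the earlier lemma) and that $\omega^m$ is closed enough on each leaf that it passes through the integration by parts as a harmless factor. I would also need to fix the normalisation convention for $d^*$ so that the factor $\tfrac{1}{2}$ emerges correctly from the real-versus-complex Laplacian comparison; this is the only place where a numerical constant must be tracked carefully, and it is the standard source of the $\tfrac{1}{2}$ in the untwisted Kähler identity. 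Once these conventions are pinned down, the computation is a routine transverse adaptation of the classical argument.
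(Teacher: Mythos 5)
Your proof is correct, but it takes a genuinely different route from the paper's. The paper proves this proposition by its recurring adiabatic device: since $\omega_k = \omega + k\beta$ is a genuine K\"ahler metric for $k \gg 0$ (by compactness of $X$), one can invoke the classical K\"ahler identity $\int_X (\Delta_k f)\phi\,\omega_k^{m+n} = \int_X \langle d\phi, df\rangle_k\,\omega_k^{m+n}$ wholesale, expand both sides in powers of $k$ (the leading terms are of order $k^{n-1}$ precisely because $f$ and $\phi$ are transverse), and match the leading coefficients via $\frac{m+n}{n}\binom{m+n-1}{n-1} = \binom{m+n}{n}$. Your argument instead works directly in the transverse complex: the preceding lemma converts $\int_X(\Delta_\beta f)\,g\,\omega^m\wedge\beta^n$ into $n\int_X g\,\ddb f\wedge\omega^m\wedge\beta^{n-1}$, Stokes' theorem (using only closedness of $\omega$ and $\beta$) moves the $\ddb$ onto $g$, and pointwise horizontal K\"ahler linear algebra identifies the result with $-\tfrac{1}{2}\int_X\langle df,dg\rangle_\beta\,\omega^m\wedge\beta^n$; non-degeneracy of the $L^2$-pairing on $C^{\infty}_H$ then upgrades this to the pointwise identity, since both sides are transverse functions. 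The obstacles you flag are the right ones and are all controllable: when $f$ is transverse and $\cF$ is holomorphic, $\ddb f$ is itself a transverse $2$-form (check in foliated coordinates), so no mixed or leafwise terms survive against $\omega^m\wedge\beta^{n-1}$; and the factor $\tfrac{1}{2}$ is exactly the classical real-versus-complex Laplacian normalisation, which must also be read into the paper's somewhat implicit convention that $d^*$ is the adjoint for the fibre metric $\beta$ together with the volume form $\omega^m\wedge\beta^n$. What your route buys is a self-contained computation with no asymptotic bookkeeping; what the paper's route buys is that it rehearses, in the simplest possible instance, the expansion-in-$k$ strategy that is then reused for the linearisation results, the transverse Futaki invariant, and the infinite-dimensional moment map theorems --- which is why the paper frames the proof as ``a strategy we later employ several times.''
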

\begin{proof}
    We prove this as a consequence of the corresponding statement for K\"ahler metrics, which is a strategy we  later employ several times. At each point $x \in X$, the form $\beta_x$ is positive on $H_x$ and trivial on $V_x$, while $\omega_x$ is positive on $V_x$ and makes $H_x$ and $V_x$ orthogonal. It follows that there exists a $k\gg 0$ such that $\omega_x+k\beta_x$ is positive at $x$, and since $X$ is compact, $k$ can be taken independently of $x\in X$ to produce a K\"ahler metric $\omega_k = \omega + k \beta$.
    
    We may thus take  $k$ to be sufficiently large so that $\omega_k = \omega + k \beta$ is K\"ahler, and hence $\omega_{k'}$ is also K\"ahler for $k'>k$. Denoting $\Delta_k$ the Laplacian associated to $\omega_k$,   the usual identities in K\"ahler geometry imply for any $k$
   $$
        \int_X  (\Delta_k f) \phi \omega_k^{m+n}  = \int_X \langle d \phi, d f \rangle_k \omega_k^{m+n}.
   $$
 We  consider the asymptotic expansions in $k$ of the two sides of this equality, and obtain the required equality as the leading order non-zero term of the expansion. 
 
 The Laplacian satisfies
\begin{align*}
        \int_X  (\Delta_k f) \phi \omega_k^{m+n} &=  (m+n) \int_X \ddb f \phi \omega_k^{m+n-1} \\
    &=  (m+n) \binom{m+n-1}{n-1} k^{n-1} \int_X \ddb f \phi \omega^m \wedge \beta^{n-1} + O(k^{n-2})\\
    &=  \frac{m+n}{n} \binom{m+n-1}{n-1} k^{n-1} \int_X (\Delta_{\beta}f) \phi \omega^m \wedge \beta^n + O(k^{n-2}),
\end{align*}
    where the second line follows because $f$ is transverse. The inner product of gradients satisfies
    $$
        \langle d \phi, d f \rangle_k = k^{-1} \langle d \phi, df \rangle_{\beta} + O(k^{-2})$$
    for  $\phi$ and $f$ transverse, implying
    \begin{align*}
        \int_X \langle d \phi, d f \rangle_k \omega_k^{m+n} =&  \binom{m+n}{n} k^{n-1} \int_X  \langle d \phi, df \rangle_{\beta} \omega^m \wedge \beta^n + O(k^{n-2}).
    \end{align*}
    The result follows from the equality between $\binom{m+n}{n}$ and $\frac{m+n}{n} \binom{m+n-1}{n-1}$.
\end{proof}

\subsection{Transverse holomorphy potentials} We next incorporate holomorphic vector fields. Fix  a leafwise K\"ahler metric $\omega$, a holomorphic foliation $\cF \subset TX$, and a transverse K\"ahler metric $\beta$. Now set $k>0$ such that $\omega_k = \omega + k \beta$ is K\"ahler. We consider a holomorphic vector field $v$ whose Lie derivative preserves both $\omega$ and $\beta$ (thus the closure of the flow of the vector field induces a compact torus of biholomorphisms of $X$). A classical result in K\"ahler geometry then states that if $v$ is a holomorphic vector field on $X$ which vanishes somewhere, then it admits a \emph{holomorphy potential} with respect to $\omega_k$, namely a function $h_k$ such that $\iota_{v} \omega_k = \bar\partial h_k$, or equivalently $\nabla^{1,0}_k h_k = v$ where $\nabla_k$ is the usual gradient with respect to $\omega_k$ and we take the $(1,0)$-component.

\begin{definition}
\label{def:transverseholpot}
    We say that $h$ is a \emph{transversal holomorphy potential} for $v$ with respect to $\beta$ if $\iota_{v} \beta = \bar \partial h.$ 
\end{definition}

 Transversality of $\beta$ implies that the function  $h$ is  transverse, and hence that $h$ is unique up to the addition of a constant. It also follows that the condition for $h$ to be a transverse holomorphy potential can be written $\iota_{v} \beta = \bar\partial_H h$. Linearity of the usual holomorphy potential construction also implies that $h$ is a transverse holomorphy potential precisely if there exists a $k$ such that $h = h_{k+1} - h_k$, up to the addition of a  constant.

We also note that if $v$ and $v'$ are holomorphic vector fields on $X$ with zeros with the same transversal component, then their transverse holomorphy potentials agree up to addition of a constant, as follows from the fact that $\iota_{v} \beta = \iota_{v_H} \beta$ as $\beta$ is transverse. Next, we explain the dependence of the transverse holomorphy potential on the transverse K\"ahler metric.

\begin{lemma}
\label{lem:changeinholpot}
Let $\phi$ be a transverse K\"ahler potential. If $v$ has transverse holomorphy potential $h$ with respect to $\beta$, then $v$ has holomorphy potential $h+v(\phi)$ with respect to $\beta + \ddb\phi$.
\end{lemma}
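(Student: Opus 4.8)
The plan is to verify the defining relation of a transverse holomorphy potential directly: I must show that
\[
\iota_{v}(\beta + \ddb\phi) = \bar\partial\big(h + v(\phi)\big).
\]
Since contraction and $\bar\partial$ are linear, and the hypothesis supplies $\iota_{v}\beta = \bar\partial h$, the entire statement collapses to the single change-of-potential identity $\iota_{v}(\ddb\phi) = \bar\partial(v(\phi))$. Thus all the content of the lemma is concentrated in this one identity, and everything else is linearity and a check of transversality.

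To establish the identity I would follow the strategy already used in the proof of Proposition \ref{adjCH}, reducing to the genuinely K\"ahler case rather than computing in the foliated setting directly. Using the characterisation recorded above, write $h = h_{k+1} - h_{k}$, where $h_{j}$ is the usual K\"ahler holomorphy potential of $v$ with respect to $\omega_{j} = \omega + j\beta$, which is K\"ahler for $j \gg 0$. Replacing $\beta$ by $\beta + \ddb\phi$ replaces $\omega_{j}$ by $\omega_{j} + \ddb(j\phi)$, and the classical K\"ahler change-of-potential formula gives that the holomorphy potential of $v$ with respect to $\omega_{j} + \ddb(j\phi)$ is $h_{j} + v(j\phi) = h_{j} + j\,v(\phi)$. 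Subtracting the expressions for $j = k+1$ and $j = k$ then produces exactly $(h_{k+1} - h_{k}) + v(\phi) = h + v(\phi)$, which is the claim. As an independent cross-check one can verify the identity pointwise: since $v$ is holomorphic, $\mathcal{L}_{v}$ commutes with $\partial$ and $\bar\partial$, so Cartan's formula applied to the closed form $\ddb\phi$ gives $\mathcal{L}_{v}(\ddb\phi) = d\,\iota_{v}(\ddb\phi)$, while $\mathcal{L}_{v}(\ddb\phi) = \ddb(\mathcal{L}_{v}\phi) = \ddb(v(\phi))$; comparing types (using that $\iota_{v}(\ddb\phi)$ is already of type $(0,1)$) recovers the identity, and in local holomorphic coordinates both sides are seen to equal $\phi_{j\bar{k}}v^{j}\,d\bar{z}^{k}$ up to the normalising constant fixed by the convention $\iota_{v}\beta = \bar\partial h$.

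It then remains only to confirm that $h + v(\phi)$ is a legitimate transverse holomorphy potential, namely that it is transverse. In the reduction argument this is automatic, since the difference $h_{k+1}^{\beta'} - h_{k}^{\beta'}$ of K\"ahler potentials solving $\iota_{v}(\beta + \ddb\phi) = \bar\partial(\cdot)$ is forced to be transverse by the remark following Definition \ref{def:transverseholpot}, as $\beta + \ddb\phi$ is transverse. If one prefers a direct check, for $w \in \Gamma(X,\cF)$ one computes $w(v(\phi)) = v(w(\phi)) + [w,v](\phi) = 0$, using that $\phi$ is transverse (so $w(\phi) = 0$) and that $v$ preserves $\cF$ (so $[w,v] \in \Gamma(X,\cF)$ and therefore annihilates the transverse function $\phi$).

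I do not anticipate a genuine obstacle here: the lemma is the foliated avatar of the classical fact that a K\"ahler holomorphy potential changes by $v(\phi)$ under a K\"ahler-potential change, and the reduction to $\omega_{j} = \omega + j\beta$ transports this fact verbatim. The only points demanding care are bookkeeping ones, namely correctly tracking the constant implicit in the convention $\iota_{v}\beta = \bar\partial h$ together with $\ddb = i\partial\bar\partial$, and ensuring that the computation is carried out in a regime where the forms are honestly K\"ahler; the reduction argument handles the latter cleanly and sidesteps the former, since it invokes the standard K\"ahler statement in whatever normalisation is fixed throughout.
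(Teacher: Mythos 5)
Your proof is correct and its main argument---writing $h = h_{k+1}-h_k$ for the K\"ahler potentials of $\omega_j = \omega + j\beta$, applying the classical K\"ahler change-of-potential formula to $\omega_j + \ddb(j\phi)$, and subtracting---is exactly the paper's proof. The direct Cartan-formula cross-check and the explicit transversality verification are extra touches the paper leaves implicit (it only remarks ``or from a direct calculation''), but the core reduction is identical.
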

\begin{proof}
This follows from the analogous statement for K\"ahler metrics (or from a direct calculation). Indeed, choose $k$ sufficiently large so that both $\omega + k \beta$ and $\omega + k(\beta+\ddb\phi)$ are K\"ahler. If the holomorphy potential with respect to $\omega+k\beta$ is $h_k$, then the holomorphy potential with respect to $\omega+k(\beta+\ddb\phi)$ is $h_k+kv(\phi)$. Similarly, the holomorphy potentials with respect to $\omega + (k+1) \beta$ and $\omega + (k+1)(\beta+\ddb\phi)$ are $h_{k+1}$ and $h_{k+1}+(k+1)v(\phi)$, respectively. Thus, the holomorphy potential of $v$ with respect to $\beta + \ddb\phi$ is given by
\begin{align*}(h_{k+1}+(k+1)v(\phi)) - (h_{k}+(k)v(\phi)) &= h_{k+1}-h_k+v(\phi), \\ 
&= h + v(\phi).\end{align*}\end{proof}

  \subsection{The transverse Futaki invariant} We next define and motivate the definition of the transverse Futaki invariant. The motivation will be twofold: firstly, we will demonstrate that the transverse Futaki invariant appears as the leading order term in an asymptotic expansion of the usual Futaki invariant. Secondly, in the special case of submersions, we will relate the natural quantity arising from moment map theory to our transverse Futaki invariant.

    Before defining the quantities of interest, we require additional notation.   We let $S_{\cF}(\omega)$ denote the leafwise scalar curvature of $\omega_{\cF}$, which is the leafwise contraction of the leafwise Ricci curvature $\rho$ of $\omega$, and let 
    $$\hat{S}_{\cF} =\frac{m \int_X \rho \wedge \omega^{m-1} \wedge \beta^n}{\int_X \omega^{m} \wedge \beta^n}.$$  
    In principle, this quantity depends on the cohomology class of $\beta$, though we note it is independent of this choice in the setting of submersions as it is then the fibrewise average of the scalar curvature.  We additionally define a constant $\lambda$ by 
\begin{equation}\label{gammaeqn}
\lambda =   \frac{n\int_X ( \Ric \beta + \rho)\wedge \beta^{n-1}\wedge\omega^m}{\int_X \omega^m\wedge\beta^n}.
\end{equation}

Now write the holomorphy potential of $v \in \mfh$ with respect to $\omega+k\beta$ as 
\begin{equation}\label{h_k}
h_k = kh + h_\cF.
\end{equation}

    \begin{definition}\label{def:transversefutaki}
The \emph{transverse Futaki invariant} $\Fut: \mfh \rightarrow \C$ is defined by
\begin{align*}
 \Fut(v)&:= \int_X h_{\cF} \left(S_{\cF}(\omega)-\hat{S}_{\cF} \right) \omega^m \wedge \beta^n + \int_X h (\Lambda_{\beta}( \Ric(\beta) + \rho) - \lambda) \omega^m \wedge \beta^n \\
&\ \ \  \ + \frac{n}{m+1}\int_X h \left(S_{\cF}(\omega)-\hat{S}_{\cF} \right) \omega^{m+1} \wedge \beta^{n-1}.
\end{align*}
\end{definition}

\subsubsection{The transverse Futaki invariant through an adiabatic expansion}\label{sec:5.3.1} We next motivate our definition through the usual Futaki invariant of the K\"ahler metric $\omega_k=\omega + k \beta$  for $k \gg 0$, which is defined as $$\Fut_k(v) = \int_X h_k(S(\omega_k) - \hat{S}_k)\omega_k^{m+n},$$
where $\hat{S}_k$ denotes the average of the scalar curvature $S(\omega_k)$ of $\omega_k$.  Futaki's classical result is that $\Fut_k(v)$ depends on the class of $[\omega_k]$ (and in particular on $k$ itself), but not on the specific representative $\omega_k$. 

In order to understand the asymptotic behaviour of this quantity as $k \to \infty$, we first consider the scalar curvature.

\begin{lemma}
As $k \to \infty$, the scalar curvature of $\omega_k$ satisfies $$    S(\omega_k) = S_{\cF}(\omega) + k^{-1}\left(S(\beta) + \Lambda_{\beta} \rho_H + \Delta_{\cF} (\Lambda_{\beta}\omega)\right) + O(k^{-2}).$$
\end{lemma}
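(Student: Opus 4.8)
The plan is to exploit that $\omega_k := \omega + k\beta$ is a genuine K\"ahler metric for $k \gg 0$ and to compute its scalar curvature $S(\omega_k) = \Lambda_{\omega_k}\Ric(\omega_k)$ by expanding both the Ricci form and the contraction operator $\Lambda_{\omega_k}$ in powers of $k^{-1}$, then collecting orders. The key structural input is that, since $\beta$ is transverse (so $\iota_v\beta = 0$ for $v \in \cF$) and $H$ is the $\omega$-orthogonal complement of $\cF$, the splitting $TX = \cF \oplus H$ is orthogonal for $\omega_k$ as well. Thus in an adapted frame $\omega_k$ is block diagonal, with leafwise block $\omega_{\cF}$ (independent of $k$) and horizontal block $\omega_H + k\beta_H$.

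First I would expand the Ricci form. Writing $-K_X = \Lambda^m\cF \otimes \Lambda^n H$ and using that $\omega_k$ makes $\cF \perp H$, the induced metric on $-K_X$ factors as a tensor product of the induced metrics on the two factors. The metric on $\Lambda^m\cF$ coincides with the one induced by $\omega$ (as $\omega_k|_{\cF} = \omega|_{\cF}$), so its curvature is exactly $\rho$; hence $\Ric(\omega_k) = \rho + \Theta_k$, where $\Theta_k$ is the curvature of the metric induced by $\omega_H + k\beta_H$ on $\Lambda^n H$. A Schur-complement/determinant expansion of $\log\det(\omega_H + k\beta_H)$ on $\Lambda^n H$ then gives $\Theta_k = \Ric\beta - k^{-1}\ddb(\Lambda_{\beta}\omega) + O(k^{-2})$: the leading term is the transverse Ricci curvature, and the subleading coefficient is $\ddb$ of the function $\Lambda_{\beta}\omega$, the first-order term in the determinant expansion being precisely the $\beta$-trace $\mathrm{tr}_{\beta}\,\omega_H = \Lambda_\beta\omega$. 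In parallel I would expand the contraction: for any $(1,1)$-form $\eta = \eta_{\cF} + \eta_H + \eta_{\mathrm{mix}}$, block-diagonality of $\omega_k$ yields $\Lambda_{\omega_k}\eta = \Lambda^{\cF}_\omega\eta_{\cF} + k^{-1}\Lambda_\beta\eta_H + O(k^{-2})$, with the leafwise $\omega$-contraction $\Lambda^{\cF}_\omega$ carrying no $k$-dependence and the mixed part contributing nothing.

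Combining these and collecting orders then produces the claim. The constant-order term is $\Lambda^{\cF}_\omega\rho_{\cF} + \Lambda^{\cF}_\omega(\Ric\beta)_{\cF}$; the first summand is by definition $S_{\cF}(\omega)$, and the crucial point is that the second vanishes. This is where I expect the main work to lie: one must show that the transverse Ricci curvature $\Ric\beta$ is itself a transverse form, so that $(\Ric\beta)_{\cF} = 0$. I would verify this by passing to a foliated coordinate chart $(z,w)$ in which the leaves are $\{w = \mathrm{const}\}$; transversality of $\beta$ (that is, $\mathcal{L}_v\beta = 0$ for $v \in \cF$) forces the components $\beta_{\alpha\bar\beta}$ to be independent of the leaf coordinate $z$, so $\log\det(\beta_{\alpha\bar\beta})$ is basic and $\Ric\beta = -\ddb\log\det(\beta_{\alpha\bar\beta})$ is manifestly transverse. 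Granting this, the order $k^{-1}$ term assembles as $\Lambda_\beta\rho_H + \Lambda_\beta\Ric\beta + \Delta_{\cF}(\Lambda_\beta\omega)$: the first comes from the horizontal contraction of $\rho$, the second is $S(\beta)$ by the definition of transverse scalar curvature, and the third is the leafwise contraction of the subleading Ricci correction $-\ddb(\Lambda_\beta\omega)$, which is the leafwise Laplacian $\Delta_{\cF}(\Lambda_\beta\omega)$.

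The main obstacle is thus twofold: establishing the vanishing of the potentially dangerous constant-order cross term $\Lambda^{\cF}_\omega(\Ric\beta)_{\cF}$ via the basic-ness of the transverse Ricci form, and the careful bookkeeping of which terms in the two interacting expansions land at which order in $k$, including matching the leafwise Laplacian term (with its correct normalization) to the convention for $\Delta_{\cF}$. As an alternative to the frame computation, in the spirit of Proposition \ref{adjCH} one could instead derive the determinant and contraction expansions from the corresponding K\"ahler identities for $\omega_k$ and extract the leading-order terms; but the direct computation in an adapted frame seems most transparent here.
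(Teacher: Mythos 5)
Your proof is correct, and it supplies essentially the computation that the paper chooses not to write out: the paper's entire proof is the remark that the claim is pointwise, that a foliation is locally a metrised holomorphic submersion, and that for submersions the expansion is Fine's. Your route is the same calculation in substance (Fine's argument is precisely a Schur-complement/determinant expansion in adapted coordinates), but you carry it out intrinsically on the foliated manifold rather than reducing to the local submersion picture. The structural points you isolate are all valid: $\cF \oplus H$ remains $\omega_k$-orthogonal because $\iota_v\beta = 0$ for $v \in \cF$; the curvature of the induced metric on $-K_X \cong \Lambda^m\cF \otimes \Lambda^n(TX/\cF)$ splits as $\rho$ plus the curvature of the quotient metric, since $\omega_k|_{\cF} = \omega|_{\cF}$ and the quotient metric is the restriction of $\omega_k$ to $H$; the determinant expansion of the second factor gives $\Ric\beta - k^{-1}\ddb(\Lambda_\beta\omega) + O(k^{-2})$; the contraction expands as $\Lambda^{\cF}_{\omega}(\cdot)_{\cF} + k^{-1}\Lambda_\beta(\cdot)_H + O(k^{-2})$ with no mixed contribution; and transversality of $\Ric\beta$, proved in a foliated chart exactly as in the paper's $\mathbb{C}^2$ example in Section \ref{sec:foliations}, kills the constant-order cross term $\Lambda^{\cF}_{\omega}\bigl((\Ric\beta)_{\cF}\bigr)$. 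That last point is the one genuinely foliation-specific input, and your handling of it is what replaces the paper's appeal to locality: the paper's citation buys brevity, while your version buys a self-contained argument that never invokes the local submersion structure.

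The one delicate point, which you flag but do not resolve, is the sign convention in the Laplacian term. You identify the leafwise contraction of the correction $-\ddb(\Lambda_\beta\omega)$ with $+\Delta_{\cF}(\Lambda_\beta\omega)$, which is correct only if $\Delta_{\cF}$ denotes $-\Lambda^{\cF}_{\omega}\bigl((\ddb\,\cdot)_{\cF}\bigr)$, i.e.\ the nonnegative Laplacian. The paper's conventions elsewhere point the other way: Proposition \ref{adjCH} fixes $\Delta_\beta f = \Lambda_\beta\bigl((\ddb f)_H\bigr) = -\tfrac12 d^*df$, and the identities of Example \ref{ex:EDG} and Proposition \ref{prop:imP} use the same $\Lambda\ddb$ convention. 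With that convention your (correct) computation yields $-\Delta_{\cF}(\Lambda_\beta\omega)$ at order $k^{-1}$. One can confirm this on the explicit local model $\omega = \ddb\bigl(|z|^2(1+|w|^2)\bigr)$, $\beta = i\,dw\wedge d\bar w$, $\cF = \langle\partial_z\rangle$ on $\mathbb{C}^2$: there the exact $k^{-1}$ coefficient of $S(\omega_k)$ equals $-2(1+|w|^2)^{-2}$, while $S(\beta) + \Lambda_\beta\rho_H = -(1+|w|^2)^{-2}$ and $\Lambda^{\cF}_{\omega}\bigl(\ddb(\Lambda_\beta\omega)\bigr)_{\cF} = (1+|w|^2)^{-2}$, so the Laplacian term must enter with a minus sign under the $\Lambda\ddb$ convention. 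Thus either the displayed lemma carries the opposite (analyst's) sign convention for $\Delta_{\cF}$, or it has a sign typo; in either case the discrepancy is immaterial for the rest of the paper, since this term only ever appears integrated against transverse functions, where it is annihilated by self-adjointness of the leafwise Laplacian, as in Equation \eqref{Fut-expan} and Lemma \ref{h-0}.
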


\begin{proof}
This is a pointwise calculation which,  since foliations are locally submersions, follows from the analogous calculation for submersions originally due to Fine \cite{finecsck}.
\end{proof}

Similarly, by the definition given in Equation \eqref{gammaeqn} of $\lambda,$ the average scalar curvature of $\omega_k$  expands as 
$$
\hat{S}_k = \hat S_{\cF} + k^{-1} \lambda + O(k^{-2}).
$$
Using that $h_k = kh + h_\cF$  and that
$$\omega_k^{m+n} = \binom{m+n}{n} k^n \omega^m \wedge \beta^n + k^{n-1} \binom{m+n}{n-1} \omega^{m+1} \wedge \beta^{n-1} + O(k^{n-2}),
$$
we see that
\begin{align}\label{Fut-expan}
    \frac{\Fut_k(v)}{\binom{m+n}{n} k^n} 
&= \binom{m+n}{n}^{-1} k^{-n} \int_X h_k (S(\omega_k) - \hat S_k) \omega_k^{m+n}, \\
&= k\int_Xh (S_{\cF}(\omega) - \hat{S}_{\cF}) \omega^m \wedge \beta^n+ \int_X h_{\cF} (S_{\cF}(\omega) - \hat{S}_{\cF}) \omega^m \wedge \beta^n\nonumber\\
&\ \ + \int_X h \left(S(\beta) + \Lambda_{\beta} (\rho_H) - \lambda + \Delta_{V} (\Lambda_{\beta}(\omega)) \right) \omega^m \wedge \beta^n \nonumber\\
&\ \ + \frac{n}{m+1}\int_X h \left(S_{\cF}(\omega) - \hat{S}_{\cF}\right) \omega^{m+1} \wedge \beta^{n-1} + O(k^{-1}),  \nonumber\\
&= k\int_Xh (S_{\cF}(\omega) - \hat{S}_{\cF}) \omega^m \wedge \beta^n+  \int_X h_{\cF} (S_{\cF}(\omega) - \hat S_{\cF}) \omega^m \wedge \beta^n\nonumber \\
&\ \ + \int_X h \left(S(\beta) + \Lambda_{\beta} (\rho) - \lambda \right) \omega^m \wedge \beta^n \nonumber \\
&\ \ \ + \frac{n}{m+1}\int_X h \left(S_{\cF}(\omega) - \hat S_{\cF}\right) \omega^{m+1} \wedge \beta^{n-1} + O(k^{-1})\nonumber,
\end{align} 
where we have used that, since $h$ is transverse, its vertical Laplacian vanishes, along with a self-adjointness property of the vertical Laplacian.

Thus, the transverse Futaki invariant appears as a subleading order term in an asymptotic expansion. We next show that the leading order term $\int_X h (S_{\cF}(\omega) - \hat S_{\cF}) \omega^m \wedge \beta^n$ in the expansion actually vanishes automatically, meaning that the transverse Futaki invariant governs the Futaki invariant for $k\gg 0$.  This is essentially a consequence of the fact that $h$ is constant along leaves, which would trivially imply the result in the submersion setting, by combining the theorems of Ehresmann and Fubini. In the absence of a global submersion structure, we argue instead using the transverse Laplacian.

\begin{lemma}\label{h-0} The leading order term in the expansion of the Futaki invariant satisfies
    $$\int_X h (S_{\cF}(\omega) - \hat S_{\cF}) \omega^m \wedge \beta^n=0.$$
\end{lemma}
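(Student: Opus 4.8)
The plan is to reduce the claim to the assertion that $S_{\cF}(\omega) - \hat S_{\cF}$ is $L^2$-orthogonal, with respect to the volume $\omega^m\wedge\beta^n$, to the transverse function $h$, and then to establish this orthogonality by an integration by parts that exploits transversality of $h$. First I would rewrite the leafwise scalar curvature as
\[
S_{\cF}(\omega)\,\omega^m\wedge\beta^n = m\,\rho\wedge\omega^{m-1}\wedge\beta^n,
\]
observing that, since $\beta^n$ already saturates the $n$ available horizontal directions, only the purely leafwise components of $\rho$ and of $\omega$ survive the wedge; on each leaf this is just the defining relation between scalar and Ricci curvature. By the definition of $\hat S_{\cF}$ this already gives $\int_X (S_{\cF}(\omega) - \hat S_{\cF})\,\omega^m\wedge\beta^n = 0$, so the entire content of the lemma is the insertion of the weight $h$.

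The key identity I would isolate is that, for transverse $h$ and \emph{any} smooth $\phi$,
\[
m\int_X h\,\ddb\phi\wedge\omega^{m-1}\wedge\beta^n = m\int_X \phi\,\ddb h\wedge\omega^{m-1}\wedge\beta^n = 0,
\]
where the first equality is integration by parts over the compact $X$ using that $\omega$ and $\beta$ are closed, and the second holds because $\ddb h$ is transverse, so that $\ddb h\wedge\beta^n$ vanishes on horizontal-degree grounds. Equivalently, this is the self-adjointness of the leafwise Laplacian against $\omega^m\wedge\beta^n$ together with $\Delta_{\cF}h=0$ (as $h$ is constant along leaves), which is precisely the manipulation already used in the expansion \eqref{Fut-expan}. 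Consequently, if one can write $S_{\cF}(\omega) - \hat S_{\cF} = \Delta_{\cF}\phi$ for a global smooth $\phi$, the lemma follows immediately from $\int_X h\,\Delta_{\cF}\phi\,\omega^m\wedge\beta^n = 0$.

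In the submersion setting this solvability is immediate. By Ehresmann's theorem $\pi$ is a fibre bundle, and the topological fibrewise average $\hat S_b$ is constant in $b$ and equals $\hat S_{\cF}$, so that $\int_{X_b}(S(\omega_b)-\hat S_b)\,\omega_b^m = 0$ on every fibre. Indeed, since $h$ is fibrewise constant, Fubini then yields the vanishing directly, without even producing $\phi$. In the general foliated case I would instead run the transverse-Laplacian argument, passing to the K\"ahler metrics $\omega_k=\omega+k\beta$ and extracting the relevant order in $k$, exactly in the spirit of Proposition \ref{adjCH}, comparing the leafwise contraction against the genuine Laplacian $\Delta_k$ on the compact manifold $X$.

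The main obstacle is precisely this last step. In the absence of a global submersion structure the leaves may fail to be compact, so the leafwise Poisson equation and the notion of leafwise average are no longer governed by naive elliptic theory on a fixed compact leaf; one cannot simply invoke Fubini. The crux is therefore to show that the leafwise average of $S_{\cF}(\omega)$ is transversally constant, equivalently that $S_{\cF}(\omega)-\hat S_{\cF}$ lies in the image of $\Delta_{\cF}$, and this is where the transverse Laplacian, rather than the leafwise one, must do the work through the adiabatic comparison with $\Delta_k$. I expect the remaining verifications, namely the horizontal-degree vanishing of $\ddb h\wedge\beta^n$ and the bookkeeping of orders in the adiabatic expansion, to be routine once this orthogonality is secured.
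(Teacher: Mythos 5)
Your reduction is sound as far as it goes: the identity $\int_X h\,\ddb\phi\wedge\omega^{m-1}\wedge\beta^n = \int_X \phi\,\ddb h\wedge\omega^{m-1}\wedge\beta^n = 0$ for transverse $h$ is correct (the purely leafwise component $(\ddb h)_{\cF}$ vanishes because $h$ is constant on leaves, and all other components are killed by $\beta^n$ on degree grounds), and in the submersion case Ehresmann plus Fubini does finish the proof. But the lemma is stated and needed for general foliations, and there your argument has a genuine gap which you yourself flag and do not close: you need $S_{\cF}(\omega)-\hat S_{\cF}$ to lie in the image of the \emph{leafwise} Laplacian $\Delta_{\cF}$, and this is exactly the statement that is unavailable when leaves are non-compact or dense --- there is no leafwise elliptic theory, no well-defined leafwise average, and no reason for leaf-by-leaf solutions to assemble into a global smooth $\phi$. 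Your proposed fix, an adiabatic comparison with $\Delta_k$ ``in the spirit of Proposition \ref{adjCH},'' does not address this: that proposition concerns the transverse Laplacian $\Delta_{\beta}$ acting on transverse functions, and says nothing about solving a Poisson equation along the leaves.

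The paper's proof resolves this by dualizing your decomposition: instead of trying to write the \emph{leafwise} quantity $S_{\cF}(\omega)-\hat S_{\cF}$ as $\Delta_{\cF}\phi$, it writes the \emph{transverse} function $h$ as $c+\Delta_{\beta}\psi$ with $\psi\in C^{\infty}_H(X)$. This is legitimate because transversally elliptic operators do admit a Hodge decomposition on a compact foliated manifold (the cited result of El Kacimi gives $C^{\infty}_H = \ker\Delta_{\beta}\oplus \mathrm{im}\,\Delta_{\beta}$, and $\ker\Delta_{\beta}=\R$ by Proposition \ref{adjCH}); so the elliptic theory is invoked precisely on the side of the splitting where it exists. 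The constant $c$ contributes zero by the definition of $\hat S_{\cF}$, and the term $\int_X \Delta_{\beta}\psi\,(S_{\cF}(\omega)-\hat S_{\cF})\,\omega^m\wedge\beta^n$ is then shown to vanish by integration by parts, using $d\rho=0$ to trade $\bar\partial\rho_{\cF}$ for $-\bar\partial\rho_{mix}$ and a count of horizontal degrees against the corank of $\cF$. If you want to salvage your approach, this is the substitution to make: decompose $h$ transversally rather than $S_{\cF}(\omega)-\hat S_{\cF}$ leafwise; the integration-by-parts machinery you set up then goes through, but with $\Delta_{\beta}$ in place of $\Delta_{\cF}$ and with the extra step involving $\rho_{mix}$ that has no analogue in your outline.
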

\begin{proof}
We first claim that there is a constant $c$ and a transverse function $\psi$ such that $h=c + \Delta_{\beta}\psi$. By the main result in \cite{elKacimi-transverse}, transversal elliptic operators in general admit a Hodge decomposition, which in our setting gives that $C^{\infty}_H = \ker \Delta_{\beta} \oplus \mathrm{im } \Delta_{\beta}$ because $\Delta_{\beta}$ is self-adjoint. By Proposition \ref{adjCH}, $\ker \Delta_{\beta}$ consists of the constant functions only, and so we can write $h=c + \Delta_{\beta}\psi$, as claimed.

   It follows that our quantity of interest may be written as
    \begin{align*}
    \int_X \Delta_{\beta}\psi (S_{\cF}(\omega) - \hat S_{\cF}) \omega^m \wedge \beta^n &= n \int_X (S_{\cF}(\omega) - \hat S_{\cF}) \omega^m \wedge \ddb\psi \wedge \beta^{n-1},\\
 &=n \int_X i \bar \partial S_{\cF}(\omega) \wedge \omega^m \wedge \partial \psi \wedge \beta^{n-1},\\
&= n m \int_X i \bar \partial \rho_{\cF} \wedge \omega_{\cF}^{m-1} \wedge \partial \psi \wedge \beta^{n-1},\\
&=  -inm \int_X \bar \partial \rho_{mix} \wedge \omega_{\cF}^{m-1} \wedge \partial \psi \wedge \beta^{n-1},\\ 
&= -nm\int_X \rho_{mix} \wedge \omega_{\cF}^{m-1} \wedge \ddb\psi \wedge \beta^{n-1},\\
&= 0.
    \end{align*} Here, we use two facts: first, that the condition $d \rho = 0$  implies that  $d \rho_{\cF} = - d(\rho_{mix}+\rho_H)$, and the integral involving $d\rho_H$ vanishes automatically; second, since $\ddb\psi$ is purely horizontal, the total transverse components of the final integrand exceeds the corank of $\cF$. This proves the result.
\end{proof}

We have now shown the following:

\begin{theorem}
The Futaki invariant of $\omega_k$ satisfies \begin{align*}    \frac{\Fut_k(v)}{\binom{m+n}{n} k^n}  &= \int_X h_{\cF} (S_{\cF}(\omega) - \hat S_{\cF}) \omega^m \wedge \beta^n + \int_X h \left(S(\beta) + \Lambda_{\beta} (\rho) - \lambda \right) \omega^m \wedge \beta^n \\
&\ \ \ + \frac{n}{m+1}\int_X h \left(S_{\cF}(\omega) - \hat S_{\cF}\right) \omega^{m+1} \wedge \beta^{n-1} + O(k^{-1}).\end{align*}
\end{theorem}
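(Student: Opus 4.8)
The plan is to combine the asymptotic expansion of the Futaki invariant, which was carried out explicitly in Equation \eqref{Fut-expan}, with the vanishing of its leading order term established in Lemma \ref{h-0}. Indeed, the entire content of the theorem is to record the outcome of substituting the result of Lemma \ref{h-0} into the expansion \eqref{Fut-expan}. I would therefore structure the proof as a short synthesis: first recall that \eqref{Fut-expan} gives
\begin{align*}
\frac{\Fut_k(v)}{\binom{m+n}{n} k^n} &= k\int_X h (S_{\cF}(\omega) - \hat S_{\cF}) \omega^m \wedge \beta^n +  \int_X h_{\cF} (S_{\cF}(\omega) - \hat S_{\cF}) \omega^m \wedge \beta^n \\
&\ \ + \int_X h \left(S(\beta) + \Lambda_{\beta} (\rho) - \lambda \right) \omega^m \wedge \beta^n \\
&\ \ + \frac{n}{m+1}\int_X h \left(S_{\cF}(\omega) - \hat S_{\cF}\right) \omega^{m+1} \wedge \beta^{n-1} + O(k^{-1}),
\end{align*}
and then observe that the $O(k)$ term is precisely the integral appearing in Lemma \ref{h-0}, which vanishes.

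The only genuine subtlety is bookkeeping of orders in $k$: the leading term in \eqref{Fut-expan} is multiplied by $k$, so its vanishing is what allows the remaining terms to constitute the $O(1)$ coefficient rather than being swamped. After invoking Lemma \ref{h-0} to kill the $k$-term, the next three integrals are exactly those in the statement, and everything else is absorbed into $O(k^{-1})$. I would also note that the identity $S(\beta) = \Lambda_{\beta}\Ric\beta$ lets one write $S(\beta) + \Lambda_{\beta}\rho = \Lambda_{\beta}(\Ric\beta + \rho)$, which is the form in which the transverse Futaki invariant is phrased in Definition \ref{def:transversefutaki}, so the theorem indeed exhibits $\Fut(v)$ as the $O(1)$ coefficient of the expansion.

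There is essentially no hard analytic step remaining, since the two inputs — the pointwise expansion of the scalar curvature and the transverse Hodge-theoretic vanishing — have already been proved. The main point to be careful about is that Lemma \ref{h-0} eliminates only the leading $O(k)$ term and not any of the subleading contributions; one must confirm that no further cancellation is claimed and that the three surviving integrals match Definition \ref{def:transversefutaki} on the nose. Thus the proof is simply: substitute the conclusion of Lemma \ref{h-0} into the expansion \eqref{Fut-expan} and read off the result.
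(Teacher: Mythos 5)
Your proposal is correct and is essentially identical to the paper's own argument: the paper derives the expansion \eqref{Fut-expan}, proves Lemma \ref{h-0} to kill the $O(k)$ term, and then states this theorem with the words ``We have now shown the following,'' i.e.\ the proof is precisely the substitution you describe. Your bookkeeping remarks (that only the leading term is cancelled, and that $S(\beta)+\Lambda_\beta\rho = \Lambda_\beta(\Ric\beta+\rho)$ recovers the form in Definition \ref{def:transversefutaki}) accurately reflect what the paper does implicitly.
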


\subsubsection{The transverse Futaki invariant through moment maps}
We give a second motivation for our definition of the tranverse Futaki invariant, by comparing it with the moment map geometry of submersions,  namely through Theorem \ref{thm:findimmomentmap}. This result implies the existence of an analogue of the Futaki invariant by general moment map theory: one simply pairs the moment map with a holomorphic vector field on $\pi: X\to B$ itself. We call this quantity the \emph{submersion Futaki invariant}.

\begin{proposition}\label{lem:samefut}
Suppose $\cF$ is the foliation associated to a submersion $\pi: X \to B$. Then the submersion Futaki invariant agrees with the transverse Futaki invariant.
\end{proposition}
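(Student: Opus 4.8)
The strategy is to compute the submersion Futaki invariant directly from Theorem \ref{thm:findimmomentmap} and match it term-by-term against Definition \ref{def:transversefutaki}, using the dictionary that relates the submersion data to the foliation data. In the submersion setting $\cF = V = \ker d\pi$, so the leafwise scalar curvature $S_{\cF}(\omega)$ is precisely the fibrewise scalar curvature $S(\omega_b)$, the transverse K\"ahler metric $\beta$ plays the role of $\omega_B$, and the Weil--Petersson form $\alpha_{\pi}$ is the object built from $\rho$ and the fibrewise data. The submersion Futaki invariant is obtained by pairing the moment map $\sigma_{\gamma}$ of Theorem \ref{thm:findimmomentmap} with a holomorphic vector field on $X$ preserving the submersion structure; equivalently, one specialises the moment map formula to the case where the base of the family is a point, so that the two integrals in Theorem \ref{thm:findimmomentmap} collapse to integrals over $B$ against the holomorphy potentials $\langle \mu_X, v\rangle$ and $\langle \mu_B, v\rangle$.

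First I would set up the correspondence between Hamiltonians and holomorphy potentials: the comoment $\langle \mu_X, v\rangle$ restricts on each fibre to the fibrewise holomorphy potential, which under the adiabatic decomposition $h_k = kh + h_{\cF}$ corresponds to $h_{\cF}$, while $\langle \mu_B, v\rangle$ corresponds to the transverse holomorphy potential $h$. With this identification, the first integral in the moment map pairing becomes $\int_X h_{\cF}(S_{\cF}(\omega) - \hat S_{\cF})\,\omega^m \wedge \beta^n$, matching the first term of $\Fut(v)$ verbatim. Second, I would unwind the twisted term $\int_{B}\langle \mu_B, v\rangle(S(\omega_B) - \Lambda_{\omega_B}\alpha_{\pi} - \hat S_{\pi})\,\omega_B^n$ using the definition of $\alpha_{\pi}$ and of the horizontal contraction. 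Expanding $\Lambda_{\beta}\alpha_{\pi}$ through the definition of the Weil--Petersson form should produce the combination $\Lambda_{\beta}(\Ric\beta + \rho)$ together with the fibrewise-average correction, thereby reproducing both the second term $\int_X h(\Lambda_{\beta}(\Ric\beta + \rho) - \lambda)\,\omega^m \wedge \beta^n$ and the normalising constant $\lambda$ from Equation \eqref{gammaeqn}. The constant $\hat S_{\pi}$ should be checked to match $\lambda$ up to the conventions fixed in the two definitions.

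The main obstacle I expect is the appearance of the third term $\frac{n}{m+1}\int_X h(S_{\cF}(\omega) - \hat S_{\cF})\,\omega^{m+1}\wedge\beta^{n-1}$ in $\Fut(v)$, which has no manifest counterpart among the two integrals of Theorem \ref{thm:findimmomentmap}. This term arises from the $\frac{\hat S_b}{m+1}\int_{X/B}\omega^{m+1}$ piece hidden inside the definition of the Weil--Petersson form $\alpha_{\pi}$, so tracking it requires carefully differentiating the averaging constants and keeping the subleading contribution of $\alpha_{\pi}$ when it is contracted against $\beta$ and paired with $h$. The delicate point is that $\Lambda_{\omega_B}\alpha_{\pi}$ involves the full Weil--Petersson form, whose average-scalar-curvature piece contributes to the second integral in a way that separates into a term proportional to $h\,\Lambda_\beta(\Ric\beta + \rho)$ and a term proportional to $h\,(S_{\cF}(\omega)-\hat S_{\cF})$ integrated against the mixed volume $\omega^{m+1}\wedge\beta^{n-1}$. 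I would therefore expect the crux of the proof to be an honest bookkeeping of these averaging constants, showing that the moment-map formula reorganises exactly into the three terms of Definition \ref{def:transversefutaki}, with the self-adjointness of the relevant Laplacians and the vanishing established in Lemma \ref{h-0} used to discard the terms that do not contribute.
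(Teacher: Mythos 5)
Your proposal is correct and follows essentially the same route as the paper's proof: pairing the moment map of Theorem \ref{thm:findimmomentmap} with the vector field, applying the dictionary $\langle\mu_X,v\rangle = h_{\cF}$, $\langle\mu_B,v\rangle = h$, $\omega_B = \beta$, then expanding $\Lambda_{\omega_B}\alpha_{\pi}$ via the type decomposition of $\rho$ to produce both the $\Lambda_{\beta}(\Ric\beta+\rho)$ term and the mixed-volume term $\frac{n}{m+1}\int_X h(S_{\cF}(\omega)-\hat S_{\cF})\omega^{m+1}\wedge\beta^{n-1}$, and finally invoking a Lemma \ref{h-0}-type vanishing to identify $\hat S_{\pi}$ with $\lambda$. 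You correctly isolated the two genuinely delicate points (the origin of the third term and the matching of the normalising constants), which are exactly where the paper's computation concentrates.
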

\begin{proof}
From Theorem  \ref{thm:findimmomentmap}, the submersion Futaki invariant is given by 
    \begin{equation}\label{submFutinv}
    \int_{B}\left(\int_{X/B}\langle\mu_{X},v\rangle(S(\omega_b) - \hat S_b)\omega_{X}^m\right)\omega_{B}^n + \int_{B} \langle\mu_{B},v\rangle (S(\omega_B) - \Lambda_{\omega_B}\alpha_{\pi} - \hat S_{\pi})\omega_{B}^n.
\end{equation}
Note that in the foliation notation used in this section, $\langle\mu_X,v\rangle =h_{\cF}$, $\langle\mu_{B},v\rangle = h$, $\omega_X = \omega$, $\omega_B=\beta$, $\hat S_b = \hat S_{\cF}$ and that 
\begin{align*}
    \hat S_{\pi} &= \frac{\int_{B}(S(\beta) - \Lambda_{\beta} \alpha_{\pi})\beta^n}{\int_{B}\beta^n}.
\end{align*}

 Hence in our setting
\begin{eqnarray}\label{1st} 
\int_{B}\left(\int_{X/B}\langle\mu_{X},v\rangle(S(\omega_b) - \hat S_b)\omega_{X}^m\right)\omega_{B}^n
= \int_X h_{\cF} (S_{\cF}(\omega) - \hat S_{\cF}) \omega^m \wedge \beta^n
\end{eqnarray}
and
\begin{eqnarray}\label{2nd}
\int_{B} \langle\mu_{B},v\rangle (S(\omega_B) - \hat S_{\pi})\omega_{B}^n = \int_X h \left(S(\beta) - \hat S_{\pi} \right) \omega^m \wedge \beta^n.
\end{eqnarray}
We will later show that in fact $\hat S_{\pi} = \lambda$, but this relies upon the analysis of the final terms.

We next consider 
$ - \int_{B} \langle\mu_{B},v\rangle \Lambda_{\omega_B}\alpha_{\pi} \omega_{B}^n,
$
where we recall 
\begin{align*}
\alpha_{\pi} &= -\int_{X/B}\rho_{X}\wedge\omega_{X}^m + \frac{\hat S_b}{m+1}\int_{X/B}\omega_{X}^{m+1} \\
&= -\int_{X/B} \rho \wedge \omega^m + \frac{\hat S_{\cF}}{m+1} \int_{X/B} \omega^{m+1}.
\end{align*}
The latter of these terms gives rise to the quantity
\begin{align*}
- \int_{B} \langle\mu_{B},v\rangle \Lambda_{\omega_B} \left(\frac{\hat S_b}{m+1}\int_{X/B}\omega_{X}^{m+1} \right) \omega_{B}^n =& - n \int_{B} h \left(\frac{\hat S_{\cF}}{m+1}\int_{X/B}\omega^{m+1} \right) \beta^{n-1}, \\
=& -\frac{n}{m+1} \int_{X} h \hat S_{\cF} \omega^{m+1} \wedge \beta^{n-1}
\end{align*}
as appears in the definition of the transverse Futaki invariant. Finally, we claim that the term 
$$-\int_{B}\langle\mu_{B},v\rangle \Lambda_{\omega_B} \left(\int_{X/B}\rho_{X}\wedge\omega_{X}^m \right) \omega_{B}^n$$ may be written as 
\begin{align}\label{3rd}
\int_X h \Lambda_{\beta} (\rho) \omega^m \wedge \beta^n + \frac{n}{m+1}\int_X h S_{\cF}(\omega) \omega^{m+1} \wedge \beta^{n-1}.
\end{align}
This follows by using the splitting $\rho_X = \rho_{\cF} + \rho_H + \rho_{mix}$ into its purely vertical, purely horizontal and mixed components, which gives that
\begin{align*}
 n \rho_{X}\wedge\omega_{X}^m \wedge \omega_{B}^{n-1} &= n  \rho_{H}\wedge \omega_{\cF}^m \wedge \beta^{n-1} + nm  \rho_{\cF}\wedge \omega_{\cF}^{m-1} \wedge \omega_H \wedge \beta^{n-1}, \\
&= \Lambda_{\beta}(\rho_{H}) \omega_{\cF}^m \wedge \beta^{n} +  n \Lambda_{\omega_{\cF}}(\rho_{\cF})  \omega_{\cF}^{m} \wedge \omega_H \wedge \beta^{n-1}, \\
&= \Lambda_{\beta}(\rho_{H}) \omega^m \wedge \beta^{n} +  \frac{n}{m+1} S_{\cF}(\omega)  \omega^{m+1} \wedge \beta^{n-1}. 
\end{align*}
Integrating over $X$ against $h$ gives \eqref{3rd}.

A consequence of this is that
$$
- \int_{B} \langle\mu_{B},v\rangle \Lambda_{\omega_B}\alpha_{\pi} \omega_{B}^n = \int_X h \Lambda_{\beta} (\rho) \omega^m \wedge \beta^n + \frac{n}{m+1}\int_X h (S_{\cF}(\omega)-\hat S_{\cF}) \omega^{m+1} \wedge \beta^{n-1}.
$$
Moreover, we see that
\begin{align*}
    \hat S_{\pi} =&\frac{\int_{X}(S(\beta) - \Lambda_{\beta} \alpha_{\pi}) \omega^m \wedge \beta^n}{\int_{X}\omega^m \wedge \beta^n} 
\end{align*}
so that 
$$
\hat S_{\pi} - \lambda = \frac{n}{m+1}\int_X h (S_{\cF}(\omega)-\hat S_{\cF}) \omega^{m+1} \wedge \beta^{n-1}
$$
Similarly to Lemma \ref{h-0}, since $S_{\cF}(\omega) - \hat S_{\cF}$ is in the image of the vertical Laplacian, we see that $\int_X h (S_{\cF}(\omega)-\hat S_{\cF}) \omega^{m+1} \wedge \beta^{n-1} =0$ 
and so this give that $\hat S_{\pi}=\lambda$. Combining the above with \eqref{1st} and \eqref{2nd} we see that the expression in Equation \eqref{submFutinv} equals the the transverse Futaki invariant.

\end{proof}

\subsection{Cohomological invariance of the tranverse Futaki invariant}
 We next turn to one of the main results of our work,  which is as follows.

 \begin{theorem}\label{thm:futaki-body}
The transverse Futaki invariant only depends on the cohomology class of the transverse K\"ahler form $\beta$.
\end{theorem}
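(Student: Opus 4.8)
The plan is to show that the transverse Futaki invariant $\Fut(v)$ is unchanged when $\beta$ is replaced by a cohomologous transverse K\"ahler metric $\beta' = \beta + \ddb\phi$, with $\phi \in C^\infty_H$ a transverse K\"ahler potential. The key technical input is Lemma \ref{lem:changeinholpot}: the transverse holomorphy potential transforms as $h \mapsto h + v(\phi)$, while the leafwise holomorphy potential $h_\cF$ changes in a correspondingly computable way. I would first introduce a one-parameter path $\beta_t = \beta + t\,\ddb\phi$ for $t \in [0,1]$, each term of which is transverse K\"ahler, and aim to prove that $\frac{d}{dt}\Fut(v)|_{\beta_t} = 0$. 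This reduces a global comparison to the pointwise variational computation of the three integrals in Definition \ref{def:transversefutaki}, which is the standard strategy in Futaki's original argument.

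The main computation is to differentiate each of the three terms. I would need the variations under $\dot\beta = \ddb\phi$ of the relevant curvature quantities: the transverse scalar curvature $S(\beta) = \Lambda_\beta \Ric\beta$, the volume form $\omega^m \wedge \beta^n$, the contractions $\Lambda_\beta(\Ric\beta + \rho)$, and the constants $\hat S_\cF$ and $\lambda$. The transverse Ricci curvature varies by $\delta \Ric\beta = -\ddb(\Delta_\beta \phi)_H$ (the transverse analogue of the K\"ahler formula $\delta\Ric = -\ddb(\Delta\phi)$), and $\delta(\Lambda_\beta \eta) = -\langle \ddb\phi, \eta\rangle_\beta + \ldots$ type terms; here the adjoint interpretation of $\Delta_\beta$ from Proposition \ref{adjCH} and the Laplacian identity $\Delta_\beta f\,\omega^m\wedge\beta^n = n\ddb f \wedge \omega^m \wedge \beta^{n-1}$ will let me integrate by parts in the transverse directions. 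The variation $\delta h = v(\phi)$ of the transverse holomorphy potential (from Lemma \ref{lem:changeinholpot}) enters the terms where $h$ multiplies a curvature quantity, and I expect the derivative $\frac{d}{dt}\Fut$ to reorganise, after integration by parts, into an expression in which $v$ acts as a transverse holomorphic vector field, so that contributions combine and cancel.

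The main obstacle, and the place where the foliated setting genuinely departs from the classical one, is that one cannot freely integrate by parts in all directions: many cancellations that are automatic on a K\"ahler manifold must instead be established using transverse Hodge theory and the fact that $h$, $\phi$ are constant along leaves. In particular the interplay between the purely horizontal, mixed, and leafwise components of $\rho$ is delicate, exactly as in the proof of Lemma \ref{h-0}, where the mixed component $\rho_{mix}$ and the relation $d\rho = 0$ were used to kill a term by a degree/corank count. I anticipate needing the same devices: after integration by parts, the offending terms will carry more transverse differentials than the corank $n$ permits, forcing them to vanish, while the remaining terms cancel because $v$ is transversally holomorphic (so $v(\phi)$ pairs against curvature in a way governed by the linearised equation). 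An alternative, perhaps cleaner route I would keep in reserve is to bypass the direct variation entirely and instead invoke the adiabatic expansion of Section \ref{sec:5.3.1}: since $\Fut(v)$ is (up to the vanishing leading term established in Lemma \ref{h-0}) the subleading coefficient of the \emph{classical} Futaki invariant $\Fut_k(v)$ of $\omega_k = \omega + k\beta$, and since Futaki's theorem guarantees $\Fut_k(v)$ depends only on $[\omega_k] = [\omega] + k[\beta]$, every coefficient in its asymptotic expansion in $k$ depends only on the classes $[\omega]$ and $[\beta]$; extracting the coefficient of $k^{-1}$ then yields cohomological invariance of $\Fut(v)$ in $[\beta]$ for free. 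I would likely present the adiabatic argument as the primary proof, as it leverages the already-established classical invariance and sidesteps the hardest transverse integration-by-parts steps, and reserve the direct variational computation as confirmation.
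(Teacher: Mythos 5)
Your proposal is correct, but the route you designate as primary is genuinely different from the paper's. The paper proves Theorem \ref{thm:futaki-body} by the direct variational computation you hold in reserve: it differentiates $\Fut_{\beta+t\ddb\phi}(v)$ at $t=0$, shows that the derivative of the first and third terms collapses (after type and corank counts) to $nm\int_X i h_{\cF}\,\bar\partial(\rho_{\cF})\wedge\omega^{m-1}\wedge\partial\phi\wedge\beta^{n-1}$, inserts the linearisation of Proposition \ref{prop:lin}, and then cancels everything using the identity of Proposition \ref{prop:imP}, which pairs the linearised operator $P$ against the transverse holomorphy potential $h$ and a $\rho_{mix}$ term. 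Your sketch of this route names the right ingredients (Lemma \ref{lem:changeinholpot}, Proposition \ref{adjCH}, degree counts as in Lemma \ref{h-0}) but not the decisive cancellation, which is precisely Proposition \ref{prop:imP}. Your adiabatic argument, by contrast, is sound and is \emph{not} how the paper argues: the expansion of Section \ref{sec:5.3.1} together with Lemma \ref{h-0} gives $\Fut(v)=\lim_{k\to\infty}\binom{m+n}{n}^{-1}k^{-n}\Fut_k(v)$, and since each $\Fut_k(v)$ depends only on $[\omega+k\beta]=[\omega]+k[\beta]$ by Futaki's classical theorem (quoted in the paper), replacing $\beta$ by a cohomologous $\beta+\ddb\phi$ leaves every $\Fut_k(v)$, hence the limit, unchanged; the points to verify are that the $O(k^{-1})$ error is genuine (it is, by Fine's uniform expansion of $S(\omega_k)$) and that Lemma \ref{h-0} holds for every transverse K\"ahler representative (it does, as its proof uses only transverse Hodge theory for the given $\beta$), and there is no circularity since Section \ref{sec:5.3.1} precedes and is independent of the theorem. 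Comparing the two: your route is shorter, reuses already-established results, and in fact proves slightly more---the identical limit argument gives invariance under cohomologous changes of the leafwise form $\omega$ as well, since these too preserve $[\omega+k\beta]$. The paper's direct computation is chosen deliberately (as its introduction explains) because it produces the linearisation and cancellation machinery of Propositions \ref{prop:lin} and \ref{prop:imP}, which is then reused in the infinite-dimensional moment map arguments of Section \ref{sec:subinfdim}; the adiabatic shortcut, while valid, would not generate those tools. One minor slip: the invariant is the coefficient of $k^{0}$ in $\binom{m+n}{n}^{-1}k^{-n}\Fut_k(v)$ (it is the coefficient of $k^{-1}$ only after normalising by $k^{n+1}$ instead), though your phrase ``subleading coefficient'' shows you intend the correct term.
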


We begin with some preliminary results concerning the linearisation of the transverse scalar curvature. Key to this will be the  transverse version of the classical Lichnerowicz operator. Before defining this operator, we begin with a basic definition.

\begin{definition}
    Let $\beta$ be a transverse K\"ahler form. The \emph{transverse gradient} of a function $f$ is defined to be the unique vector field $\nabla_{\beta} f \in \Gamma(X,H)$  satisfying
    $$\beta(\nabla_{\beta} f, \cdot) = (df)_H.$$
\end{definition}

In what follows, for a section $v$ of the smooth tangent bundle of $X$, we denote by $v^{1,0}$ its $(1,0)$-component, which is a section of the holomorphic tangent bundle $TX$ of $X$.

 \begin{definition}
    The \emph{transverse Lichnerowicz operator} $L_{\beta}$  is defined by 
    $  L_{\beta} f = \cuD_{\beta}^* \cuD_{\beta} f$,     where    $ \cuD_{\beta} f = \bar \partial (\nabla^{1,0}_{\beta} f)$  and the adjoint $\cuD_{\beta}^*$ is taken with respect to $\beta$.
\end{definition}

A transverse holomorphy potential satisfies $v_H=\nabla^{1,0}_{\beta} h$, and since $v$ is holomorphic,  
    $\bar \partial (\nabla^{1,0}_{\beta} h) = 0$. Thus, by self-adjointness, the kernel of the transverse Lichnerowicz operator consists precisely of transverse holomorphy potentials.

We aim to understand how the various terms appearing in the definition of the transverse Futaki invariant, namely Definition \ref{def:transversefutaki}, depend on the transverse K\"ahler structure. We thus modify the transverse K\"ahler metric $\beta$ by adding $t\ddb \phi$ and we will be interested in the derivative of various quantities when $t=0$. Being a pointwise result, the derivation of the following is a consequence of the analogous result in the submersion setting, see e.g. \cite{hashimoto-twistedcscK,keller-twisted,dervansektnan-fibrations}. 
\begin{proposition} \label{prop:lin}
The linearisation of the operator
$$
\phi \mapsto \Lambda_{\beta+\ddb\phi}( \Ric(\beta+\ddb\phi) + \rho_H) 
$$
at zero is the operator $C^{\infty}_H (X) \to C^{\infty}_H (X)$ given by
$$
\phi \mapsto - L_{\beta} \phi + \langle \rho_H, \ddb\phi \rangle_{\beta} + \frac{1}{2} \langle \nabla_{\beta} S(\beta), \nabla_{\beta} \phi \rangle_{\beta}.
$$
\end{proposition}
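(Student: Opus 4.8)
The plan is to exploit the same pointwise/local reduction used throughout this section, deducing the transverse linearisation from the classical linearisation of the (twisted) scalar curvature in the genuine Kähler setting. The operator in question is pointwise in nature, and since by Frobenius a foliation is locally a submersion, over a local transversal $\beta$ restricts to an honest Kähler metric while $\rho_H$ restricts to a fixed background $(1,1)$-form; alternatively one can run the adiabatic argument with $\omega_k = \omega + k\beta$ as in Proposition \ref{adjCH} and extract the leading-order transverse operator. Either way, it suffices to compute the linearisation of $\phi \mapsto \Lambda_{\beta+\ddb\phi}(\Ric(\beta+\ddb\phi) + \rho_H)$ as a map $C^{\infty}_H (X) \to C^{\infty}_H (X)$ using the standard Kähler formulae applied in the horizontal directions, and then to check that the resulting operators are the transverse ones defined above.

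First I would split the operator as
$$\Lambda_{\beta+\ddb\phi}(\Ric(\beta+\ddb\phi) + \rho_H) = S(\beta+\ddb\phi) + \Lambda_{\beta+\ddb\phi}\rho_H,$$
so that its linearisation is the sum of the linearisations of the two summands. For the first summand, the transverse scalar curvature $S(\beta+\ddb\phi)$, I would invoke the transverse analogue of the classical scalar curvature linearisation (the content of \cite{hashimoto-twistedcscK,keller-twisted,dervansektnan-fibrations} in the submersion setting): its derivative at $\phi=0$ is $-L_{\beta}\phi + \tfrac{1}{2}\langle \nabla_{\beta} S(\beta), \nabla_{\beta}\phi\rangle_{\beta}$, whose fourth-order leading part is precisely the transverse Lichnerowicz operator $L_{\beta} = \cuD_{\beta}^*\cuD_{\beta}$ and whose first-order part is the transport term along the transverse gradient of $S(\beta)$. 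A point to verify here is that the leading part is genuinely $L_{\beta}$ and not a twisted variant; this holds because the twist $\rho_H$ enters only through the second summand and contributes no fourth-order term.

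For the second summand I would use that $\rho_H$ is independent of $\phi$: the splitting $TX = \cF \oplus H$ and the leafwise Ricci form $\rho$ are determined by the fixed data $(\omega,\cF)$, so $\rho_H$ does not vary with $\beta$. Differentiating the horizontal contraction of this fixed $(1,1)$-form against the varying inverse transverse metric then yields a second-order term of the form $\langle \rho_H, \ddb\phi\rangle_{\beta}$, with the sign dictated by differentiating the inverse transverse metric. Summing the two contributions gives the stated operator
$$\phi \mapsto - L_{\beta} \phi + \langle \rho_H, \ddb\phi \rangle_{\beta} + \tfrac{1}{2} \langle \nabla_{\beta} S(\beta), \nabla_{\beta} \phi \rangle_{\beta}.$$

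The main obstacle is not the individual derivatives, which are standard once one is in the Kähler picture, but rather justifying cleanly that the objects produced by the local submersion computation are the correct global transverse operators, namely that the adjoint $\cuD_{\beta}^*$, the transverse gradient $\nabla_{\beta}$, and the horizontal contraction $\Lambda_{\beta}$ all patch from their local descriptions. This is exactly where the transverse Kähler identities and the Hodge-theoretic framework of \cite{elKacimi-transverse} are needed: one must check that the transverse analogues of the Kähler identities hold so that the local formulae assemble into the claimed global transverse operator, and that all adjoints are taken with respect to $\beta$ on $C^{\infty}_H(X)$, consistently with the identity $\Delta_{\beta} f = -\tfrac{1}{2} d^* d f$ of Proposition \ref{adjCH}. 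Once this bookkeeping is in place, the pointwise reduction delivers the result.
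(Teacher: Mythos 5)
Your proposal is correct and follows essentially the same route as the paper: the paper likewise observes that the claim is pointwise, reduces via the local submersion structure of a foliation to the known linearisation formulae for the (twisted) scalar curvature in the K\"ahler/submersion setting \cite{hashimoto-twistedcscK,keller-twisted,dervansektnan-fibrations}, and assembles the result from the derivative of $S(\beta_t)$ together with the derivative of the contraction of the fixed form $\rho_H$. Your extra care about patching the local computations into the globally defined operators $L_{\beta}$, $\nabla_{\beta}$, $\Lambda_{\beta}$ is harmless but not needed beyond what the paper does, since these operators are defined globally and the identity to be checked is pointwise.
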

As in the submersion case, this follows from the following:
\begin{align}
\frac{d}{dt}\bigg|_{t=0}\left(\omega_{\cF}^m \wedge (\beta+\ddb \phi)^n\right) &=  (\Delta_{\beta}\phi ) \omega_{\cF}^m \wedge \beta^n, \label{eq:volderivative} \\
\frac{d}{dt}\bigg|_{t=0}\left( S(\beta_t)\right) &= - \Delta_{\beta}^2(\phi) + \langle \Ric(\beta), \ddb(\phi) \rangle_{\beta}.
\end{align}

 Next we compute the derivatives of the global quantities appearing in the transverse Futaki invariant.

\begin{lemma} Set  $h_t = h + tv(\phi)$ and $\beta_t = \beta + \ddb \phi$ for $\phi \in C^{\infty}_H(X)$. The derivative 
$$\frac{d}{dt}\bigg|_{t=0}\left(\int_X h_{\cF} (S_{\cF}(\omega) - \hat S_{\cF}) \omega^m \wedge \beta_t^n  +\frac{n}{m+1}\int_X h_{t} (S_{\cF}(\omega) -\hat S_{\cF}) \omega^{m+1} \wedge \beta_t^{n-1}\right)$$
is given by 
\begin{equation}\label{e2}
n m \int_X i h_{\cF} \bar \partial ( \rho_{\cF} ) \wedge \omega^{m-1} \wedge \partial \phi \wedge \beta^{n-1}. 
\end{equation}
\end{lemma}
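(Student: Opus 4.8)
The plan is to differentiate the two integrals in $t$, identify precisely which factors vary, and then integrate by parts so as to recognise \eqref{e2}. First I would record the $t$-dependence. The leafwise potential $h_{\cF}$ is the holomorphy potential of $v$ with respect to $\omega$ (it satisfies $\iota_v\omega = \bar\partial h_{\cF}$, as one sees by matching powers of $k$ in $\iota_v\omega_k=\bar\partial h_k$), and as such it is independent of the transverse representative, hence constant in $t$; likewise $\hat S_{\cF}$ depends only on the class $[\beta_t]=[\beta]$ and so is constant along the path. By Lemma \ref{lem:changeinholpot} the transverse potential varies as $h_t=h+t\,v(\phi)$, while $\beta_t=\beta+t\,\ddb\phi$. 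Thus in the first integral only $\beta_t^n$ varies, and in the second both $h_t$ and $\beta_t^{n-1}$ vary.

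Differentiating at $t=0$ then produces three contributions. The derivative of the first integral, computed using \eqref{eq:volderivative}, is $\int_X h_{\cF}(S_{\cF}(\omega)-\hat S_{\cF})(\Delta_{\beta}\phi)\,\omega^m\wedge\beta^n$; the product rule on the second integral gives a term $\frac{n}{m+1}\int_X v(\phi)(S_{\cF}(\omega)-\hat S_{\cF})\,\omega^{m+1}\wedge\beta^{n-1}$ from $\dot h_t=v(\phi)$, and a term $\frac{n(n-1)}{m+1}\int_X h(S_{\cF}(\omega)-\hat S_{\cF})\,\omega^{m+1}\wedge\ddb\phi\wedge\beta^{n-2}$ from $\tfrac{d}{dt}\beta_t^{n-1}$. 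The middle term vanishes immediately: since $h_t-h=t\,v(\phi)$ is a difference of transverse holomorphy potentials, $v(\phi)$ is transverse, and $S_{\cF}(\omega)-\hat S_{\cF}$ lies in the image of the vertical Laplacian, so the same mechanism used in Lemma \ref{h-0} and in the proof of Proposition \ref{lem:samefut} forces $\int_X v(\phi)(S_{\cF}(\omega)-\hat S_{\cF})\,\omega^{m+1}\wedge\beta^{n-1}=0$.

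The heart of the argument is the remaining pair of terms. Converting the first one by the transverse Laplacian identity $\Delta_{\beta}\phi\,\omega^m\wedge\beta^n=n\,\ddb\phi\wedge\omega^m\wedge\beta^{n-1}$ and integrating by parts once against the closed form $\omega^m\wedge\beta^{n-1}$, exactly as in Lemma \ref{h-0}, I would move one $\bar\partial$ off $\phi$ onto $h_{\cF}(S_{\cF}(\omega)-\hat S_{\cF})$, obtaining $n\int_X i\,\bar\partial\big(h_{\cF}(S_{\cF}(\omega)-\hat S_{\cF})\big)\wedge\partial\phi\wedge\omega^m\wedge\beta^{n-1}$. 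The piece where $\bar\partial$ lands on $S_{\cF}(\omega)$ is converted, via the leafwise identity $\bar\partial S_{\cF}(\omega)\wedge\omega^m=m\,\bar\partial\rho_{\cF}\wedge\omega_{\cF}^{m-1}$ together with the type-counting used at the end of Lemma \ref{h-0}, into precisely the claimed expression \eqref{e2}.

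It remains to show that all leftover contributions cancel, namely the piece $n\int_X i(S_{\cF}(\omega)-\hat S_{\cF})\,\bar\partial h_{\cF}\wedge\partial\phi\wedge\omega^m\wedge\beta^{n-1}$ together with the surviving $\frac{n(n-1)}{m+1}\int_X h(S_{\cF}(\omega)-\hat S_{\cF})\,\omega^{m+1}\wedge\ddb\phi\wedge\beta^{n-2}$. Here I would use the potential relations $\iota_v\omega=\bar\partial h_{\cF}$ and $\iota_v\beta=\bar\partial h$ (Definition \ref{def:transverseholpot}) to express both terms through the horizontal component $v_H$ of $v$, integrate by parts the $\ddb\phi$-term, and match them degreewise after decomposing every factor into its leafwise, horizontal and mixed parts. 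I expect this bookkeeping to be the main obstacle: one must keep track of the several boundary-free integration-by-parts terms while $\omega_{\cF}$ and $\omega_H$ are not individually closed, and exploit the holomorphy of $v$ (so that $\bar\partial\nabla^{1,0}_{\beta}h=0$) to force the cancellation. The final answer involving only $h_{\cF}$, and not the transverse potential $h$, is the structural signal that these contributions must indeed annihilate one another.
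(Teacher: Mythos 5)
Your overall skeleton (differentiate, convert $\Delta_\beta\phi$ via the transverse Laplacian identity, integrate by parts, extract \eqref{e2} from the piece where $\bar\partial$ hits $S_{\cF}(\omega)$) matches the paper's proof, but there is a genuine gap at the step you dispose of first: the claim that the middle term
$\frac{n}{m+1}\int_X v(\phi)\,(S_{\cF}(\omega)-\hat S_{\cF})\,\omega^{m+1}\wedge\beta^{n-1}$
``vanishes immediately'' by the mechanism of Lemma \ref{h-0} and Proposition \ref{lem:samefut}. That mechanism is tied to the measure $\omega^m\wedge\beta^n$: in Lemma \ref{h-0} one writes the transverse function as $c+\Delta_\beta\psi$ and uses that $\Delta_\beta\psi\,\omega^m\wedge\beta^n=n\,\ddb\psi\wedge\omega^m\wedge\beta^{n-1}$ together with type counting, and the self-adjointness of the vertical Laplacian likewise requires the horizontal weight in the volume form to be constant along leaves. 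Here the measure is $\omega^{m+1}\wedge\beta^{n-1}$, and pointwise
$\omega^{m+1}\wedge\beta^{n-1}=\tfrac{m+1}{n}\,(\Lambda_\beta\omega)\,\omega^m\wedge\beta^n$,
where $\Lambda_\beta\omega=\Lambda_\beta\omega_H$ is \emph{not} leafwise constant. So your integral is the $L^2$-pairing of the fibrewise-mean-zero function $S_{\cF}(\omega)-\hat S_{\cF}$ against $v(\phi)\,\Lambda_\beta\omega$, which is not leafwise constant; this is a Weil--Petersson-type quantity with no reason to vanish (already the analogous pairing $\int_X (S_{\cF}(\omega)-\hat S_{\cF})\,\omega^{m+1}\wedge\beta^{n-1}$ is nonzero in general, which is exactly why the Weil--Petersson form is nontrivial). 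Transversality of $v(\phi)$ alone does not save you.

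This error then propagates into your final step. In the paper this term is never discarded: after the integration by parts, the leftover piece $n\int_X i(S_{\cF}(\omega)-\hat S_{\cF})\,\bar\partial h_{\cF}\wedge\omega^m\wedge\partial\phi\wedge\beta^{n-1}$ is rewritten using $\bar\partial h_{\cF}=\iota_v\omega$, hence $\bar\partial h_{\cF}\wedge\omega^m=\tfrac{1}{m+1}\iota_v(\omega^{m+1})$, and the contraction identity $\iota_v\bigl(\omega^{m+1}\wedge\partial\phi\wedge\beta^{n-1}\bigr)=0$; this splits it into a term $\tfrac{n}{m+1}\int_X i(S_{\cF}(\omega)-\hat S_{\cF})(\iota_v\partial\phi)\,\omega^{m+1}\wedge\beta^{n-1}$, which cancels \emph{precisely against} the $v(\phi)$-term, plus a term involving $\partial\phi\wedge\bar\partial h\wedge\beta^{n-2}$ which combines with your $(n-1)$-piece and dies by a further integration by parts and type counting. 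Consequently, your two remaining claims are not independent: granting the paper's identities, the sum of your ``leftover'' pieces ($\bar\partial h_{\cF}$-piece plus $(n-1)$-piece) equals exactly minus the $v(\phi)$-term you threw away. So either both of your claims hold or neither does, and since the $v(\phi)$-term is nonzero in general, the degreewise matching you defer to the end cannot close. The repair is structural, not bookkeeping: retain the $v(\phi)$-term and carry out the cancellation against the $\iota_v\partial\phi$ contraction term as in the paper.
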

Note that $h_t = h + tv(\phi)$ is the potential with respect to $\beta_t$, by Lemma \ref{lem:changeinholpot}.
\begin{proof}
From Equation \eqref{eq:volderivative} we have
\begin{equation}\label{e1}
\frac{d}{dt}\bigg|_{t=0}\bigg(h_{\cF} (S_{\cF}(\omega) - \hat S_{\cF}) \omega^m \wedge \beta_{t}^n\bigg) = h_{\cF} (S_{\cF}(\omega) - \hat S_{\cF}) \Delta_{\beta}(\phi) \omega^m \wedge \beta^n, 
\end{equation}
and similarly
\begin{align*}
\frac{d}{dt}\bigg|_{t=0}\bigg(h_{t} (S_{\cF}(\omega) -\hat S_{\cF}) \omega^{m+1} \wedge \beta_{t}^{n-1}\bigg) &=  v(\phi) (S_{\cF}(\omega) -\hat S_{\cF}) \omega^{m+1} \wedge \beta^{n-1} \\
    &+ (n-1) h (S_{\cF}(\omega) -\hat S_{\cF}) \omega^{m+1} \wedge \ddb(\phi) \wedge \beta^{n-2}.
\end{align*}

 Equation \eqref{e1} produces
    \begin{align*}
     \int_X h_{\cF} (S_{\cF}(\omega) - \hat S_{\cF}) (\Delta_{\beta}\phi) \omega^m \wedge \beta^n  &= n \int_X h_{\cF} (S_{\cF}(\omega) - \hat S_{\cF}) \omega^m \wedge \ddb \phi \wedge \beta^{n-1}, \\
&= n\int_X i \bar \partial ( h_{\cF} (S_{\cF}(\omega) - \hat S_{\cF})) \wedge \omega^m \wedge \partial \phi \wedge \beta^{n-1}, \\
&= n\int_X i  (S_{\cF}(\omega) - \hat S_{\cF}) \bar \partial ( h_{\cF}) \wedge \omega^m \wedge \partial \phi \wedge \beta^{n-1} \\
&+ n\int_X i h_{\cF} \bar \partial (  S_{\cF}(\omega) ) \wedge \omega^m \wedge \partial \phi \wedge \beta^{n-1}.
    \end{align*}
Using that  $\bar \partial (  S_{\cF}(\omega) ) \wedge \omega^m = \bar \partial (  S_{\cF}(\omega) \omega^m )$
and  $\phi$ is horizontal shows, up to a factor of $n$,  that the second of these terms is 
\begin{align*}
    \int_X i h_{\cF} \bar \partial (  S_{\cF}(\omega) ) \wedge \omega^m \wedge \partial \phi \wedge \beta^{n-1} &= \int_X i h_{\cF} \bar \partial (  S_{\cF}(\omega)  \omega_{\cF}^{m}) \wedge \partial \phi \wedge \beta^{n-1} \\
    &= m \int_X i h_{\cF} \bar \partial (  \rho_{\cF} \wedge  \omega_{\cF}^{m-1}) \wedge \partial \phi \wedge \beta^{n-1} \\
    &= m \int_X i h_{\cF} \bar \partial ( \rho_{\cF} ) \wedge \omega^{m-1} \wedge \partial \phi \wedge \beta^{n-1}.
\end{align*}
On the other hand, for the first term we have 
    \begin{align*}
        \int_X i  (S_{\cF}(\omega) - \hat S_{\cF}) \bar \partial ( h_{\cF}) \wedge \omega^m \wedge \partial \phi \wedge \beta^{n-1} & =  \int_X \frac{i}{m+1}  (S_{\cF}(\omega) - \hat S_{\cF}) (\iota_{v}\partial \phi) \omega^{m+1} \wedge \beta^{n-1} \\
        &\ \ +\int_X \frac{i(n-1)}{m+1}  (S_{\cF}(\omega) - \hat S_{\cF}) \omega^{m+1} \wedge \partial \phi \wedge \bar \partial h \wedge \beta^{n-2}.
    \end{align*}
    After multiplying with $n$, the first of these cancels with the term  
    $$ \frac{n}{m+1}\int_X v(\phi) (S_{\cF}(\omega) -\hat S_{\cF}) \omega^{m+1} \wedge \beta^{n-1}$$
    in $\frac{d}{dt}\bigg|_{t=0}\bigg(h_{\phi} (S_{\cF}(\omega) -\hat S_{\cF}) \omega^{m+1} \wedge \beta_{t}^{n-1}\bigg)$. 
    
    We are thus left with the terms
\begin{align*}
&n m \int_X i h_{\cF} \bar \partial ( \rho_{\cF} ) \wedge \omega^{m-1} \wedge \partial \phi \wedge \beta^{n-1}
\end{align*}
and
\[
\int_X \frac{in(n-1)}{m+1}  (S_{\cF}(\omega) - \hat S_{\cF}) \omega^{m+1} \wedge \partial \phi \wedge \bar \partial h \wedge \beta^{n-2} 
+ \frac{n(n-1)}{m+1} \int_X h (S_{\cF}(\omega) -\hat S_{\cF}) \omega^{m+1} \wedge \ddb\phi \wedge \beta^{n-2}.
\]
    We would like to show that the latter vanishes. Integrating 
    \begin{align*}
        \frac{n(n-1)}{m+1} \int_X h (S_{\cF}(\omega) -\hat S_{\cF}) \omega^{m+1} \wedge \ddb\phi \wedge \beta^{n-2}
    \end{align*}
    by parts and cancelling the term involving the derivative of $h$ with the first term in the equation above leaves the term 
        \begin{align*}
        \frac{n(n-1)}{m+1} \int_X h \bar \partial (S_{\cF}(\omega))\wedge \omega^{m+1} \wedge \partial \phi \wedge \beta^{n-2},
    \end{align*}
    as $\hat S_{\cF}$ is a global constant. Since $\omega$ is closed, $\bar \partial (S_{\cF}(\omega))\wedge \omega^{m+1} = \bar \partial (S_{\pi}(\omega)) \omega^{m+1}$. Decomposing into vertical and horizontal types we therefore see that this term is equal to 
    \begin{align*}
        m n(n-1) \int_X h \bar \partial (\rho_{\cF})\wedge \omega_{\cF}^{m-1} \wedge \omega_H \wedge \partial \phi \wedge \beta^{n-2}.
    \end{align*}
    Since $\rho$ is closed, a short computation similar to Lemma \ref{h-0} implies that this term vanishes again by considering the vertical and horizontal types. 
\end{proof}

We next relate the linearised operator to transverse holomorphy potentials.

\begin{proposition}
\label{prop:imP}
Let $P$ be the operator given by
$$
\phi \mapsto L_{\beta}(\phi)-\langle \rho_H, \ddb(\phi) \rangle_{\beta} - \frac{1}{2} \langle \nabla_{\beta} (\Lambda_{\beta}\rho_H),  \nabla_{\beta}(\phi) \rangle_{\beta}.
$$
If $h$ is a transverse holomorphy potential for a vector field $v$ then 
$$
\int_X P(\phi) h \omega^m \wedge \beta^n + n\int_X  \rho_{mix} \wedge  \iota_{v} \omega^{m} \wedge i \partial \phi \wedge \beta^{n-1} = 0.
$$
\end{proposition}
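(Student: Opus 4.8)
The plan is to pair $\int_X P(\phi)\,h\,\omega^m\wedge\beta^n$ term by term, exploiting that a transverse holomorphy potential $h$ lies in the kernel of the transverse Lichnerowicz operator. Writing $P(\phi)=L_\beta(\phi)-\langle\rho_H,\ddb\phi\rangle_\beta-\tfrac12\langle\nabla_\beta(\Lambda_\beta\rho_H),\nabla_\beta\phi\rangle_\beta$, the Lichnerowicz contribution satisfies
$$\int_X (L_\beta\phi)\,h\,\omega^m\wedge\beta^n=\int_X\langle\cuD_\beta\phi,\cuD_\beta h\rangle_\beta\,\omega^m\wedge\beta^n=0,$$
by the definition $L_\beta=\cuD_\beta^*\cuD_\beta$ (the adjoint taken with respect to $\beta$ and the volume $\omega^m\wedge\beta^n$) together with $\cuD_\beta h=\bar\partial(\nabla_\beta^{1,0}h)=0$, which is exactly the characterisation of transverse holomorphy potentials recorded after Definition~\ref{def:transverseholpot}. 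This is the only place the hypothesis on $h$ enters, and it reduces the claim to an identity in the two lower-order terms of $P$.

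Next I would combine those two terms. Using the pointwise transverse Kähler identity $\langle\rho_H,\ddb\phi\rangle_\beta\,\beta^n=(\Lambda_\beta\rho_H)(\Delta_\beta\phi)\,\beta^n-n(n-1)\,\rho_H\wedge\ddb\phi\wedge\beta^{n-2}$ (valid since $\phi$ is transverse, so $\ddb\phi$ is horizontal), wedged with $\omega^m$, I would split the contraction term into its trace part and a primitive wedge part. Integrating the trace part by parts, via the transverse Laplacian identity and the self-adjointness of $\Delta_\beta$ from Proposition~\ref{adjCH}, produces two pieces: one in which the derivative lands on $\Lambda_\beta\rho_H$ and one in which it lands on $h$. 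The gradient term $-\tfrac12\langle\nabla_\beta(\Lambda_\beta\rho_H),\nabla_\beta\phi\rangle_\beta$ in $P$ is engineered precisely to cancel the first piece. This is the transverse analogue of the cancellation, in the Kähler case, of the divergence of the Ricci form against the contracted second Bianchi identity $\nabla^j\Ric_{j\bar k}=\tfrac12\partial_{\bar k}S$. What survives is a multiple of $\int_X h\,\rho_H\wedge\ddb\phi\wedge\omega^m\wedge\beta^{n-2}$ together with a term $\tfrac12\int_X(\Lambda_\beta\rho_H)\,\langle\nabla_\beta h,\nabla_\beta\phi\rangle_\beta\,\omega^m\wedge\beta^n$.

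The decisive step is then a final integration by parts that transfers $\bar\partial$ onto $\rho_H$ and onto $h$, followed by the two structural inputs: $\iota_v\beta=\bar\partial h$, which turns the derivative of $h$ into a contraction by $v$, and closedness $d\rho=0$, which rewrites the horizontal derivative $\bar\partial\rho_H$ as $-\bar\partial(\rho_\cF+\rho_{mix})$. The purely leafwise contribution coming from $\rho_\cF$ vanishes by a bidegree count identical to that in Lemma~\ref{h-0}, since the total transverse degree of the integrand would exceed the corank of $\cF$; only the mixed contribution from $\rho_{mix}$ survives. Tracking types shows that the sole component able to saturate the $2m$ leafwise directions against $\rho_{mix}\wedge\beta^{n-1}\wedge\partial\phi$ is the leafwise contraction appearing in $\iota_v\omega^m$, leaving exactly $-n\int_X\rho_{mix}\wedge\iota_v\omega^m\wedge i\partial\phi\wedge\beta^{n-1}$, which rearranges to the asserted identity.

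The main obstacle is the bidegree bookkeeping in the foliated setting and the fact that $\rho_H$ is merely the horizontal component of the closed leafwise Ricci form $\rho$, rather than an intrinsic transverse Ricci curvature. Consequently the transverse contracted Bianchi identity fails, and its failure—dictated entirely by $d\rho=0$—is what manufactures the mixed term and, with it, the leafwise contraction $\iota_v\omega^m$ in place of the transverse $\iota_v\beta$. Since $\cF$ is not assumed to come from a submersion, one cannot argue leafwise-then-transversely by Fubini; every integration by parts must instead be justified through the transverse Hodge and Kähler identities, which hold here only after the adiabatic comparison with the genuine Kähler metrics $\omega+k\beta$, exactly as in the proof of Proposition~\ref{adjCH}.
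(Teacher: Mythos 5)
Your first two paragraphs track the paper's proof faithfully: the Lichnerowicz term dies because $\cuD_\beta h=0$, and splitting $\langle\rho_H,\ddb\phi\rangle_\beta$ into its trace part $\Lambda_\beta(\rho_H)\Delta_\beta\phi$ and a wedge part, then integrating the trace part by parts, cancels the term $-\tfrac12\langle\nabla_\beta(\Lambda_\beta\rho_H),\nabla_\beta\phi\rangle_\beta$ in $P$ and leaves exactly the two terms you list. The gap is in your final step, and it is twofold. First, the mechanism you propose for producing the mixed term cannot work. In the Stokes expansion of $\int_X h\,\rho_H\wedge\ddb\phi\wedge\omega^m\wedge\beta^{n-2}$, the term $\int_X h\,\bar\partial\rho_H\wedge\partial\phi\wedge\omega^m\wedge\beta^{n-2}$ vanishes identically: by vertical/horizontal type counting only the purely horizontal component $(d\rho_H)_H$ can pair with $\omega_{\cF}^m\wedge\partial\phi\wedge\beta^{n-2}$, and the paper shows this component vanishes. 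So this term is not, and cannot be, the source of the $\rho_{mix}$ correction. Your variant---rewriting $\bar\partial\rho_H=-\bar\partial(\rho_{\cF}+\rho_{mix})$ and discarding the $\rho_{\cF}$ piece ``by bidegree''---is also unjustified: since $d\rho=0$ and $d\rho_H$ has no component of vertical-horizontal type $(2,1)$ (the vertical distribution being integrable), the $(2,1)$ components of $d\rho_{\cF}$ and $d\rho_{mix}$ are negatives of each other, and type $(2,1)$ is precisely the component that survives the pairing; bidegree counting therefore cannot keep one of these contributions while discarding the other.

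Second, and more seriously, after the cancellations of your second paragraph the quantity that actually survives is a multiple of $\int_X \iota_v\rho_H\wedge i\partial\phi\wedge\omega^m\wedge\beta^{n-1}$, extracted from the $\iota_v\beta\wedge\rho_H\wedge\partial\phi$ term via Cartan-type identities of the form $\iota_v(\eta)=0$ for $\eta$ of degree $2m+2n+1$. No amount of type-tracking identifies this with $-n\int_X\rho_{mix}\wedge\iota_v\omega^m\wedge i\partial\phi\wedge\beta^{n-1}$: a priori $\iota_v\rho_H$ and $\rho_{mix}$ are independent pieces of curvature data, related only through $d\rho=0$ \emph{and} the holomorphy of $v$. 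The paper closes the argument by combining the two integrals into $\int_X(\iota_v\rho)_H\wedge i\partial\phi\wedge\omega^m\wedge\beta^{n-1}$ and then invoking the identity $\iota_v\rho=d\Delta_{\cF}(h_{\cF})$---the leafwise analogue of the classical fact that contracting the Ricci form with a holomorphic vector field yields an exact form whose potential is the Laplacian of the holomorphy potential (Example \ref{ex:EDG}(ii))---after which two further applications of Stokes' theorem, using $\Delta_{\cF}(h_{\cF})\,\omega_{\cF}^m=m\,(\ddb h_{\cF})_{\cF}\wedge\omega_{\cF}^{m-1}$, give zero. This identity is where the leafwise Hamiltonian $h_{\cF}$ and the full holomorphy of $v$ (beyond $\cuD_\beta h=0$) enter the proof; neither appears anywhere in your proposal, and without them the argument does not close.
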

\begin{proof}
Suppose $h$ is a transverse holomorphy potential for $v$. Then
$\int_X h L_{\beta}(\phi) \omega^m \wedge \beta^n = 0$, 
since $h$ lies in the $L^2$-orthogonal complement to the image of $L_{\beta}$. Moreover
\begin{align*}
\int_X h \langle \rho_H, \ddb\phi \rangle_{\beta}  \omega^m\wedge \beta^n 
&= \int_X h \Lambda_{\beta} (\rho_H) (\Delta_{\beta}\phi) \omega^m\wedge \beta^n\\
&\ \ - n(n-1) \int_X h \rho_H \wedge \ddb\phi \wedge \omega^m\wedge \beta^{n-2}, \\
&= -\frac{1}{2}\int_X \langle \nabla_{\beta}(h \Lambda_{\beta} (\rho_H)), \nabla_{\beta}\phi \rangle \omega^m\wedge \beta^n\\
&\ \ - n(n-1) \int_X h \rho_H \wedge \ddb\phi \wedge \omega^m\wedge \beta^{n-2}.
\end{align*}
Thus
\[
\int_X h P(\phi)  \omega^m\wedge \beta^n = -\frac{1}{2}\int_X \Lambda_{\beta} \rho_H \langle \nabla_{\beta}(h), \nabla_{\beta}\phi \rangle \omega^m\wedge \beta^n - n(n-1) \int_X h \rho_H \wedge \ddb\phi \wedge \omega^m\wedge \beta^{n-2}. 
\]

Since $\beta$ and $\omega$ are closed, applying Stokes' Theorem yields
\begin{align*}
    \int_X h \rho_H \wedge \ddb\phi \wedge \omega^m\wedge \beta^{n-2} &= i\int_X \bar \partial h \wedge \rho_H \wedge \partial \phi  \wedge \omega^m \wedge \beta^{n-2} \\
    &\ \ + i\int_X  h  \bar \partial \rho_H \wedge \partial \phi  \wedge \omega^m \wedge \beta^{n-2} \\
&=  i\int_X \iota_{v} \beta \wedge \rho_H \wedge \partial \phi  \wedge \omega^m\wedge \beta^{n-2} \\
&\ \ + i \int_X  h \bar \partial \rho_H \wedge \partial \phi  \wedge \omega^m \wedge \beta^{n-2} .
\end{align*}

For the latter of these two terms, note that while $\rho_H$ may not be closed, $(d \rho_H)_H = 0$, since this is the purely horizontal part of $d\rho$, which vanishes as $\rho$ itself is closed. Moreover, by examining horizontal and vertical components, we see that 
$$
\int_X  h \bar \partial \rho_H \wedge \partial \phi  \wedge \omega^m \wedge \beta^{n-2} = \int_X  h (d \rho_H)_H \wedge \partial \phi  \wedge \omega^m \wedge \beta^{n-2} =0.
$$

Next, consider $i \int_X \iota_{v} \rho_H \wedge \partial \phi  \wedge \omega^m \wedge \beta^{n-1}$. The form $\beta \wedge \rho_H \wedge \partial \phi  \wedge \omega^m \wedge \beta^{n-2}$ vanishes for degree reasons, hence
$$
\iota_{v} \left(\rho_H \wedge \partial \phi  \wedge \omega^m \wedge \beta^{n-1}\right) = 0
$$
also. Expanding yields
\begin{align*}
    \iota_{v}\beta \wedge \rho_H \wedge \partial \phi  \wedge \omega^m \wedge \beta^{n-2}  &=  \iota_{v} \rho_H \wedge \partial \phi  \wedge \omega^m \wedge \beta^{n-1} - \rho_H \wedge \iota_{v} \partial \phi  \wedge \omega^m \wedge \beta^{n-1} \\
    &\ \ \ -m \rho_H \wedge \partial \phi  \wedge \iota_v \omega \wedge \omega^{m-1} \wedge \beta^{n-1}, \\
&= \iota_{v} \rho_H \wedge \partial \phi  \wedge \omega^m \wedge \beta^{n-1} - \rho_H \wedge \iota_{v} \partial \phi  \wedge \omega^m \wedge \beta^{n-1} ,
\end{align*}
where the third term in the first equality again vanishes upon splitting into types.
We now claim that 
$$\int_X \rho_H \wedge \iota_{v} \partial \phi  \wedge \omega^m \wedge \beta^{n-1} $$
cancels with the inner product term. Indeed,
\begin{align*}
    n \int_X \rho_H \wedge \iota_{v} \partial \phi  \wedge \omega^m \wedge \beta^{n-1} & = \int_X \Lambda_{\beta} (\rho_H) \iota_{v} \partial \phi \omega^m \wedge \beta^{n}, \\
&= \frac{1}{2} \int_X \Lambda_{\beta}(\rho_H) \langle v , \nabla_{\beta} (\phi) \rangle \omega^m \wedge \beta^{n}, \\
&= \frac{1}{2} \int_X \Lambda_{\beta}(\rho_H) \langle \nabla_{\beta} (h), \nabla_{\beta} (\phi) \rangle \omega^m \wedge \beta^{n}.
\end{align*}

The remaining term we must consider is
\begin{align*}
    n(n-1) \int_X \iota_{v} \rho_H \wedge i\partial \phi  \wedge \omega^m \wedge \beta^{n-1} + n\int_X  \rho_{mix} \wedge  \iota_{v} \omega^{m} \wedge i \partial \phi \wedge \beta^{n-1}.
\end{align*}
Using the same argument as in the previous paragraph, we see that 
\begin{align*}
    0&= \iota_{v} \left(\rho_{mix} \wedge  \omega^{m} \wedge i \partial \phi \wedge \beta^{n-1}\right), \\
    &=\iota_{v}\rho_{mix} \wedge  \omega^{m} \wedge i \partial \phi \wedge \beta^{n-1} 
    + \rho_{mix} \wedge  \iota_{v}\omega^{m} \wedge i \partial \phi \wedge \beta^{n-1} \\
    &\ \ + \rho_{mix} \wedge  \omega^{m} \wedge i \iota_{v} \partial \phi \wedge \beta^{n-1} +\rho_{mix} \wedge  \omega^{m} \wedge i \partial \phi \wedge \iota_{v} \beta^{n-1}.
\end{align*}
Note that the terms $\rho_{mix} \wedge  \omega^{m} \wedge i \partial \phi \wedge \iota_{v} \beta^{n-1}$  and $\rho_{mix} \wedge  \omega^{m} \wedge i \iota_{v} \partial \phi \wedge \beta^{n-1}$ vanish since they both have an odd number of horizontal components.

It follows  that the quantity of interest reduces to
$$
n(n-1) \int_X \iota_{v} \rho_H \wedge i\partial \phi  \wedge \omega^m \wedge \beta^{n-1} + n\int_X  \iota_{v}\rho_{mix} \wedge  \omega^{m} \wedge i \partial \phi \wedge \beta^{n-1} =\int_X (\iota_{v} \rho)_H \wedge i\partial \phi  \wedge \omega^m \wedge \beta^{n-1}.
$$
But $\iota_{v} \rho = d \Delta_{\cF} (h_{\cF})$ and so this equals
\begin{align*}
\int_X \Delta_{\cF}(h_{\cF}) \ddb \phi  \wedge \omega^m \wedge \beta^{n-1}&= m \int_X \ddb(h_{\cF}) \wedge \ddb \phi  \wedge \omega^{m-1} \wedge \beta^{n-1}, \\
&= m\int_X d(i\bar \partial(h_{\cF}) \wedge \ddb \phi  \wedge \omega^{m-1} \wedge \beta^{n-1}), \\
&= 0.
\end{align*}
\end{proof}

We are now ready to show that the transverse Futaki invariant is independent of the representative in the class.
\begin{proof}[Proof of Theorem \ref{thm:futaki-body}]
Define  
\begin{align*}
 \Fut_{\beta}(v)=& \int_X h_{\cF} \left(S_{\cF}(\omega)-\hat S_{\cF} \right) \omega^m \wedge \beta^n + \int_X h (\Lambda_{\beta}( \Ric(\beta) + \rho) - \lambda) \omega^m \wedge \beta^n \\
&+ \frac{n}{m+1}\int_X h \left(S_{\cF}(\omega)-\hat{S}_{\cF} \right) \omega^{m+1} \wedge \beta^{n-1}.
\end{align*}
as the transverse Futaki invariant of a transverse K\"ahler form $\beta$, which we aim to show is independent of $\beta$ in its class.  
 
We claim that for every $\phi \in C^{\infty}_H (X)$ and every $v \in \mfh$, 
$$\frac{d}{dt}\bigg|_{t=0}\left( F_{\beta + t \ddb \phi}(v) \right) = 0,$$ which would give the result.   

Using Equation \eqref{eq:volderivative} and Proposition \ref{prop:lin}, we see that 
\begin{align*}
\frac{d}{dt}\bigg|_{t=0}\left( F_{\beta + t \ddb \phi}(v) \right) = 
  n m \int_X i h_{\cF} \bar \partial ( \rho_{\cF} ) \wedge \omega^{m-1} \wedge \partial \phi \wedge \beta^{n-1} - \int_X P(\phi) h \omega^m \wedge \beta^n .
\end{align*}
We now rewrite 
the first of these terms.  
Since $\rho$ is closed, we may replace $\rho_{\cF}$ with $-\rho_{mix}$ and obtain that
    \begin{align*}
       n m \int_X i h_{\cF} \bar \partial ( \rho_{\cF} ) \wedge \omega^{m-1} \wedge \partial \phi \wedge \beta^{n-1}   &= -n m   \int_X i h_{\cF} \bar \partial ( \rho_{mix} ) \wedge \omega^{m-1} \wedge \partial \phi \wedge \beta^{n-1}, \\
&= - n m \int_X i \bar \partial (h_{\cF}) \wedge  \rho_{mix} \wedge \omega^{m-1} \wedge \partial \phi \wedge \beta^{n-1} \\
& - n m \int_X i h_{\cF}  \rho_{mix} \wedge \omega^{m-1} \wedge \ddb \phi \wedge \beta^{n-1}.
    \end{align*}
Comparing types again we see that the latter term vanishes. Thus what remains is  
\begin{align*}
- n m \int_X i \bar \partial (h_{\cF}) \wedge  \rho_{mix} \wedge \omega^{m-1} \wedge \partial \phi \wedge \beta^{n-1} 
    &=- n m \int_X i \iota_{v} \omega \wedge  \rho_{mix} \wedge \omega^{m-1} \wedge \partial \phi \wedge \beta^{n-1}, \\
    &=- n \int_X  \rho_{mix} \wedge  \iota_{v} \omega^{m} \wedge i \partial \phi \wedge \beta^{n-1}. 
\end{align*} 

The  relevant derivative is therefore given by
\begin{align*}
  - n \int_X  \rho_{mix} \wedge  \iota_{v} \omega^{m} \wedge i \partial \phi \wedge \beta^{n-1} - \int_X P(\phi) h \omega^m \wedge \beta^n,
\end{align*}
which vanishes by Proposition \ref{prop:imP}, implying the result.
\end{proof}

\section{Submersions: an infinite-dimensional framework}\label{sec:subinfdim}

We return to the moment map geometry of submersions, where we consider the infinite-dimensional geometry to the space of almost complex structures on a submersion. In the subsequent section, we generalise our results to the setting of foliations. We thus define a suitable analogue version of the space of almost complex structures compatible with a smooth symplectic submersion,  introduce a gauge group action on this space, and  prove that a version of our coupled system arises as a moment map. A key tool in our work will be to employ the robust adiabatic arguments of Section \ref{sec:5.3.1} to deduce the moment map property.

\subsection{The space of almost complex structures compatible with a submersion}
We fix a smooth symplectic submersion, by which we mean the data of $\pi:(M,\omega_M)\to (B,\omega_B)$, with $\pi: M \to B$ being a smooth proper submersion between compact manifolds, with $\omega_B$ being symplectic and with $\omega_M$ being relatively symplectic (that is, a closed two-form whose restriction to each fibre is symplectic).

We begin with the relevant space of almost complex structures.

\begin{definition}
The \emph{space of integrable almost complex structures} on $\pi$ is $$
\mathscr J^{\Int}_\pi=\left\{\left. (J_M,J_B)\in\mathscr J^{\Int}(M,\omega_M)\times \mathscr J^{\Int}(B,\omega_B) \right |  J_B\circ d\pi=d\pi\circ J_M\right\}.
$$
\end{definition}

Thus an element of $\mathscr J^{\Int}_\pi$ is a pair of integrable almost complex structures on $M$ and $B$, which make the smooth map $\pi: M \to B$ holomorphic. Our notation $\mathscr J^{\Int}(M,\omega_M)$ is a minor extension of the usual notation, referring to integrable almost complex structures on $M$ which preserve $\omega_M$, which may not be globally symplectic. Recall that the tangent space to $\mathscr J^{\Int}(M,\omega_M)$ at some complex structure $J_M$ consists of the $A_M \in \mathcal{A}^0(\End TM)$ such that $A_M$ is compatible with the complex structure, i.e. $A_M \circ J_M = - J_M \circ A_M$,  is compatible with the symplectic form, i.e. $\omega_M(J_M(\cdot), A_M(\cdot))=-\omega_M(A_M(\cdot), J_M(\cdot))$ and further $\bar \partial_{J_M}(A_M^{1,0})=0$, where $A_M^{1,0}$ is the $(1,0)$-component of $A_M$ with respect to $J_M$. 

We view $\mathscr J^{\Int}_\pi$  as a version of a complex subspace of $\mathscr J(M,\omega_M)\times \mathscr J(B,\omega_B)$, but lacking a robust infinite-dimensional theory of singular complex spaces we give a more concrete treatment and view $\mathscr J^{\Int}_\pi$ as a set with a well-defined (formal) tangent space.

\begin{definition} We define the \emph{tangent space} $T_{(J_M,J_B)}\mathscr J^{\Int}_\pi$ at $(J_M,J_B)$ to consist of   pairs  $(A_M,A_B) \in T_{J_M} \mathscr J^{\Int}(M,\omega_M) \times T_{J_B} \mathscr J^{\Int}(B,\omega_B)$ such that $A_B \circ d \pi = d \pi \circ A_M$. For each $(J_M,J_B) \in \mathscr J^{\Int}_\pi$, we define an almost complex structure on $T_{(J_M,J_B)}\mathscr J^{\Int}_\pi$ by 
$$
(A_M,A_B) \mapsto (J_M \circ A_M, J_B \circ A_B).
$$
\end{definition}
Note that the condition $A_B \circ d \pi = d \pi \circ A_M$ is preserved by $(J_M \circ A_M, J_B \circ A_B)$, so the above almost complex structure sends $T_{(J_M,J_B)}\mathscr J^{\Int}_\pi$ to itself.

The space  $\mathscr J^{\Int}_\pi$ admits a natural gauge group action.

\begin{definition} We define the group of \emph{Hamiltonian symplectomorphisms} $\Ham(\pi)$ of $\pi:(M,\omega_M)\to (B,\omega_B)$ to be pairs $(\phi_M, \phi_B) \in \Ham(M,\omega_M) \times \Ham(B,\omega_B)$ such that $\pi \circ \phi_M = \phi_B$. 
\end{definition}

The Lie algebra $\mathfrak h_{\pi}$ of $\Ham(\pi)$ thus consists of pairs of Hamiltonian vector fields $(v_M, v_B)$, such that the pushforward of $v_M$ under $d\pi$ equals $v_B$.

We next endow $\mathscr J^{\Int}_\pi$ with a natural closed semipositive $(1,1)$-form, by defining for $A^j = (A^j_m,A^j_B) \in T_{(J_M, J_B)}\mathscr J^{\Int}_\pi$,
$$
\Omega_{\mathscr J_\pi}(A^1, A^2):= \int_M\left(\omega_B(A_B^1, A_B^2)+ \omega_M\big|_{V}(A_M^1, A_M^2)\right)\omega_M^{m}\wedge \omega_B^n,
$$
where $\omega_M\big|_{V}$ is $\omega_M$ restricted to the purely vertical component $\End (\ker \pi)$. 

We will show that this is semipositive, but note that it is not necessarily strictly positive. In order to see this, and to construct a moment map, we  embed $\mathscr J^{\Int}_\pi$ into a sequence of infinite-dimensional K\"ahler manifolds.
\begin{lemma}\label{lem:embeddings}
The space $\mathscr J^{\Int}_\pi$ admits a natural sequence of $\mathrm{Ham}(\pi)$-equivariant holomorphic embeddings $$\sigma_k: \mathscr J^{\Int}_\pi \to \mathscr J^{\Int}(M, \omega_M+k\pi^*\omega_B)$$ for $k \gg 0$.
\end{lemma}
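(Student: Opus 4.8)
The plan is to show that the obvious forgetful assignment $\sigma_k(J_M,J_B):=J_M$, which discards the base complex structure and records only $J_M$ as an almost complex structure on $M$, takes values in $\mathscr J^{\Int}(M,\omega_M+k\pi^*\omega_B)$ once $k\gg 0$ and has all the required properties. There will be four things to verify: that $J_M$ genuinely lies in $\mathscr J^{\Int}(M,\omega_M+k\pi^*\omega_B)$ (well-definedness), that $\sigma_k$ is injective with injective differential (so that it is an embedding in our formal sense), that it is holomorphic, and that it is $\Ham(\pi)$-equivariant.

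Well-definedness is where the hypothesis $k\gg 0$ enters, and I expect the positivity check to be the main obstacle. Integrability of $J_M$ is given, and $\omega_M+k\pi^*\omega_B$ is $J_M$-invariant: $\omega_M$ is $J_M$-invariant by definition of $\mathscr J^{\Int}(M,\omega_M)$, while $\pi^*\omega_B$ is $J_M$-invariant because $\pi$ is holomorphic, $d\pi\circ J_M=J_B\circ d\pi$, so that $(\pi^*\omega_B)(J_M\cdot,J_M\cdot)=\omega_B(J_B d\pi\cdot,J_B d\pi\cdot)=\pi^*\omega_B$ using $J_B$-invariance of $\omega_B$. The substantive point is positivity of $g_k:=(\omega_M+k\pi^*\omega_B)(\cdot,J_M\cdot)$. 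On the vertical bundle $V=\ker d\pi$ the form $\omega_M(\cdot,J_M\cdot)$ is already positive definite by fibrewise $\omega_M$-compatibility of $J_M$, whereas $\pi^*\omega_B(\cdot,J_M\cdot)$ is positive semidefinite with kernel exactly $V$. Thus $g_k$ adds a large multiple of a semipositive form that is nondegenerate transverse to the fibres to a fibrewise positive form, and exactly as in the compactness argument already used in the proof of Proposition \ref{adjCH}, positivity holds for all $k$ sufficiently large, uniformly over the compact $M$. I would remark that uniformity over the noncompact space $\mathscr J^{\Int}_\pi$ is not automatic, so $\sigma_k$ is to be understood as defined on the locus where $g_k>0$, which exhausts $\mathscr J^{\Int}_\pi$ as $k\to\infty$.

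For injectivity, note first that $J_M$ preserves $V$: if $X\in V$ then $d\pi(J_M X)=J_B d\pi X=0$, so $J_M X\in V$. Consequently $J_B$ is recovered from $J_M$ through the well-defined formula $J_B(d\pi X):=d\pi(J_M X)$, independence of the lift $X$ following from $J_M V\subseteq V$; hence $\sigma_k$ is injective. The same reasoning on the formal tangent spaces shows $d\sigma_k\colon(A_M,A_B)\mapsto A_M$ is injective, since the defining constraint $A_B\circ d\pi=d\pi\circ A_M$ forces $A_B$ to be determined by $A_M$. One then checks $A_M\in T_{J_M}\mathscr J^{\Int}(M,\omega_M+k\pi^*\omega_B)$: the conditions $A_M J_M=-J_M A_M$ and $\bar\partial_{J_M}(A_M^{1,0})=0$ depend only on $J_M$ and so are inherited, while compatibility with $\pi^*\omega_B$, namely $\pi^*\omega_B(A_M\cdot,J_M\cdot)=\pi^*\omega_B(J_M\cdot,A_M\cdot)$, follows by pushing forward along $d\pi$ and invoking the compatibility of $A_B$ with $\omega_B$.

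Finally, holomorphicity is immediate: the almost complex structure on $\mathscr J^{\Int}_\pi$ sends $(A_M,A_B)\mapsto(J_M A_M,J_B A_B)$ and that on the target sends $A_M\mapsto J_M A_M$, so the forgetful differential $d\sigma_k$ intertwines them. For equivariance, $\Ham(\pi)$ acts on the target through its first factor $\phi_M$, which is legitimate because $\phi_M$ preserves $\omega_M+k\pi^*\omega_B$: indeed $\phi_M^*\pi^*\omega_B=(\pi\circ\phi_M)^*\omega_B=(\phi_B\circ\pi)^*\omega_B=\pi^*\phi_B^*\omega_B=\pi^*\omega_B$ since $\phi_B\in\Ham(B,\omega_B)$. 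As $\sigma_k$ merely records the first factor, it intertwines the two conjugation actions, completing the verification.
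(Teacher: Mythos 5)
Your proof is correct and follows essentially the same route as the paper's: the same forgetful map $\sigma_k(J_M,J_B)=J_M$, injectivity via recovering $J_B$ from $J_M$ using surjectivity of $d\pi$, and the same one-line holomorphicity computation. The checks you add beyond the paper's own proof---positivity of the induced metric for $k\gg 0$, membership of $A_M$ in the target tangent space, $\Ham(\pi)$-equivariance, and the caveat that the positivity threshold in $k$ need not be uniform over the noncompact space $\mathscr J^{\Int}_\pi$---are all points the paper leaves implicit or glosses over, and they are sound.
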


\begin{proof}
The map $\sigma_k$ is defined in the natural way: if $(J_M,J_B) \in \mathscr J^{\Int}_\pi$, then $J_M$ is compatible with $ \omega_M+k\pi^*\omega_B$ for all $k$, since $J_B\circ d\pi=d\pi\circ J_M$. To prove that $\sigma_k$ is injective, it suffices to prove that we may recover $J_B$ from $J_M$ and the smooth map $\pi$. But by definition, $J_B\circ d\pi=d\pi\circ J_M$, so since $d\pi$ is surjective ($\pi$ being a submersion), we may recover $J_B$ from $J_M$. A similar argument implies that the formal differential of $\sigma_k$ is injective. To complete the proof, we must show that the $\sigma_k$ are  holomorphic,  which by definition means their differentials intertwine the two almost complex structures; letting $(A_M,A_B) \in T_{(J_M,J_B)}\mathscr J^{\Int}_\pi,$ we see that 
\begin{align*}
    d \sigma_k(J \circ (A_M, A_B)) &= d \sigma_k((J_M \circ A_M, J_B \circ A_B)) \\
    =& J_M \circ A_M \\
    =& J_M \circ d\sigma_k((A_M,A_B))
\end{align*}
and so the $\sigma_k$ are holomorphic.
\end{proof}

The closedness and semi-positivity of $\Omega_{\mathscr J_{\pi}}$ now follows from an adiabatic argument.
\begin{lemma}\label{lem:Omegaclosed}
The form $\Omega_{\mathscr J_\pi}$ is a closed, semipositive $(1,1)$-form on $\mathscr J^{\Int}_\pi$.
\end{lemma}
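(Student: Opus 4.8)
The plan is to pull back both assertions---closedness and semipositivity---from the ambient K\"ahler manifolds $\mathscr J^{\Int}(M,\omega_M+k\pi^*\omega_B)$ via the holomorphic embeddings $\sigma_k$ of Lemma \ref{lem:embeddings}, and to extract the claim about $\Omega_{\mathscr J_\pi}$ as the leading-order term of an asymptotic expansion in $k$, exactly as in the adiabatic arguments of Section \ref{sec:5.3.1}. The key observation is that $\sigma_k^*\Omega_k$, where $\Omega_k$ denotes the Donaldson--Fujiki K\"ahler form \eqref{mabuchi} on $\mathscr J^{\Int}(M,\omega_M+k\pi^*\omega_B)$, is automatically closed (being the pullback of a closed form under a smooth map) and semipositive (being the pullback of a positive form), and these properties are stable under the asymptotic expansion in the sense that the leading-order coefficient inherits them.

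\textbf{Key steps.} First I would write down $\sigma_k^*\Omega_k$ explicitly. For a tangent vector $(A_M,A_B)\in T_{(J_M,J_B)}\mathscr J^{\Int}_\pi$, the differential $d\sigma_k$ sends it to $A_M\in T_{J_M}\mathscr J^{\Int}(M,\omega_M+k\pi^*\omega_B)$, so
\begin{equation*}
(\sigma_k^*\Omega_k)(A^1,A^2)=\int_M \langle A_M^1,A_M^2\rangle_{g_k}\,(\omega_M+k\pi^*\omega_B)^{m+n},
\end{equation*}
where $g_k$ is the Riemannian metric associated to $\omega_M+k\pi^*\omega_B$ and $J_M$. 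Since $\sigma_k$ is holomorphic (Lemma \ref{lem:embeddings}) and $\Omega_k$ is closed, $\sigma_k^*\Omega_k$ is closed; since $\Omega_k$ is positive, $\sigma_k^*\Omega_k$ is positive, hence in particular semipositive. Next I would expand this integral in powers of $k$. The volume form expands as $(\omega_M+k\pi^*\omega_B)^{m+n}=\binom{m+n}{n}k^n\,\omega_M^m\wedge\omega_B^n+O(k^{n-1})$, and the pointwise pairing $\langle A_M^1,A_M^2\rangle_{g_k}$ must be expanded by splitting $A_M$ into its purely vertical, purely horizontal, and mixed components relative to the $g_k$-orthogonal splitting $TM=V\oplus H$. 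Because $\omega_B$ scales only the horizontal directions, the horizontal and vertical contributions to $\langle\cdot,\cdot\rangle_{g_k}$ enter at different orders in $k$, and a careful bookkeeping shows that the leading-order coefficient of $k^n$ in $(\sigma_k^*\Omega_k)(A^1,A^2)$ is precisely (a positive multiple of) $\Omega_{\mathscr J_\pi}(A^1,A^2)$, once one identifies the horizontal part of $A_M$ with $A_B$ through the compatibility relation $A_B\circ d\pi=d\pi\circ A_M$.

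\textbf{Conclusion from the expansion.} Having shown $\sigma_k^*\Omega_k=c\,k^n\,\Omega_{\mathscr J_\pi}+O(k^{n-1})$ for a positive constant $c$, closedness of $\Omega_{\mathscr J_\pi}$ follows because $d$ commutes with the coefficient extraction in the expansion (each $\sigma_k^*\Omega_k$ is closed, so every coefficient in its polynomial-in-$k$ expansion is closed as well), and semipositivity follows because the leading coefficient of a family of semipositive forms is semipositive. The main obstacle, as usual in adiabatic arguments, is the careful tracking of the component decomposition of $A_M$ under $g_k$ and verifying that the leading-order pairing isolates exactly the vertical piece $\omega_M|_V(A_M^1,A_M^2)$ together with the horizontal piece $\omega_B(A_B^1,A_B^2)$ with the correct relative normalisation, so that no cross terms survive and the horizontal contribution matches the $\omega_B$-pairing of $A_B$. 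This step requires the explicit form of the adiabatic metric expansion analogous to the scalar curvature expansion invoked in Section \ref{sec:5.3.1}, and is where the geometry of the submersion enters essentially.
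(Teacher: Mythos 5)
Your proposal is correct and follows essentially the same route as the paper: embed $\mathscr J^{\Int}_\pi$ into $\mathscr J^{\Int}(M,\omega_M+k\pi^*\omega_B)$ via $\sigma_k$, expand the Donaldson--Fujiki form $\Omega_k$ adiabatically, identify the $k^n$-coefficient as $\binom{m+n}{n}\Omega_{\mathscr J_\pi}$, and inherit closedness and semipositivity from $\Omega_k$. The "careful bookkeeping" you defer is exactly the paper's key observation that tangent vectors to $\mathscr J^{\Int}_\pi$ have vanishing $\mathrm{Hom}(V,H)$-component (forced by $A_B\circ d\pi=d\pi\circ A_M$), which rules out any $O(k^{n+1})$ term and makes the expansion start at order $k^n$.
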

\begin{proof}
We consider an adiabatic expansion of the usual K\"ahler form $\Omega_k$ on $\mathscr J^{\Int}(M, \omega_M + k \pi^*\omega_B)$, which we have $\mathscr J_{\pi}$ embedded into by Lemma \ref{lem:embeddings}. For $A^j = (A^j_M, A^j_B) \in T_{(J_M,J_B)} \mathscr J^{\Int}_{\pi}$, we have $A^j_M \in T_{J_M} \mathscr J(M, \omega_M + k \pi^*\omega_B)$ and $A^j_B \circ d \pi = d \pi \circ A^j_M$. Moreover, the $A^j_M$ have vanishing $\mathrm{Hom}(V,H)$-component. An expansion of the metric $g_k$ on $\End TM \cong T^*M \otimes TM$ induced by $g_k$ on $TM$ and some linear algebra then gives that 
\begin{align*}
    g_k(A^1_M, A^2_M) &= g_V(A^1_M, A^2_M) + \pi^*g_B (A^1_M, A^2_M) + O(k^{-1}),\\
    &= g_V(A^1_M, A^2_M) + g_B(A^1_B, A^2_B) + O(k^{-1}).
\end{align*}
Here $g_V$ is the restriction of $g_M$ to the purely vertical component $\End V$. There is no $O(k)$-term in this expansion precisely because the $A^j_M$ have vanishing $\mathrm{Hom}(V,H)$-component.

This in turn gives that 
\begin{eqnarray*}
\Omega_k(A^1_M, A^2_M) = \binom{m+n}{n}k^n  \Omega_{\mathscr J_\pi}(A^1, A^2)+O(k^{n-1}).
\end{eqnarray*}
Hence closedness of $\Omega_{\mathscr J_\pi}$ follows from closedness of the $\Omega_k$, while semipositivity is clear (and also follows from the positivity of the $\Omega_k$). 
\end{proof}

We are now in a position to ask for a moment map for the $\Ham(\pi)$-action on $(\mathscr J^{\Int}_\pi,\Omega_{\mathscr J_\pi})$.  
\begin{theorem}
\label{thm:cplxstrmomentmap}
Let $v = (v_M,v_B) \in \mathfrak h_{\pi}$, where $h_M$ is the Hamiltonian for $v_M$ with respect to $\omega_M$ and similarly for $h_B$. A moment map $\mu_{\pi} : \mathscr J^{\Int}_\pi \to \mathfrak h_{\pi}^*$ for the $\mathrm{Ham}(\pi)$-action on $\left(\mathscr J^{\Int}_\pi,\Omega_{\mathscr J_\pi}\right)$ at $v$ is given by
\begin{align*}
\langle\mu_\pi(J),v\rangle &= \int_M h_{M} \left(S_{V}(\omega_M)-\hat S_{V} \right) \omega_M^m \wedge \omega_B^n + \int_M h_B (\Lambda_{\omega_B}( \Ric(\omega_B) + \rho_H) - \hat S_{\pi}) \omega_{M}^m \wedge \omega_B^n \\
 & \ \ \ + \frac{n}{m+1}\int_M h_B (S_{V}(\omega_{M}) -\hat S_{V}) \omega_{M}^{m+1} \wedge \omega_B^{n-1}.
\end{align*}
\end{theorem}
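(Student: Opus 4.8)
The plan is to deduce the result by an adiabatic argument, transplanting the Donaldson--Fujiki theorem from $\mathscr J^{\Int}(M,\omega_M+k\pi^*\omega_B)$ to $\mathscr J^{\Int}_\pi$ along the embeddings $\sigma_k$ of Lemma \ref{lem:embeddings}, and extracting the leading order term of the resulting asymptotic expansion. First I would record the elementary restriction principle for moment maps: if $\mu$ is a moment map for a group $G$ acting on $(X,\Omega)$ and $Y\subset X$ is a $G'$-invariant subspace for a subgroup $G'\subset G$, then the composite $Y\hookrightarrow X\xrightarrow{\mu}\mathfrak g^*\to(\mathfrak g')^*$ is a moment map for the $G'$-action on $(Y,\Omega|_Y)$. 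This holds because the infinitesimal action of any $v\in\mathfrak g'$ is tangent to $Y$, so restricting the identity $d\langle\mu,v\rangle=\iota_{X_v}\Omega$ to $Y$ yields $d\langle\mu,v\rangle|_Y=\iota_{X_v}(\Omega|_Y)$, with equivariance inherited.

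I would then apply this with $X=\mathscr J^{\Int}(M,\omega_k)$ for $\omega_k=\omega_M+k\pi^*\omega_B$, $\Omega=\Omega_k$ the Fujiki--Donaldson form, $G=\Ham(M,\omega_k)$ with moment map from Theorem \ref{fujiki-donaldson}, $Y=\sigma_k(\mathscr J^{\Int}_\pi)$ and $G'=\Ham(\pi)$. Here $\Ham(\pi)$ really is a subgroup of $\Ham(M,\omega_k)$: an element $(\phi_M,\phi_B)$ preserves $\pi^*\omega_B$ since $\pi\circ\phi_M=\phi_B\circ\pi$, hence preserves $\omega_k$, and the vector field generated by $v=(v_M,v_B)\in\mathfrak h_\pi$ has single-valued Hamiltonian $h_M+kh_B$ with respect to $\omega_k$. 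The restriction principle together with $\Ham(\pi)$-equivariance of $\sigma_k$ (Lemma \ref{lem:embeddings}) then shows that
\[
\langle\mu_k,v\rangle(J)=\int_M (h_M+kh_B)\big(S(\omega_k,J_M)-\hat S_k\big)\,\omega_k^{m+n}
\]
is a moment map for the $\Ham(\pi)$-action on $\big(\mathscr J^{\Int}_\pi,\sigma_k^*\Omega_k\big)$.

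The heart of the argument is to expand both sides of the identity $d\langle\mu_k,v\rangle=\iota_{X_v}(\sigma_k^*\Omega_k)$ in $k$. By Lemma \ref{lem:Omegaclosed} one has $\sigma_k^*\Omega_k=\binom{m+n}{n}k^n\,\Omega_{\mathscr J_\pi}+O(k^{n-1})$ on fixed tangent vectors, while $\langle\mu_k,v\rangle$ is precisely the integrand analysed in Section \ref{sec:5.3.1} under $\omega\leftrightarrow\omega_M$, $\beta\leftrightarrow\omega_B$ and $h_k=kh_B+h_M$. The expansion \eqref{Fut-expan} is a pointwise consequence of the scalar-curvature expansion of Fine \cite{finecsck} and is therefore valid at every $J\in\mathscr J^{\Int}_\pi$ with no holomorphy hypothesis on $v$. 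Its would-be leading term, of order $k^{n+1}$, is $\binom{m+n}{n}\int_M h_B(S_{V}(\omega_M)-\hat S_{V})\,\omega_M^m\wedge\omega_B^n$, which vanishes because $h_B$ is transverse and $S_{V}(\omega_M)-\hat S_{V}$ lies in the image of the vertical Laplacian; this is exactly Lemma \ref{h-0}, whose proof uses only transversality of $h_B$ and closedness of the leafwise Ricci form $\rho$, both holding for every $J$. Discarding the vertical-Laplacian contribution in the subleading coefficient by the same self-adjointness argument, and using $\binom{m+n}{n-1}/\binom{m+n}{n}=n/(m+1)$, the coefficient of $k^n$ is exactly $\binom{m+n}{n}$ times the claimed expression $\langle\mu_\pi(J),v\rangle$, the constant being identified through $\hat S_\pi=\lambda$ as in \eqref{gammaeqn} and Proposition \ref{lem:samefut}. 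Matching the $k^n$ coefficients then gives $d\langle\mu_\pi,v\rangle=\iota_{X_v}\Omega_{\mathscr J_\pi}$, and equivariance of $\mu_\pi$ descends from that of each $\mu_k$.

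The main obstacle is the legitimacy of matching coefficients in a differential identity: one must know that the expansion $\langle\mu_k,v\rangle=\binom{m+n}{n}k^n\langle\mu_\pi,v\rangle+O(k^{n-1})$ holds together with its first derivative along $\mathscr J^{\Int}_\pi$, so that $d\langle\mu_k,v\rangle=\binom{m+n}{n}k^n\,d\langle\mu_\pi,v\rangle+O(k^{n-1})$. Since $X_v$ is independent of $k$, the contraction $\iota_{X_v}(\sigma_k^*\Omega_k)$ expands with remainder $O(k^{n-1})$ directly from Lemma \ref{lem:Omegaclosed}, so the crux is the \emph{differentiated} expansion of the moment map. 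This follows because both Fine's scalar-curvature expansion and its linearisation are local, pointwise constructions, and hence hold in $C^\infty$ uniformly in $k$; granting this uniformity, dividing by $\binom{m+n}{n}k^n$ and letting $k\to\infty$ isolates the desired moment map identity.
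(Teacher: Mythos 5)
Your proposal is correct and takes essentially the same approach as the paper: restrict the Donaldson--Fujiki moment map on $\mathscr J^{\Int}(M,\omega_M+k\pi^*\omega_B)$ to the subgroup $\Ham(\pi)$ acting on the embedded copy $\sigma_k(\mathscr J^{\Int}_\pi)$, expand both $\sigma_k^*\Omega_k$ and $\langle\mu_k,v\rangle$ in $k$, observe that the order-$k^{n+1}$ term vanishes identically (Lemma \ref{h-0}) and that the vertical-Laplacian term dies against the fibre-constant $h_B$, and match coefficients at order $k^n$. Your closing discussion of why one may differentiate the expansion term-by-term is exactly the point the paper delegates to \cite[Theorem 3.12]{LSW2022}.
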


\begin{remark}
This result proves Theorem \ref{intromainthm}, which may be seen by rewriting terms in a similar fashion to the proof of Proposition \ref{lem:samefut}.
\end{remark}

The spaces $(\mathscr J(M, \omega_M+k\pi^*\omega_B),\Omega_k)$ each admit a natural moment map with respect to their respective Hamiltonian symplectomorphism group actions, given by the scalar curvature. This induces a moment map for the subgroup $\mathrm{Ham}(\pi)$ by composing with the map $\mathfrak{h}_k^* \to \mathfrak{h}_{\pi}^*$, dual to the inclusion $\mathfrak{h}_{\pi} \subset \mathfrak{h}_k$, where $\mathfrak{h}$ is the Lie algebra of the exact symplectomorphisms of $(M, \omega_k)$. This simply amounts to restricting the moment map with respect to $\Omega_k$ to the vector fields in $\mathfrak{h}_{\pi}$. Since the moment map on an invariant subspace is simply the restriction of the moment map, we have a $k$-dependent sequence of K\"ahler metrics $\Omega_k$ and a sequence of moment maps on $ \mathscr J^{\Int}_\pi$ itself. It is then straightforward to see that, provided the sequence of K\"ahler metrics and moment maps admit an asymptotic expansion in $k$, the moment map property is preserved for each term in the asymptotic expansion (see for example  \cite[Theorem 3.12]{LSW2022}). This is the main idea in the following proof.

\begin{proof}[Proof of Theorem \ref{thm:cplxstrmomentmap}]
As above, $\Omega_{k}$ is the $k$-dependent K\"ahler form on  $\mathscr J^{\Int}_\pi$ obtained as the pullback of the Donaldson--Fujiki K\"ahler metric on $\mathscr J^{\Int}(M, \omega_k)$ by the injective map $\sigma_k$ considered in Lemma \ref{lem:embeddings}, where $\omega_k = \omega_M+k\pi^*\omega_B$. Recall from  Theorem \ref{fujiki-donaldson} that the moment map $\mu_k : \mathscr J^{\Int}(M, \omega_k) \to \mathfrak{ham}^\ast (M,\omega_k)$ for the $\mathrm{Ham}(M,\omega_k)$-action at  $J \in \mathscr J^{\Int}(M, \omega_k)$ is given by
\begin{align*}
v \mapsto  \int_M h_k (v)\big(S(\omega_k,J)-\hat S(\omega_k,J)\big)\omega_k^{n+m},
\end{align*}
where $h_k$ is the Hamiltonian for $v$ with respect to $\omega_k$. 

For an element $v \in \mathfrak{h}_{\pi}$, we can write $h_k = h_M + k h_B$ as in Equation \eqref{h_k}. It follows from this and the expansion in Equation \eqref{Fut-expan} that at $J\in \mathscr J_{\pi}$, $\mu_k(J)$ admits an expansion in powers of $k$. Now, the moment map property states that, for all  for all $v \in \mfk_{\pi}$
$$
d(\langle \mu_k(J), v \rangle) = \iota_{v} \Omega_k.
$$
From the expansion of $\Omega_k$ given in the proof of Lemma \ref{lem:Omegaclosed}, we  therefore obtain that the $O(k^n)$-term in the expansion of $\mu_k(J)$ satisfies the moment map property with respect to $\Omega_{\mathscr J_\pi}$, up to a factor of $\binom{m+n}{n}$. But from the calculation in Equation \eqref{Fut-expan} the expansion of $\mu_k$ is given by
\begin{eqnarray*}
\frac{\left\langle \mu_{\mathscr J,k}(J), v\right\rangle}{\binom{m+n}{n}\cdot k^n}  &=& \left(\binom{m+n}{n}\cdot k^n\right)^{-1}\int_Mh_k \big(S(\omega_k,J_M)-\hat S(\omega_k,J_M)\big)\omega_k^{n+m},\\
&=&\int_M h_{M} (S_{V}(\omega_M) - \hat{S}_{V}) \omega_{M}^m \wedge \omega_B^n  \\
&& + \int_M h_B \Big(S(\omega_B) + \Lambda_{\omega_B} \big(\rho_H\big) - \hat S_{\pi} + \Delta_{V} (\Lambda_{\omega_B}(\omega_{M})) \Big) \omega_{M}^m \wedge \omega_B^n  \\
&& + \frac{n}{m+1}\int_M h_B \left(S_{V}(\omega_{M}) - \hat{S}_{V}\right) \omega_{M}^{m+1} \wedge \omega_B^{n-1}  + O\left(k^{-1}\right),
\end{eqnarray*}
where, as in the statement of the theorem, we have suppressed the dependence on $J$ in the scalar curvature quantities and the vertical Laplacian in the final equation. As $h_B$ is pulled back from $B$ and therefore constant along fibres, $\int_M h_B \Delta_{V} (\Lambda_{\omega_B}(\omega_{M})) \omega_{M}^m \wedge \omega_B^n =0$, which implies the result.
\end{proof}

\subsection{A formal space with positivity}\label{sec:formal} The moment map given by Theorem \ref{thm:cplxstrmomentmap} for the $\mathrm{Ham}(\pi)$-action on $\left(\mathscr J_\pi,\Omega_{\mathscr J_\pi}\right)$ is a moment map with respect to a symplectic form which is degenerate, as it vanishes on the mixed terms corresponding to the change of almost complex structure on $X$.  If one changes perspective and varies the K\"ahler form instead of the complex structure, this degeneracy can be seen by the fact that the equation does not change when adding a form pulled back from the base of the submersion $X \to B$ to $\omega$. 

We next construct a quotient of $\left(\mathscr J^{\Int}_\pi,\Omega_{\mathscr J_\pi}\right)$ on which the symplectic form becomes positive. Again we argue formally, and consider spaces which are given by a set together with a specified vector space at each point acting as the formal tangent space.

Define an equivalence relation $\sim$ on $\mathscr J_\pi$ by setting 
$$J\sim J' \iff J_V = J'_V \ \text{and} \ \  J_B = J'_B.$$
Here $J_V$ is the purely vertical part of the almost complex structure $J_M$ on $M$. Thus, $J$ and $J'$ are equivalent precisely when the complex structures on $M$ agree on each fibre of the map $\pi$, and $J_B=J'_B$, which implies that  for $J\sim J'$, the almost complex structures $J_M$ and $J'_M$ only differ in their mixed components. The underling set of our space is then $$\widetilde{\mathscr J}_\pi = \mathscr J_\pi / \mathord{\sim}$$ and we denote by  $\phi : \mathscr J_\pi \to \widetilde{\mathscr J}_\pi$ the quotient map. Now define the tangent space at a point $[J]$ to be the subspace of $T_J \mathscr J_\pi \subset \End(TM)$ generated by purely vertical and purely horizontal endomorphisms. Note that this is well-defined, as $T_J \mathscr J_\pi$ does not depend on $J$.  Thus each tangent space of $\widetilde{\mathscr J}_\pi$ can be identified with a fixed vector space (that we call $W$) that is independent of $[J]$, so the ``tangent bundle" to $\widetilde{\mathscr J}_\pi$ is just the trivial bundle modeled on $W$.

Having defined the tangent spaces at each point of $\widetilde{\mathscr J}_\pi$, we can also define their usual tensor products. We will abuse notation and write $W$ for the tangent bundle of $\widetilde{\mathscr J}_\pi$ as well, and similarly for the tensor bundles.

One may define the exterior derivative $d : \Gamma(\Lambda^j W) \to \Gamma(\Lambda^{j+1} W)$ in the following manner. We say that a $j$-form $\alpha$ on $\widetilde{\mathscr J}_\pi$ is smooth if its pullback $\hat \alpha$ to $\mathscr J_\pi$ is smooth. For such a $j$-form, we then have the exterior derivative $d \hat \alpha$. Over a point $[J] \in \widetilde{\mathscr J}_\pi$, the form $d \hat \alpha$ may depend on the representative. However, since $\hat \alpha$ itself has no mixed terms (as it is pulled back from $\widetilde{\mathscr J}_\pi$), this dependence on $J$ is only in the \emph{mixed} terms of $d \hat \alpha$. Thus, the $\Gamma(\Lambda^{j+1} W)$-component of $d \hat \alpha$ is independent of the representative of $J \in \phi^{-1}([J])$ and  this allows us to define $d \alpha$ at a point $[J]$ as the $\Gamma(\Lambda^{j+1} W)$-component of $d \hat \alpha$ at a representative $J$ in $\mathscr J_{\pi}$.

We obtain a two form $\Omega_{\widetilde{\mathscr J}_\pi} \in \Gamma(\Lambda^2T\widetilde{\mathscr J}_\pi)$ on $\widetilde{\mathscr J}_\pi$ by restricting $\Omega_{\mathscr J_\pi}$ to $W$.  Note that this is well-defined as if $J \sim J'$, then the purely vertical and purely horizontal components of $J$ and $J'$ agree. Hence, the trace terms with respect to $\omega_B$ and $\omega_M$ in the expression for $\Omega_{\mathscr J_\pi}$ will agree for $J$ and $J'$ on elements of $W$. Note also that $\Omega_{\widetilde{\mathscr J}_\pi}$ is closed, since $\phi^* \Omega_{\widetilde{\mathscr J}_\pi} = \Omega_{\mathscr J_\pi}$ and $\Omega_{\mathscr J_\pi}$ is closed. Moreover, $\Omega_{\widetilde{\mathscr J}_\pi}$ is now a positive form, in contrast with $\Omega_{\mathscr J_\pi}$.

Now, $\mathrm{Ham}(\pi)$ acts on $\widetilde{\mathscr J}_\pi$ making the quotient map $\phi :\mathscr J_\pi \to \widetilde{\mathscr J}_\pi$ equivariant. Indeed, if $(\phi_M, \phi_B) \in \mathrm{Ham}(\pi)$ and $J \sim J'$, then $J_B = J_B'$, so $\phi_B\circ J_B = \phi_B \circ J_B'$, and moreover the vertical components of $\phi_M \circ J_M$ and $\phi_M \circ J_M'$ will agree. Thus $(\phi_M \circ J_M, \phi_B \circ J_B)$ and $(\phi_M \circ J'_M, \phi_B \circ J'_B)$ map to the same point in $\widetilde{\mathscr J}_\pi$. As we have the notions of a tangent space, the various tensor bundles and a differential on $\widetilde{\mathscr J}_\pi$, it makes sense to ask for the $\mathrm{Ham}(\pi)$-action on $\widetilde{\mathscr J}_\pi$ to be Hamiltonian. All the terms in the expression in the statement of Theorem \ref{thm:cplxstrmomentmap} for $\mu_\pi(J)$ are independent of $J \in \phi^{-1}([J])$. Thus we can define $\mu_{\pi} : \widetilde{\mathscr J}_\pi \to \mathfrak{ham}(\pi)^*$ by its value on a representative in $\mathscr J_\pi$ of a given point in $\widetilde{\mathscr J}_\pi$. Unraveling the definitions, we see that the moment map property holds on $\widetilde{\mathscr J}_\pi$, since it does so on $\mathscr J_{\pi}$. Thus, one may view the equation of finding a zero of $\mu_{\pi}$ as a moment map equation with respect to a positive form, provided we pass to the space $\widetilde{\mathscr J}_\pi$.

With the definitions given above, we have shown the following:
\begin{corollary}
    The $\mathrm{Ham}(\pi)$-action on $\widetilde{\mathscr J}_\pi$ is Hamiltonian with moment map $\mu_{\pi}$ with respect to $\Omega_{\widetilde{\mathscr J}_\pi}$, which is a positive form on $\widetilde{\mathscr J}_\pi$.
\end{corollary}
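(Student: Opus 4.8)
The plan is to obtain the corollary by transporting the moment map identity of Theorem~\ref{thm:cplxstrmomentmap} from $\mathscr J_\pi$ down to $\widetilde{\mathscr J}_\pi$ along the quotient map $\phi$. Most of the structural ingredients are already assembled in the preceding discussion: the form $\Omega_{\widetilde{\mathscr J}_\pi}$ is well-defined on the model tangent bundle $W$ and satisfies $\phi^*\Omega_{\widetilde{\mathscr J}_\pi}=\Omega_{\mathscr J_\pi}$, the $\Ham(\pi)$-action descends so that $\phi$ is equivariant, and every term in the expression for $\mu_\pi(J)$ is independent of the representative $J\in\phi^{-1}([J])$, so that $\mu_\pi$ descends to a well-defined map on $\widetilde{\mathscr J}_\pi$. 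What remains is to verify the moment map identity $d\langle\mu_\pi,v\rangle=\iota_v\Omega_{\widetilde{\mathscr J}_\pi}$ for each $v\in\mathfrak{h}_\pi$, and to record the positivity of $\Omega_{\widetilde{\mathscr J}_\pi}$.

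For the identity, fix $v\in\mathfrak{h}_\pi$ and let $\hat F=\langle\mu_\pi,v\rangle$ denote the associated function on $\mathscr J_\pi$, which by construction is the pullback under $\phi$ of the descended function $F$ on $\widetilde{\mathscr J}_\pi$. By Theorem~\ref{thm:cplxstrmomentmap} we have $d\hat F=\iota_v\Omega_{\mathscr J_\pi}$ as $1$-forms on $\mathscr J_\pi$, and I would restrict this equation to the subbundle $W$. By the definition of the exterior derivative on $\widetilde{\mathscr J}_\pi$ as the $\Gamma(\Lambda^1 W)$-component of $d\hat F$, the left-hand side restricts to $dF$. For the right-hand side, the crucial observation is that $\Omega_{\mathscr J_\pi}$ pairs only the purely vertical and purely horizontal components of its arguments, so that the restriction of $\iota_v\Omega_{\mathscr J_\pi}$ to $W$ depends only on the $W$-component of the induced vector field of $v$; by equivariance of $\phi$ this component is precisely $\phi_* v$, namely the induced vector field of $v$ on $\widetilde{\mathscr J}_\pi$. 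Combined with $\Omega_{\mathscr J_\pi}|_W=\Omega_{\widetilde{\mathscr J}_\pi}$, this identifies the restriction with $\iota_v\Omega_{\widetilde{\mathscr J}_\pi}$, giving $dF=\iota_v\Omega_{\widetilde{\mathscr J}_\pi}$.

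For positivity, I would invoke the semipositivity of $\Omega_{\mathscr J_\pi}$ from Lemma~\ref{lem:Omegaclosed} and note that its degeneracy is confined exactly to the mixed endomorphism directions of the $M$-factor, which are precisely the directions removed in passing to $\widetilde{\mathscr J}_\pi$. Indeed, for a nonzero tangent vector in $W$ the two nonnegative summands $\omega_B(A_B,A_B)$ and $\omega_M|_V(A_M,A_M)$ in the defining integrand vanish simultaneously only when the tangent vector itself vanishes, so the restriction $\Omega_{\widetilde{\mathscr J}_\pi}$ is strictly positive.

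The main obstacle is the descent carried out in the second paragraph: one must ensure that restriction to $W$ is compatible simultaneously with the non-standard exterior derivative defined on $\widetilde{\mathscr J}_\pi$ and with contraction by the induced vector field. This rests on two points that should be isolated explicitly---first, that $\Omega_{\mathscr J_\pi}$ genuinely ignores mixed components, so that contraction and restriction to $W$ commute; and second, that the $\Ham(\pi)$-action preserves the vertical-and-base splitting, so that $\phi_*$ carries induced vector fields to induced vector fields. Once these are secured, the remaining verifications amount to routine bookkeeping with the definitions of Section~\ref{sec:formal}.
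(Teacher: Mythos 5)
Your proposal is correct and takes essentially the same approach as the paper: the paper's proof of this corollary consists precisely of the construction in Section \ref{sec:formal} followed by the remark that the moment map property descends from Theorem \ref{thm:cplxstrmomentmap} by ``unraveling the definitions,'' and your restriction-to-$W$ argument is exactly that unraveling, resting on the same two facts you isolate (that $\Omega_{\mathscr J_\pi}$ ignores mixed components, and that $\phi$ is $\Ham(\pi)$-equivariant so $\phi_*$ carries induced vector fields to induced vector fields). The only flaw is notational, in the positivity paragraph: since $\Omega_{\widetilde{\mathscr J}_\pi}$ is skew, the quantities $\omega_B(A_B,A_B)$ and $\omega_M|_V(A_M,A_M)$ vanish identically rather than being ``nonnegative summands''; positivity here means $\Omega_{\widetilde{\mathscr J}_\pi}(A,\mathbb{J}A)>0$ for $A\neq 0$, so the summands should be $\omega_B(A_B,J_B A_B)$ and $\omega_M|_V(A_M,J_M A_M)$, after which your observation that the degeneracy of $\Omega_{\mathscr J_\pi}$ is confined to the mixed directions removed in $W$ (together with the fact that, for $(A_M,A_B)\in W$, the horizontal part of $A_M$ vanishes exactly when $A_B$ does) completes the argument as intended.
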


\subsection{Zeroes of the moment map}

We next wish to characterise zeroes of the moment map $\mu_{\pi}$. The group $\mathrm{Ham}(\pi)$ fits into an exact sequence
\begin{align}
    \label{eq:ES}
    0 \to \mathrm{Ham}_0 (M, \omega) \to \mathrm{Ham}(\pi) \to \mathrm{Ham}(B, \omega_B),
\end{align}
where the Lie algebra $\fham_0(M,\omega)$ of $\mathrm{Ham}_0 (M, \omega)$ consists of the fibrewise Hamiltonian vector fields that are purely vertical, while the Lie algebra $\fham(B,\omega_{B})$ of $\mathrm{Ham}(B, \omega_B)$ consists of the Hamiltonian vector fields on $B$. Note that a Hamiltonian vector field on $B$ may not lift to $X$ as a Hamiltonian vector field. This would require the vector field to lift and to admit a Hamiltonian with respect to the form $\omega$.  So, the sequence may not be a short exact sequence.

Now, we can identify the Lie algebra $\fham_0(M,\omega)$ with the space $C^{\infty}_0(M)$ of functions of average $0$ on each fibre of $M \to B$. We do this by associating to such a vector field $v$ the function obtained by gluing together the fibrewise Hamiltonians $h_b$ of average $0$ for $v_b$ on $M_b$. Since $\mathrm{Ham}_0 (M, \omega)$ is a subgroup, this means that a zero of the moment map in particular satisfies 
\begin{eqnarray*}
\int_M h_{M} \left(S_{V}(\omega)-\hat S_{V} \right) \omega^m \wedge \beta^n  = 0
\end{eqnarray*}
for every $h_{M} \in C^{\infty}_0(M)$. This implies that $S_{V}(\omega)-\hat S_{V} =0$, i.e. at a zero of the moment map, $\omega$ is fibrewise cscK. This further implies that at a zero of the moment map, the moment map has expression
$$
\int_M h_B (\Lambda_{\beta}( \Ric(\beta) + \rho_H) - \hat S_{\pi}) \omega^m \wedge \beta^n.$$
Thus, at a zero of the moment map the above is zero for all $(h_{M},h_B) \in \fham(\pi)$, which means that the twisted scalar curvature $\Lambda_{\beta}( \Ric(\beta) + \rho_H)$ lies in the orthogonal complement  $\mathfrak{k}$ to the image of the map
$$
\fham(\pi) \to \fham(B,\omega_B).
$$
Thus a zero of the moment map is the pair of a fibrewise cscK metric on $M \to B$ together with a base metric whose twisted scalar curvature lies in $\mathfrak{k}$.
\begin{corollary}
    $J$ is a zero of the moment map $\mu_{\pi}$ if and only if $(J_M, \omega_M)$ is a fibrewise cscK metric and the twisted scalar curvature of $(J_B, \omega_B)$ lies in $\mathfrak{k}$.
\end{corollary}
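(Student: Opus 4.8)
The plan is to read off both implications directly from the explicit formula in Theorem \ref{thm:cplxstrmomentmap}, using the exact sequence \eqref{eq:ES}; indeed much of the work is already assembled in the discussion preceding the statement. For the forward direction, I would first restrict attention to the subalgebra $\fham_0(M,\omega) \subset \fham(\pi)$ of purely vertical Hamiltonians. For such $v = (v_M,0)$ the base Hamiltonian $h_B$ vanishes, so the second and third terms of $\langle \mu_\pi(J),v\rangle$ drop out and the moment map reduces to $\int_M h_M (S_V(\omega_M)-\hat S_V)\,\omega_M^m\wedge\omega_B^n$. Under the identification $\fham_0(M,\omega)\cong C^\infty_0(M)$ recorded above, $h_M$ ranges over all functions of fibrewise average zero. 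Since $S_V(\omega_M)-\hat S_V$ itself has fibrewise average zero it is an admissible test function, and taking $h_M = S_V(\omega_M)-\hat S_V$ converts the vanishing of the pairing into the vanishing of $\int_M (S_V(\omega_M)-\hat S_V)^2\,\omega_M^m\wedge\omega_B^n$, whence $S_V(\omega_M)=\hat S_V$ and $\omega_M$ is fibrewise cscK.

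With fibrewise cscK in hand, I would feed this back into the full formula: the first and third terms vanish identically, as each carries the factor $S_V(\omega_M)-\hat S_V$, leaving $\langle \mu_\pi(J),v\rangle = \int_M h_B(\Lambda_{\omega_B}(\Ric(\omega_B)+\rho_H)-\hat S_\pi)\,\omega_M^m\wedge\omega_B^n$ for every $v\in\fham(\pi)$. Since $h_B$ is basic I would carry out the fibre integral, rewriting this as an $L^2(\omega_B^n)$-pairing on $B$ between $h_B$ and the fibre-integrated twisted scalar curvature; its vanishing for all $h_B$ in the image of $\fham(\pi)\to\fham(B,\omega_B)$ is precisely the assertion that the twisted scalar curvature lies in the orthogonal complement $\mathfrak{k}$ of that image. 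The converse requires no new work: if $\omega_M$ is fibrewise cscK then the first and third terms vanish for every $v$, and if moreover the twisted scalar curvature lies in $\mathfrak{k}$ then the surviving middle term pairs to zero against every admissible $h_B$, so $\mu_\pi(J)=0$.

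The argument is essentially bookkeeping once Theorem \ref{thm:cplxstrmomentmap} is available, so I do not anticipate a serious obstacle; the points meriting care are the following. First, the justification that $S_V(\omega_M)-\hat S_V$ is a legitimate test Hamiltonian rests on the identification $\fham_0(M,\omega)\cong C^\infty_0(M)$ together with the fibrewise average-zero normalisation built into $\hat S_V$. Second, the passage from an integral over $M$ to an orthogonality statement over $B$ uses crucially that $h_B$ is constant along the fibres, so that it commutes past the fibre integral; here the constant $\hat S_\pi$, being the average of the twisted scalar curvature, renders the middle integrand $L^2$-orthogonal to the constants, so the pairing descends to Hamiltonians modulo constants and the characterisation is well-posed. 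Finally, since \eqref{eq:ES} need not be right-exact, I would keep explicit that $\mathfrak{k}$ is the orthogonal complement of the \emph{actual} image of $\fham(\pi)$ in $\fham(B,\omega_B)$, rather than of all of $\fham(B,\omega_B)$; maintaining this distinction is exactly what makes the orthogonality characterisation correct.
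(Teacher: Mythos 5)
Your proposal is correct and follows essentially the same route as the paper: restrict to the vertical subalgebra $\fham_0(M,\omega)\cong C^\infty_0(M)$ to force $S_V(\omega_M)=\hat S_V$, then observe that the surviving middle term gives $L^2$-orthogonality of the twisted scalar curvature to the image of $\fham(\pi)\to\fham(B,\omega_B)$, with the converse immediate. You are in fact slightly more explicit than the paper in two places --- testing against $h_M=S_V(\omega_M)-\hat S_V$ to justify the fibrewise cscK conclusion, and carrying out the fibre integral so the orthogonality statement lives on $B$ --- but these only fill in steps the paper leaves implicit.
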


\begin{remark}
    If $X \to B$ admits a fibrewise cscK metric, we can view the equation as an equation for $\omega_B$. The fibrewise cscK metric may not be unique, but the equation for the base metric is in fact  independent of the choice of fibrewise cscK metric, as the Weil--Petersson forms associated to two relatively cscK metrics agree, see \cite[Corollary 4.3]{DervanNaumann}.
\end{remark}

\section{Foliations}\label{sec:foliations}

We now generalise the above to the case of foliations. 
Let $\cF \subset TX$ be a foliation, $\omega$ a leafwise symplectic form and $\beta$ a transversal symplectic form. We will define a space $\mathscr J^{\Int}_{\cF}$ of integrable almost complex structures compatible with the foliation $\cF$ and endow it with a K\"ahler structure so that the coupled quantity involving the leafwise scalar curvature of $\omega$ and the twisted scalar curvature of $\beta$ becomes a moment map with respect to a certain group action. 

The approach mirrors that of the submersion case closely and is in particular based on an adiabatic argument. There are several reasons why the case of foliations is treated separately from the case of submersions. First, the spaces, groups and so on that are considered in this section have more concrete interpretations in the submersion case.  Second, the twisted scalar curvature equation becomes an equation on $X$ in the foliation case, whereas it is an equation on the base $B$ when we have a foliation, see Corollary \ref{cor:zeroofmomentmap}. Finally, in the submersion case, we have an interpretation of having a zero of the moment map in terms of the twisted cscK equation involving the Weil--Petersson form as twist. This comes from the fact when there is a fibrewise cscK metric, the submersion induces a map from the base $B$ to the moduli space of cscK manifolds, and the twisted equation on the base then has the Weil--Petersson form pulled back via this map as its twist. 

We begin with the definitions of the relevant space of almost complex structures. Fix a compact symplectic manifold $(M,\omega)$, a foliation $\cF\subset TM$ of the (smooth) tangent bundle of $M$, and a transverse closed two-form $\beta$.
\begin{definition}\label{pi-var2}
We define the space of \emph{integrable almost complex structures} on $\cF$ to be 
\begin{align*}
\mathscr J^{\Int}_{\cF} =\left\{J\in\mathscr J^{\Int}(M,\omega) :
\cF \text{ is $J$-holomorphic and }  J^*\beta=\beta\right\}.
\end{align*}
\end{definition}
 
In particular, $J$ sends $\cF$ to itself, which means its $\textnormal{Hom}(\cF, H)$-component vanishes. Moreover, as a consequence, one has
$$
\mathscr J^{\Int}_{\cF} \subset \mathscr J(M,\omega+k\beta),
$$
just as in Lemma \ref{lem:embeddings}. Note that $\mathscr J^{\Int}_{\cF}$ depends on both $\omega$ and $\beta$, even though this is not reflected in the notation. 

A foliation is locally a holomorphic submersion. We employ a metric enhancement of this classical fact, for almost complex structures $J \in \mathscr J^{\Int}_{\cF}$, so that $J$ preserves $\beta\in \Omega^{2}_H(M)$, which is hence a $(1,1)$-form on the complex manifold $(M,J)$.   Choose a holomorphic neighbourhood $U$ around any point of $X$ with coordinates $x= (y,z) \in \mathbb{C}^{m+n}$, such that leaves of $\cF$ are precisely the fibres of the local submersion $x \mapsto z$. Then, since $d\beta = 0$, the form $\beta$ is locally the pullback of a K\"ahler metric from the base of the local submersion. Thus, for each $J \in J^{\Int}_{\cF}$, the induced metrised foliation is locally a metrised holomorphic submersion.  This being a local claim, we illustrate it with the following example, with the general case being identical.

\begin{example}
Let $\beta\in \Omega^{1,1}(\mathbb{C}^2)$ be closed, and horizontal with respect to the submersion $\mathbb{C}^2\to\mathbb{C}$ given by $(z_1,z_2) \mapsto z_2$. We may write 
$$
\beta=f_{1\bar 1}dz_1\wedge d\bar z_1+f_{1\bar 2} dz_1\wedge d\bar z_2 + f_{2\bar 1}dz_2\wedge d\bar z_1+f_{2\bar 2}dz_2\wedge d\bar z_2
$$
The horizontal condition is $ \iota_{\partial_{z_1}} \beta=\iota_{\partial_{\bar z_1}} \beta=0$, which implies that  $\beta=f_{2\bar 2}dz_2\wedge d\bar z_2$. Moreover, $d\beta=0$ implies that $f_{2\bar 2}$ depends only on the $z_2$-coordinate, as the derivatives in the $z_1$ and $\bar z_1$-directions then vanish. Thus, $\beta$ is the pullback of the induced form on $\mathbb{C}$ via the submersion $\mathbb{C}^2 \to \mathbb{C}$.

\end{example}

Next, we define the relevant gauge group that acts on $\mathscr J^{\Int}_{\cF}$. We first define a subgroup of $\mathrm{Ham}(X,\omega)$, which does not depend on $\beta$, as follows.  First set
\begin{align*}
\mathrm{Ham}(X,\omega,\cF):=\left\{ \phi\in\mathrm{Ham}(X,\omega) \left |  \phi^\ast \cF= \cF\right.\right\},
\end{align*}
where $\phi^\ast \cF\subset TX$ is regarded as the pullback subbundle of $\cF\subset TX$, meaning we have a diagram
$$
\xymatrix{
\phi^\ast\cF\ar@{>}[rr]^{} \ar@{>}[d]^{}& & \cF  \ar@{>}[d]^{} \\
 	X\ar@{>}[rr]^{\phi}   & & X.  }
$$
The group of Hamiltonian symplectomorphisms of $\cF$ will be a subgroup of this group.
\begin{definition} We define the group of \emph{Hamiltonian symplectomorphisms} $\Ham(\cF)$ of $(X,\cF,\omega,\beta)$ to be the elements of $\mathrm{Ham}(X,\omega,\cF)$ that also preserve $\beta$, i.e.
$$
\mathrm{Ham}(\cF)=\left\{ \phi\in\mathrm{Ham}(X,\omega,\cF) \left |  \phi^\ast\beta=\beta\right.\right\}.
$$
\end{definition}
Note that this depends on both $\omega$ and $\beta$, but as with $\mathrm{Ham}(\pi)$ in the case of submersions we omit this from the notation. The Lie algebra of $\mathrm{Ham}(\cF)$ is given by
$$
\mathfrak{h}_{\cF}=\left\{ v \in\mathfrak{ham}(X,\omega,\cF) \left |  \iota_{v}\beta=df \text{ for some }f\in C^\infty_H(X) \right.\right\}.
$$
\begin{remark}
    Note that in the case when the foliation arises from a holomorphic submersion $\pi: X\to B$ and $\beta=\pi^\ast \omega_B$, the Hamiltonian $f=\pi^*\tilde f$ for $v$ with respect to $\beta$ is pulled back from $B$. The Lie algebra $\mathfrak{h}_{\cF}$ is then given by
$$
\left\{ v \in \fham(X,\omega,\cF) : \pi^* \iota_{\pi_* v} \omega_B = \iota_{v}\beta = \pi^* d \tilde f \text{ for some } \tilde f\in C^\infty(B) \right\}.
$$
So in the case of a submersion $\mathfrak{h}_{\cF}$ agrees with $\mathfrak{h}_{\pi}$.
\end{remark}

We can view $\mathscr J^{\Int}_{\cF}$ as a symplectic manifold by expanding the restriction of the symplectic form of $\mathscr J^{\Int} (X,\omega_k)$ to $\mathscr J^{\Int}_{\cF}$ asymptotically in $k$. The leading order term in this expansion is of order $k^n$ and is given by

$$
\Omega_{\mathscr J_\cF}(A^1, A^2) 
=\int_X\left(\beta(A^1, A^2)+ \omega_{\cF}(A^1, A^2)\right)\omega^{m}\wedge \beta^n,
$$
for $A^j \in T_{(J_M, J_B)}\mathscr J^{\Int}_{\cF}$. As $\Omega_{\mathscr J_\cF}$ arises from an asymptotic expansion of a sequence K\"ahler forms, this form is then  closed and semi-positive on $\mathscr J^{\Int}_{\cF}$ just as was the case in Lemma \ref{lem:Omegaclosed}, but again it is not in general positive.

As a consequence of the calculation following Definition \ref{def:transversefutaki}, we obtain the following, with the analogous notation:
\begin{theorem}
Let $v \in \mathfrak h_{\cF}$, and let $h_{\cF}$ be the Hamiltonian for $v$ with respect to $\omega$ and $h$ be the Hamiltonian with respect to $\beta$, as in Equation \eqref{h_k}. A moment map for the action of $\Ham(\mathcal F)$ on $\left(\mathscr{J}^{\Int}_{\cF},\Omega_{\mathscr J_{\cF}}\right)$ is given by the map $\mu_{\cF}: \mathscr{J}^{\Int}_{\cF} \to \mathfrak{h}_{\cF}^\ast$ defined by
\begin{align*}
\langle\mu_{\cF}(J),v\rangle =& \int_X h_{\cF} \left(S_{\cF}(\omega)-\hat S_{\cF} \right) \omega^m \wedge \beta^n + \int_X h (\Lambda_{\beta}( \Ric(\beta) + \rho) - \lambda) \omega^m \wedge \beta^n \\
&+ \frac{n}{m+1}\int_X h (S_{\cF}(\omega) -\hat S_{\cF}) \omega^{m+1} \wedge \beta^{n-1}.
\end{align*}
\end{theorem}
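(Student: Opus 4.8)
The plan is to follow the proof of Theorem~\ref{thm:cplxstrmomentmap} in the submersion case almost verbatim, replacing the embeddings $\sigma_k$ there by the inclusions $\mathscr J^{\Int}_{\cF}\subset \mathscr J^{\Int}(X,\omega_k)$ with $\omega_k=\omega+k\beta$ noted after Definition~\ref{pi-var2}, and deducing the moment map property from the term-by-term behaviour of an adiabatic expansion in $k$. Indeed, the entire asymptotic computation has already been carried out in Equation~\eqref{Fut-expan}, so the only genuinely new content is the moment map bookkeeping.

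First I would verify that $\Ham(\cF)$ is a subgroup of $\Ham(X,\omega_k)$ for all $k\gg 0$. An element $v\in\mathfrak h_{\cF}$ is Hamiltonian for $\omega$ and satisfies $\iota_{v}\beta=df$ with $f\in C^{\infty}_H(X)$; thus if $\iota_{v}\omega=dg$ then $\iota_{v}\omega_k=d(g+kf)$, exhibiting $v$ as Hamiltonian for $\omega_k$ with potential $h_k=g+kf$. This is exactly the decomposition $h_k=kh+h_{\cF}$ of Equation~\eqref{h_k}, with $h=f$ transverse and $h_{\cF}=g$ leafwise. Consequently $\mathscr J^{\Int}_{\cF}$ is a $\Ham(\cF)$-invariant subset of $\mathscr J^{\Int}(X,\omega_k)$, and the Donaldson--Fujiki moment map $\mu_k$ of Theorem~\ref{fujiki-donaldson}, restricted to this subset and paired only with vector fields in $\mathfrak h_{\cF}$, is a moment map for the $\Ham(\cF)$-action on $\mathscr J^{\Int}_{\cF}$ equipped with the pulled-back K\"ahler form $\Omega_k$.

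Next, exactly as in Lemma~\ref{lem:Omegaclosed}, I would expand $\Omega_k$ in powers of $k$. Since any $J\in\mathscr J^{\Int}_{\cF}$ has vanishing $\mathrm{Hom}(\cF,H)$-component there is no $O(k)$-term, and one obtains $\Omega_k=\binom{m+n}{n}k^n\,\Omega_{\mathscr J_{\cF}}+O(k^{n-1})$. In parallel, for $v\in\mathfrak h_{\cF}$ I would expand
$$\langle\mu_k(J),v\rangle=\int_X h_k\big(S(\omega_k)-\hat S_k\big)\omega_k^{m+n}$$
in powers of $k$. This is precisely the expansion performed in Equation~\eqref{Fut-expan}: substituting $h_k=kh+h_{\cF}$, the scalar-curvature expansion of the Lemma following Definition~\ref{def:transversefutaki}, and the volume expansion of $\omega_k^{m+n}$, the order-$k^n$ term of $\binom{m+n}{n}^{-1}\langle\mu_k(J),v\rangle$ is seen to equal the claimed expression for $\langle\mu_{\cF}(J),v\rangle$. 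Here one uses that $\int_X h\,\Delta_V(\Lambda_{\beta}\omega)\,\omega^m\wedge\beta^n=0$, which holds because $h$ is transverse (so $\Delta_V h=0$) and $\Delta_V$ is self-adjoint with respect to $\omega^m\wedge\beta^n$.

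Finally, the moment map identity $d\langle\mu_k(J),v\rangle=\iota_{v}\Omega_k$ holds for every $k$; matching the order-$k^n$ terms on both sides via the two expansions above yields $d\langle\mu_{\cF}(J),v\rangle=\iota_{v}\Omega_{\mathscr J_{\cF}}$, which is the desired identity, while equivariance of $\mu_{\cF}$ follows from that of each $\mu_k$. I expect the main obstacle to be the justification that the moment map property really does pass to each individual term of the asymptotic expansion. This is the foliation analogue of the argument invoked through \cite[Theorem~3.12]{LSW2022}, and it requires that both $\mu_k$ and $\Omega_k$ admit genuine asymptotic expansions in $k$ along $\mathscr J^{\Int}_{\cF}$. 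The pointwise expansions here rely on the fact established after Definition~\ref{pi-var2} that a metrised foliation is locally a metrised holomorphic submersion, so that Fine's submersion computation applies leafwise; crucially, the vanishing of the leading purely leafwise term and of the vertical-Laplacian term cannot be read off from a global Fubini argument and must instead be handled through the transverse Hodge-theoretic reasoning of Lemma~\ref{h-0}.
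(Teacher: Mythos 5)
Your proposal is correct and follows essentially the same route as the paper: the paper's proof is precisely the adiabatic argument of Theorem \ref{thm:cplxstrmomentmap} run along the inclusions $\mathscr J^{\Int}_{\cF}\subset\mathscr J^{\Int}(X,\omega+k\beta)$, using the decomposition $h_k=kh+h_{\cF}$ of Equation \eqref{h_k}, the expansion already carried out in Equation \eqref{Fut-expan}, and the transverse Hodge-theoretic vanishing of Lemma \ref{h-0} in place of any Fubini-type argument. You have in fact supplied more detail than the paper does (it disposes of the proof in one sentence), and you correctly isolate the two genuine subtleties: that the moment map property passes to individual terms of the expansion, and that the leading-order and vertical-Laplacian terms vanish for foliation-theoretic rather than fibration-theoretic reasons.
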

This follows in exactly the same way as Theorem \ref{thm:cplxstrmomentmap}, using the sequence of K\"ahler metrics $\omega + k \beta$ on $X$ and the corresponding expansion of the associated moment map on the space of almost complex structures on $X$. Note that we may further also define a formal quotient space $\widetilde{\mathscr J}_{\cF}$ on which we obtain a positive symplectic form for which $\mu$ is a moment map, as in Section \ref{sec:formal}. We note in addition that it is straightforward to see that this result generalises the corresponding result for submersions.

Finally, a zero of the moment map $\mu_{\cF}$ has a similar interpretation in terms of the twisted scalar curvature.  We have a subgroup $\mathrm{Ham}_0 (M, \omega_{\cF}) \subset \mathrm{Ham}(\cF)$ generated by the vector fields that are sections of $\cF \subset TX$.  We can then define a space $\mfk$ which is the cokernel of the map from $\mathfrak{h}_{\cF}$ to the horizontal Hamiltonian functions with respect to $\beta$. The vanishing of the Futaki invariant then in particular implies the vanishing of 
$$
\int_X h_{\cF} \left(S_{\cF}(\omega)-\hat S_{\cF} \right) \omega^m \wedge \beta^n
$$
for all $h_{\cF} \in \mathfrak h_{\cF}$. As this equals the integral of the inner product of the leafwise gradients of $h_{\cF}$ and $S_{\cF}(\omega)$ over $M$ (with  volume form $\omega^m \wedge \beta^n$), this forces the leafwise gradient $\nabla_{\cF} S_{\cF}(\omega)$ of the leafwise scalar curvature to vanish if there is a zero of the moment map, and so $S_{\cF}(\omega)$ must be constant as it has no transversal component. This in turn implies that at a zero of the moment map, the twisted transversal scalar curvature must land in $\mfk$.
\begin{corollary}\label{cor:zeroofmomentmap}
    $J$ is a zero of the moment map $\mu_{\cF}$ if and only if $(J, \omega_{\cF})$ is a leafwise cscK metric and the twisted scalar curvature of $(J, \beta)$ lies in $\mathfrak{k}$.
\end{corollary}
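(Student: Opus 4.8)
The plan is to prove both implications of Corollary \ref{cor:zeroofmomentmap} by exploiting the filtration of $\mathrm{Ham}(\cF)$ by the subgroup $\mathrm{Ham}_0(M,\omega_{\cF})$, whose Lie algebra consists of the leafwise Hamiltonian vector fields, i.e.\ the sections of $\cF$. The key structural observation is that for such a $v$ one has $\iota_v\beta=0$, since $\beta$ is transverse and $v$ is tangent to the leaves; hence the transverse Hamiltonian $h$ of $v$ is constant and may be taken to vanish, so that the moment map formula of the preceding theorem collapses to its first term,
$$\langle \mu_{\cF}(J),v\rangle=\int_X h_{\cF}\left(S_{\cF}(\omega)-\hat S_{\cF}\right)\omega^m\wedge\beta^n,$$
where now $h_{\cF}$ ranges over all functions of leafwise average zero. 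This reduces the first half of the statement to the leafwise analogue of the classical Donaldson--Fujiki characterisation of zeroes.

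For the forward implication I would first extract the leafwise condition. Testing the displayed pairing against the leafwise Hamiltonian $h_{\cF}=S_{\cF}(\omega)-\bar S_{\cF}$, where $\bar S_{\cF}$ denotes the leafwise average of $S_{\cF}(\omega)$ (a transverse function), and using that the transverse part $\bar S_{\cF}-\hat S_{\cF}$ pairs trivially with any leafwise-average-zero function, one obtains $\int_X (S_{\cF}(\omega)-\bar S_{\cF})^2\,\omega^m\wedge\beta^n=0$, forcing $\nabla_{\cF}S_{\cF}(\omega)=0$; this is the gradient reformulation used in the text. Thus $S_{\cF}(\omega)$ is leafwise constant and $(J,\omega_{\cF})$ is leafwise cscK. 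Upgrading this to the global identity $S_{\cF}(\omega)=\hat S_{\cF}$ (see below), both the first and the third terms of the moment map, which each carry the factor $S_{\cF}(\omega)-\hat S_{\cF}$, vanish identically for every $v\in\mathfrak h_{\cF}$. The moment map therefore reduces to its single transverse term $\int_X h\,(\Lambda_{\beta}(\Ric\beta+\rho)-\lambda)\,\omega^m\wedge\beta^n$, and since this is an $L^2$-pairing of the transverse Hamiltonian $h$ against the transverse quantity $\Lambda_{\beta}(\Ric\beta+\rho)-\lambda$, the vanishing of $\langle\mu_{\cF}(J),v\rangle$ for all $v$ is precisely the assertion that this twisted transverse scalar curvature is orthogonal to the image of $\mathfrak h_{\cF}\to C^{\infty}_H(X)$, i.e.\ lies in $\mathfrak k$. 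The reverse implication is then immediate: leafwise cscK kills the first and third terms for every $v$, and membership in $\mathfrak k$ kills the surviving transverse term, so $\mu_{\cF}(J)=0$.

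The main obstacle is exactly the decoupling of the third term $\frac{n}{m+1}\int_X h\,(S_{\cF}(\omega)-\hat S_{\cF})\,\omega^{m+1}\wedge\beta^{n-1}$ once leafwise cscK is established. Leafwise constancy of $S_{\cF}(\omega)$ only gives that $S_{\cF}(\omega)$ equals the topological leafwise average $\hat S_b$ on each leaf, and since $h$ is transverse this term genuinely survives unless $\hat S_b$ is independent of the leaf, so that $S_{\cF}(\omega)=\hat S_{\cF}$ globally; this leaf-independence is the content of the remark that $S_{\cF}(\omega)$ has no transversal component, and it holds automatically in the submersion setting (where it is the cohomological invariance of $\hat S_b$) and in the compact-leaf setting considered here. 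The only other point requiring care is the precise identification of $\mathfrak k$ as the cokernel of $\mathfrak h_{\cF}\to C^{\infty}_H(X)$, realised via the $L^2$-inner product of $\omega^m\wedge\beta^n$ as the orthogonal complement to the image, so that the vanishing of the transverse pairing for all admissible $h$ is literally the statement that the twisted scalar curvature lies in $\mathfrak k$.
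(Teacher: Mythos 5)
Your overall route is the same as the paper's: restrict to the subgroup $\mathrm{Ham}_0$ of leafwise Hamiltonian vector fields (for which the transverse Hamiltonian $h$ vanishes and only the first term of $\mu_{\cF}$ survives), deduce that $S_{\cF}(\omega)$ is leafwise constant, and then identify the vanishing of the remaining transverse term with membership of the twisted scalar curvature in $\mfk$, realised as the $L^2(\omega^m\wedge\beta^n)$-orthogonal complement of the image of $\mathfrak{h}_{\cF}\to C^{\infty}_H(X)$. Both implications are organised exactly as in the text.

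There is, however, a genuine gap in the step you yourself flag as the main obstacle. You justify the passage from ``leafwise constant'' to the global identity $S_{\cF}(\omega)=\hat S_{\cF}$ by leaf-independence of the topological average, claiming it holds ``in the compact-leaf setting considered here.'' But the corollary is stated for the paper's general foliation setting, where leaves are explicitly \emph{not} assumed compact; in that generality the objects your argument relies on --- the leafwise average $\bar S_{\cF}$, the identification of $\mathrm{Lie}(\mathrm{Ham}_0)$ with leafwise-average-zero functions, and the Fubini-type vanishing of $\int_X h_{\cF}(\bar S_{\cF}-\hat S_{\cF})\,\omega^m\wedge\beta^n$ --- are not even defined, and a leafwise-constant function can genuinely vary transversally (think of a product foliation), so leaf-independence must be proven rather than observed. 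The bridge the paper has available is Lemma \ref{h-0}: its proof uses only that the test function is transverse, via el Kacimi's transverse Hodge decomposition $C^{\infty}_H=\ker\Delta_{\beta}\oplus\mathrm{im}\,\Delta_{\beta}$ \cite{elKacimi-transverse} and Proposition \ref{adjCH}, so in fact
$$\int_X f\,\bigl(S_{\cF}(\omega)-\hat S_{\cF}\bigr)\,\omega^m\wedge\beta^n=0 \quad\text{for every transverse } f,$$
with no hypothesis on $\omega$ or on $J$. Once the subgroup argument shows $S_{\cF}(\omega)-\hat S_{\cF}$ is itself transverse, taking $f=S_{\cF}(\omega)-\hat S_{\cF}$ gives $\int_X(S_{\cF}(\omega)-\hat S_{\cF})^2\,\omega^m\wedge\beta^n=0$, i.e.\ global constancy; this is what kills the third term for all $v$ and is the content of the paper's terse clause that $S_{\cF}(\omega)$ ``has no transversal component.'' The same device removes your other reliance on compact leaves: replace the test function $S_{\cF}(\omega)-\bar S_{\cF}$ by $S_{\cF}(\omega)-\hat S_{\cF}$ (or use the paper's leafwise-gradient formulation), since the ambiguity of the leafwise Hamiltonian by transverse functions pairs to zero against $S_{\cF}(\omega)-\hat S_{\cF}$ by the identity above.
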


Note that the resulting condition on $(J,\beta)$ is an equation on the total space of the foliation, in contrast to the setting of submersions, where the condition is one formulated on the base $B$. By analogy, the condition that the twisted scalar curvature of $(J, \beta)$ lies in $\mathfrak{k}$ can be viewed as a basic condition.

\bibliography{twisted}{}
\bibliographystyle{habbrv}

\end{document}